\documentclass[preprint,9pt,authoryear]{elsarticle}

\usepackage{amssymb}
\usepackage{amsthm}
\usepackage{graphics}
\usepackage{setspace}
\usepackage{amsmath}
\usepackage{mathrsfs}
\usepackage{textcomp}
\usepackage{wasysym}
\usepackage{helvet}
\usepackage{enumerate} 
\usepackage{pifont}
\usepackage{mathtools}
\usepackage{color}
\usepackage[table]{xcolor}
\renewcommand{\geq}{\geqslant}		

\newtheorem{theorem}{Theorem}[section]			
\newtheorem{corollary}[theorem]{Corollary}		
\newtheorem{lemma}[theorem]{Lemma}			

\theoremstyle{definition}		

\newtheorem{example}{Example}[section]

\def\eref#1{$(\ref{#1})$}				

\def\egref#1{Example~$\ref{#1}$}		
\def\lref#1{Lemma~$\ref{#1}$}		
\def\tref#1{Theorem~$\ref{#1}$}		
\def\cref#1{Corollary~$\ref{#1}$}		

\usepackage{lineno}

\journal{Finite Fields and Applications}

\begin{document}

\begin{frontmatter}

\title{Permutation Polynomials of $\mathbb{F}_{q^2}$ : A Linear Algebraic Approach }

\author[label1]{Megha M. Kolhekar}
\author[label2]{Harish K. Pillai}
\address[label1]{meghakolhekar@ee.iitb.ac.in}
\address[label2]{hp@ee.iitb.ac.in}
\address{Department of Electrical Engineering, IIT Bombay, INDIA}

\begin{abstract}{
In this paper, we present a linear algebraic  approach to the study of permutation polynomials that arise from linear maps over a finite field $\mathbb{F}_{q^2}$. We study a particular class of permutation polynomials over $\mathbb{F}_{q^2}$, in the context of rank deficient and full rank linear maps over $\mathbb{F}_{q^2}$. We derive necessary and sufficient conditions under which the given class of polynomials are permutation polynomials. We further show that the number of such permutation polynomials can be easily enumerated. Only a subset of these permutation polynomials have been reported in literature earlier. It turns out that this class of permutation polynomials have compositional inverses of the same kind and we provide algorithms to evaluate the compositional inverses of most of these permutation polynomials. }
\end{abstract}

\begin{keyword}
Finite Fields \sep Linear Maps  \sep Permutation Polynomials
\end{keyword}

\end{frontmatter}

\section{Permutation Polynomials of Finite Fields}
Let $q$ be a prime power, $q=p^r$ and $\mathbb{F}_q$ be the finite field of cardinality $q$. A good introduction to the theory of finite fields is available in [\cite{Lidl:1986:IFF}]. Permutation polynomials are a topic of interest due to their applications in coding theory, cryptography, combinatorics and other engineering areas. A permutation polynomial (PP) of $\mathbb{F}_q$ is a bijection on $\mathbb{F}_q$ through the evaluation map. The relevant problems in the area concerning permutation polynomials of finite fields are listed in the seminal paper  [\cite{Lidl:1988:whendoes}] and there have been several attempts to solve these problems. Finding new classes of PPs is a challenging problem, especially classes which can be parameterized and are easy to construct. In this paper, we present a very large class of PPs that are easy to construct, simple to verify as being PPs, easy to enumerate and in addition this class of PPs is also closed under compositional inverses. We begin by discussing some background and literature about permutation polynomials that is relevant to the work presented in this paper.\\

The connection between linearized polynomials and permutation polynomials has been explored way back in [\cite{evans1992}]. It turns out that all linearized polynomials that have full rank (when viewed as a linear map) are permutation polynomials.
One of the open problems stated in [\cite{Lidl:1988:whendoes}] is to find new classes of permutation polynomials.  There are several papers in literature that address this problem. One such paper is [\cite{2008arXiv0810.2830Z}], that discusses four constructions of permutation polynomials which are generalizations of constructions given in [\cite{MARCOS2011105}]. One of the most popular methods used for identifying new classes of PPs is through the application of AGW criterion. This criterion was developed by  building on material in [\cite{2008arXiv0810.2830Z}] and appeared in [\cite{AKBARY201151}]. The AGW criterion gives necessary and sufficient conditions for checking permutation polynomials, by reducing the problem to checking for permutation on a smaller set. The maps required to reduce the problem, needs to be built in a clever fashion. The authors construct several classes of permutation polynomials -- for example in Theorem $5.12$ in [\cite{AKBARY201151}], they construct two classes of permutation polynomials of $\mathbb{F}_{q^2}$ which are of the form (a) $f_{a,b,k}(x) = ax^q + bx + (x^q-x)^k$, where $a,b \in \mathbb{F}_q$ such that $a \not = \pm b$ and all positive even integers $k$ (b) $f_{a,k}(x) = ax^q + ax + (x^q-x)^k$, where $a \in \mathbb{F}_q^*$ and $p,k$ are odd and in addition $k$ is relatively prime to $q-1$. \\
In [\cite{Yuan2008PermutationPO}], the authors establish relationship between `Kloosterman polynomials' and permutation polynomials and provide some classes of the form $(x^p-x+\delta)^s +L(x)$, over fields of characteristic $2$ and $3$. The paper also provides some experimental results on the values of $s$ and $\delta$ for specific $m$ over $\mathbb{F}_{2^m}$ and $\mathbb{F}_{3^m}$. In [\cite{YUAN2011560}], the authors refer to the AGW criteria as `a powerful lemma' and provide generalizations using `$q$-polynomials' over $\mathbb{F}_q$ to construct permutation polynomials of $\mathbb{F}_{q^n}$. Their main result is about a class of permutation polynomials over $\mathbb{F}_{q^n}$ involving general polynomials of $\mathbb{F}_{q^n}[x]$ and linearized polynomials over $\mathbb{F}_q$. Necessary and sufficient conditions are provided for $f(x)$ of the given class to be a permutation polynomial of $\mathbb{F}_{q^n}$ in terms of some other polynomial being able to permute $\mathbb{F}_q$ and a system of equations having a common solution to be zero only. Theorem $6.4$ in [\cite{YUAN2011560}] is a generalized version of Theorem $5.12$ mentioned above from [\cite{AKBARY201151}], dealing with cases, that were not described earlier.\\
A lot of attention has centered around the class of permutation polynomials of the type $(x^q-x+\delta)^s+ax^q+bx$. In [\cite{ZHA2012781}], yet another approach to this class of permutation polynomials is seen. Specifically, the search experiments in [\cite{Yuan2008PermutationPO}] are provided with concrete proofs in [\cite{ZHA2012781}] for the class given by $f(x)=(x^{p^k}-x+\delta)^s+L(x)$. 
The results in [\cite{ZHA2012781}] were refined further in [\cite{YUAN201488}]. Authors Hellseth et.al. especially refine the theorem about the class $f(x)=(x^{p^k}-x+\delta)^{\frac{p^n+1}{2}}+x^{p^k}+x$ for any $\delta \in \mathbb{F}_{p^n}$. Some new classes of the form $(x^{p^k}-x+\delta)^s+x$ were obtained for specific values of the index $s$. \\
A significant reference thereafter is [\cite{Zheng:2016:LCP:2995737.2995773}], in which permutation polynomials over $\mathbb{F}_{q^2}$ with a `more general form' given by $f(x)=(ax^q+bx+c)^2+\phi((ax^q+bx+c)^{(q^2-1)/d})+ux^q+vx$ are discussed. The authors use the AGW criterion extensively to prove their results and unify some of the known permutation polynomial  classes. Several cases of $\phi(x)$ are also discussed. 
All the classes that are derived in [\cite{Zheng:2016:LCP:2995737.2995773}] are claimed to be `large'. \\
Recently, the permutation polynomials of the form $f(x)=(x^q-x+\delta)^s+ cx$ have taken center stage in papers like [\cite{ZHENG20191}]. This class is derived over $\mathbb{F}_{q^2}$ without any restriction on the parameter $\delta$. Permutation trinomials of the form $x^{qs}-x^s+cx$ over $\mathbb{F}_{q^2}$ are proposed. In [\cite{https://doi.org/10.48550/arxiv.2204.01545}], authors propose a generic construction of permutation polynomials, especially for permutation binomials and trinomials, which satisfy properties of some well known classes like $X^rh(X)^{q-1}$, over $\mathbb{F}_{q^2}$. The proposed algorithm claims to provide control over the coefficients of the permutation polynomial.\\ 
In this paper, we study normalized PPs of $\mathbb{F}_{q^2}$ of the form $f(x) = g(s) + L(x)$, where $s$ is a rank $1$ linearized polynomial, $g \in \mathbb{F}_{q^2}[x]$ (the polynomial ring with coefficients from $\mathbb{F}_{q^2}$) and $L(x)$ is a linearized polynomial. We present an approach which is linear algebraic in nature thereby differing from the other approaches available in existing literature.  We claim that our approach is more general and is arguably inclusive of most permutation polynomial sub-classes presented in papers that deal with PPs of this particular shape. 
The novelty in the approach is in the ability to deduce much more information about these PPs as compared to the other methods in existing literature about these PPs. This includes ways of  counting the number of PPs,  apart from information about the compositional inverses of these PPs over $\mathbb{F}_{q^2}$. 
\section{Preliminaries}
In this paper we consider `normalized' PPs, meaning monic permutation polynomials which map $0$ to $0$ through their evaluation. Note that if $f(x)$ is a normalized PP, then $\alpha f(x) +\beta$ is also a PP for $\alpha \in \mathbb{F}_q^*$ and $\beta \in \mathbb{F}_q$. Thus, by showing some property of a normalized PP, we are intrinsically providing information about a collection of $q(q-1)$ permutation polynomials. We shall often refer to these other PPs of the form $\alpha f(x) + \beta$ as PPs in the family associated to the normalized PP $f(x)$. One place where this introduction of normalized PPs turn out to be useful is for counting the number of PPs -- if the count of normalized PPs is known, one can deduce that the actual count of PPs  is $q(q-1)$ times the number of normalized PPs.  \\
$\mathbb{F}_{q^r}$ is a $r$-dimensional vector space over $\mathbb{F}_q$. In addition, there is a multiplicative group structure in $\mathbb{F}_{q^r}^*$ (the nonzero elements of $\mathbb{F}_{q^r}$). The natural addition of two elements in the field  $\mathbb{F}_{q^r}$ coincides with the vector addition of two vectors in the vector space. Utilizing the vector space structure one can look at the linear maps from $\mathbb{F}_{q^r}$ to itself. These linear maps  have a kernel and an image, which are subspaces of the $r$-dimensional vector space.  If this linear map is full rank, then the kernel is $\{0\}$. This implies that it is a one-to-one onto map and being a one-to-one, onto map on a finite set, it is a permutation. So, if a full rank linear map can be written as a polynomial, then one obtains a permutation polynomial. Linear maps on $\mathbb{F}_{q^r}$ are studied as linearized polynomials and is  well-elaborated in  [\cite{Lidl:1986:IFF}]. \\
A linearized polynomial of $\mathbb{F}_{q^r}$ is given as 
\begin{equation}\label{eq:linpoly}
L(x)= \sum_{i=0}^{r-1} \alpha_ix^{q^i}
\end{equation} where the coefficients are from the extension field $\mathbb{F}_{q^r}$ of $\mathbb{F}_q$.
Associated with every linearized polynomial, is a matrix known as the Dickson's matrix [\cite{WU201379}]. For a linearized polynomial of $\mathbb{F}_{q^r}$ as specified in \eref{eq:linpoly}, the associated Dickson's matrix is 
\begin{equation}\label{eq:dickson_gen}
\mathcal{D}(L) = \begin{pmatrix}
\alpha_0 & \alpha_1 & \ldots & \alpha_{r-1}\\
\alpha_{r-1}^q & \alpha_0^q & \ldots & \alpha_{r-2}^q\\
\vdots & \vdots & \vdots & \vdots \\
\alpha_1^{q^{r-1}} & \alpha_2^{q^{r-1}} & \ldots & \alpha_0^{q^{r-1}}
\end{pmatrix}
\end{equation}
The rank of the linear map induced by the  linearized polynomial is precisely the rank of the associated Dickson's matrix. 
Linearized polynomials and permutations of finite fields from these linearized polynomials have been a topic of interest in many papers. Permutations arising from the linearized polynomials can be studied in a more systematic way, using linear algebraic techniques, which is the main thrust of this paper. We shall consider the second extension $\mathbb{F}_{q^2}$ of $\mathbb{F}_q$ and in this case, the linearized polynomials are of the form 
\[
L(x)=\alpha_1 x^q+\alpha_0 x
\]
where $\alpha_0, \alpha_1 \in \mathbb{F}_{q^2}$. Except the zero polynomial, these linearized polynomials are either rank $1$ or rank $2$.  The associated Dickson's matrix for this linearized polynomial is given by

\begin{equation}\label{eq:dickson}
\mathcal{D}(L) = \begin{pmatrix}
\alpha_0 & \alpha_1\\
\alpha_1^q & \alpha_0^{q}
\end{pmatrix}
\end{equation}
It is clear that the linearized polynomials for which $\alpha_0^{q+1}\neq \alpha_1^{q+1}$ are  permutation polynomials of $\mathbb{F}_{q^2}$. On the other hand, there is a non trivial kernel associated to rank $1$ linear maps and therefore they do not correspond to permutation polynomials. We denote the kernel and image of a linear map $L(x)$ by $\ker L(x)$ and ${\rm im}\,\,L(x)$ respectively. \\
It is a well known fact that associated with  every permutation is a inverse permutation. Inverse permutation polynomials are  `compositional inverses'.  Two permutation polynomials $f(x)$ and $h(x)$ are compositional inverses of each other if $f(x) \circ h(x) =h(x) \circ f(x) =x ~mod~(x^{q^2}-x)$. The `$\circ$' operation is defined as $f(x) \circ h(x) = f(h(x))$. The compositional inverse of a rank $2$ linearized polynomial is given by
\begin{lemma}\label{lem:lin_comp}
Let $L(x)=\alpha x^q +\beta x $ be a rank $2$ linearized polynomial (and therefore a permutation polynomial) of $\mathbb{F}_{q^2}$. Then $h(x)= \gamma x^q+ \epsilon x$ is the compositional inverse of $L(x)$ with
\[
\gamma=\frac{-\alpha}{\beta^{q+1}-\alpha^{q+1}}
\]
\[
\epsilon=\frac{\beta^q}{\beta^{q+1}-\alpha^{q+1}}
\]
\end{lemma}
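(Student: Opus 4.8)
The plan is to verify directly that $h(L(x)) = x$ as elements of $\mathbb{F}_{q^2}$, using the fact that a rank $2$ linearized polynomial is a bijection, so a one-sided inverse is automatically a two-sided inverse modulo $x^{q^2}-x$. First I would compute the composition $h(L(x)) = \gamma (L(x))^q + \epsilon L(x)$. Since the Frobenius $x \mapsto x^q$ is additive and satisfies $\alpha^{q^2} = \alpha$ on $\mathbb{F}_{q^2}$, we get $(L(x))^q = (\alpha x^q + \beta x)^q = \alpha^q x^{q^2} + \beta^q x^q = \alpha^q x + \beta^q x^q$. Substituting,
\[
h(L(x)) = \gamma(\alpha^q x + \beta^q x^q) + \epsilon(\alpha x^q + \beta x) = (\gamma \alpha^q + \epsilon \beta)x + (\gamma \beta^q + \epsilon \alpha)x^q.
\]
So it suffices to show the coefficient of $x$ equals $1$ and the coefficient of $x^q$ equals $0$.

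Next I would substitute the proposed values of $\gamma$ and $\epsilon$. Writing $\Delta = \beta^{q+1} - \alpha^{q+1}$ (which is nonzero precisely because $L$ has rank $2$, by the Dickson matrix criterion discussed above), the coefficient of $x$ becomes $\frac{-\alpha \cdot \alpha^q + \beta^q \cdot \beta}{\Delta} = \frac{\beta^{q+1} - \alpha^{q+1}}{\Delta} = 1$, and the coefficient of $x^q$ becomes $\frac{-\alpha \cdot \beta^q + \beta^q \cdot \alpha}{\Delta} = 0$. Hence $h(L(x)) = x$ on $\mathbb{F}_{q^2}$. A conceptually cleaner way to organize the same computation is to observe that composition of rank $\leq 2$ linearized polynomials corresponds to multiplication of their Dickson matrices: one checks $\mathcal{D}(h)\,\mathcal{D}(L) = I_2$ directly, since $\mathcal{D}(h)$ is exactly $\frac{1}{\Delta}\begin{pmatrix} -\alpha & \beta \\ \beta^q & -\alpha^q \end{pmatrix}$, which is $\mathcal{D}(L)^{-1}$ by the standard $2\times 2$ inverse formula (noting $\det \mathcal{D}(L) = \alpha_0^{q+1} - \alpha_1^{q+1} = \beta^{q+1} - \alpha^{q+1} = \Delta$).

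Finally, since $L(x)$ is a permutation of the finite set $\mathbb{F}_{q^2}$ and $h(L(x)) = x$ for all $x \in \mathbb{F}_{q^2}$, the map $h$ restricted to $\mathbb{F}_{q^2}$ is the inverse permutation, so also $L(h(x)) = x$ for all $x$, i.e. $L(x) \circ h(x) = h(x) \circ L(x) = x \bmod (x^{q^2}-x)$, which is the definition of compositional inverse. There is essentially no obstacle here: the only point requiring care is the bookkeeping with Frobenius exponents (using $\alpha^{q^2} = \alpha$ on $\mathbb{F}_{q^2}$) and confirming that $\Delta \neq 0$ so that $\gamma, \epsilon$ are well defined — both of which are immediate from the hypotheses and the preliminaries already established.
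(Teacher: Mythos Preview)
Your proof is correct and follows essentially the same direct-verification approach as the paper: both compute the composition and check that the coefficients of $x$ and $x^q$ come out to $1$ and $0$ respectively (the paper happens to compute $L\circ h$ rather than your $h\circ L$, which slightly changes the bookkeeping but not the idea). One small slip in your aside: with the paper's convention the Dickson matrix of $h$ is $\mathcal{D}(h)=\dfrac{1}{\Delta}\begin{pmatrix}\beta^{q} & -\alpha\\ -\alpha^{q} & \beta\end{pmatrix}$, which is indeed $\mathcal{D}(L)^{-1}$; the matrix you displayed has its entries permuted and does not multiply with $\mathcal{D}(L)$ to give $I_2$.
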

\begin{proof}

\[
f \circ h =\alpha(\gamma x^q+\epsilon x)^q +\beta (\gamma x^q +\epsilon x)
\]
We use the well known Binomial Theorem here [Theorem $1.46$, \cite {Lidl:1986:IFF}]. 
\begin{equation}\label{eq:f_circ_h}
f \circ h= \alpha (\gamma ^q x+\epsilon ^q x^q) +\beta (\gamma x^q + \epsilon x)
\end{equation}
Note that $(\beta^{q+1}-\alpha^{q+1})^q=\beta^{q+1}-\alpha^{q+1}$ and therefore
\[
\gamma^q=\frac{-\alpha^q}{\beta^{q+1}-\alpha^{q+1}}
\]
\[
\epsilon^q=\frac{\beta}{\beta^{q+1}-\alpha^{q+1}}
\]
The result is clear by plugging in $\gamma, \epsilon, \gamma^q$ and $\epsilon^q$ in \eref{eq:f_circ_h}.
\end{proof}
The counts for number of rank $1$ and rank $2$ monic linearized polynomials in $\mathbb{F}_{q^2}[x]$ are obtained easily. 
\begin{lemma} \label{lem:rank1_L}
The number of monic rank $1$ linearized polynomials over $\mathbb{F}_{q^2}$ is $q+1$.
\end{lemma}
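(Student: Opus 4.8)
The plan is to reduce the count to the number of solutions in $\mathbb{F}_{q^2}$ of a single equation coming from the Dickson matrix \eref{eq:dickson}, and then to count those solutions via the norm map. First I would pin down the shape of a monic rank $1$ linearized polynomial. A monic linearized polynomial $L(x)=\alpha_1x^q+\alpha_0x$ of $\mathbb{F}_{q^2}$ has $\alpha_1=1$ when $\alpha_1\neq 0$, and otherwise reduces to $L(x)=x$; but $L(x)=x$ has Dickson matrix the identity and hence rank $2$. So every monic rank $1$ linearized polynomial is of the form $L(x)=x^q+\alpha_0 x$ for a suitable $\alpha_0\in\mathbb{F}_{q^2}$, and conversely any such $L$ has a nonzero Dickson matrix and is therefore of rank $1$ or $2$.

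Next I would apply the rank criterion. For $L(x)=x^q+\alpha_0 x$ the matrix \eref{eq:dickson} has diagonal entries $\alpha_0,\alpha_0^{q}$ and both off-diagonal entries equal to $1$; in particular it is never the zero matrix, so it has rank $1$ precisely when its determinant $\alpha_0^{q+1}-1$ vanishes. Thus the monic rank $1$ linearized polynomials of $\mathbb{F}_{q^2}$ are in bijection with the set $\{\alpha_0\in\mathbb{F}_{q^2}:\alpha_0^{q+1}=1\}$, and it remains only to determine the size of this set.

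Finally I would count the $(q+1)$-th roots of unity in $\mathbb{F}_{q^2}$. Since $\mathbb{F}_{q^2}^*$ is cyclic of order $q^2-1=(q+1)(q-1)$, the equation $x^{q+1}=1$ has exactly $\gcd(q+1,\,q^2-1)=q+1$ solutions there; equivalently these solutions form the kernel of the norm map $x\mapsto x^{q+1}$ of $\mathbb{F}_{q^2}^*$ onto $\mathbb{F}_q^*$, which has $(q^2-1)/(q-1)=q+1$ elements. Hence there are exactly $q+1$ admissible values of $\alpha_0$, and therefore exactly $q+1$ monic rank $1$ linearized polynomials over $\mathbb{F}_{q^2}$.

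The argument is essentially routine, and the only point needing a little care is the opening step: one must exclude the degenerate case $\alpha_1=0$ (which never yields rank $1$) and confirm that the vanishing of the determinant here forces rank exactly $1$ rather than $0$ — this is immediate because the off-diagonal entries of the relevant Dickson matrix are both $1$.
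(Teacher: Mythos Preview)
Your proof is correct and follows essentially the same route as the paper: both reduce the count, via the determinant of the Dickson matrix \eref{eq:dickson}, to the equation $\alpha_0^{q+1}=1$ in $\mathbb{F}_{q^2}$ and then count its $q+1$ solutions. The paper counts these by listing the roots $\omega^{i(q-1)}$ for a primitive element $\omega$, while you invoke cyclicity of $\mathbb{F}_{q^2}^*$ (equivalently the norm map), and you are slightly more explicit about excluding the $\alpha_1=0$ case; these are cosmetic differences only.
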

\begin{proof}
From \eref{eq:dickson}, assuming $\alpha_1 = 1$ for monic polynomials, one concludes that rank $1$ linearized polynomials are the ones with  $\alpha_0^{q+1}=1$. In other words,  $\alpha_0$ is a root of $x^{q+1}-1$. This polynomial has distinct roots in $\mathbb{F}_{q^2}$ [Theorem $2.42$, \cite {Lidl:1986:IFF}]. These roots are of the form $\omega^{i(q-1)}$ where $\omega$ is a primitive element and $i=1,2,\ldots,q+1$. Therefore, there are $q+1$ distinct rank $1$ monic linearized polynomials in $\mathbb{F}_{q^2}$.
\end{proof}
The consequence of this is the following.
\begin{lemma}\label{lem:rank2_L}
The number of monic linearized polynomials of rank $2$ over $\mathbb{F}_{q^2}$ 
is $q^2-q$.
\end{lemma}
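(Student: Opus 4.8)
The plan is to obtain the count by complementation from \lref{lem:rank1_L}; the only point that requires care is to pin down exactly which linearized polynomials of $\mathbb{F}_{q^2}$ are to be called monic. Every linearized polynomial of $\mathbb{F}_{q^2}$ has the form $\alpha_1 x^q + \alpha_0 x$, and it is monic precisely when $\alpha_1 = 1$ (so that its leading term is $x^q$), or when $\alpha_1 = 0$ and $\alpha_0 = 1$, the latter case contributing the single polynomial $x$. Hence the monic linearized polynomials of $\mathbb{F}_{q^2}$ are the $q^2$ polynomials $x^q + \alpha_0 x$ with $\alpha_0 \in \mathbb{F}_{q^2}$, together with $x$, for a total of $q^2 + 1$.

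Next I would invoke the dichotomy already observed above: a nonzero linearized polynomial of $\mathbb{F}_{q^2}$ has rank $1$ or rank $2$ according to whether its Dickson matrix \eref{eq:dickson} is singular or not, and these two possibilities are mutually exclusive. Since every monic polynomial is in particular nonzero, each of the $q^2 + 1$ monic linearized polynomials lands in exactly one of these two classes. By \lref{lem:rank1_L} precisely $q+1$ of them have rank $1$, so the number of rank $2$ ones is $(q^2 + 1) - (q+1) = q^2 - q$, which is the claim.

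One can also reach the same number by a direct count: the Dickson matrix of $x^q + \alpha_0 x$ has determinant $\alpha_0^{q+1} - 1$, which vanishes exactly at the $q+1$ roots of $x^{q+1} - 1$, so $q^2 - (q+1) = q^2 - q - 1$ of the polynomials $x^q + \alpha_0 x$ are of rank $2$; adjoining the identity polynomial $x$, which is full rank, again gives $q^2 - q$. There is no real difficulty here beyond this bit of bookkeeping — it is precisely the inclusion of the degenerate monic linearized polynomial $x$ (the case $\alpha_1 = 0$) that makes the answer $q^2 - q$ rather than $q^2 - q - 1$.
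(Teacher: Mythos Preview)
Your proof is correct and follows exactly the same complementation argument as the paper: count the $q^2+1$ monic linearized polynomials (the $q^2$ polynomials $x^q+\alpha_0 x$ together with $x$), invoke \lref{lem:rank1_L} to remove the $q+1$ rank~$1$ ones, and obtain $q^2-q$. Your additional direct count via the Dickson determinant is a nice sanity check but not a different method.
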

\begin{proof}
The number of possible coefficients $\alpha_0$ in monic linearized polynomials $x^q+\alpha_0 x$ is $q^2$. Therefore, total number of monic linearized polynomials including the polynomial $x$ is $q^2+1$. Out of them, $q+1$ are rank $1$ from \lref{lem:rank1_L}.  Hence, the number of rank $2$ linearized polynomials is $q^2-q$.
\end{proof}

\section{Permutation Polynomials generated from the Linearized Polynomials of $\mathbb{F}_{q^2}$} 
In this section, permutation polynomials of the type  $f=(x^q+\delta_ix)^m+ L(x)$ are studied where $\delta_i$ is a $(q+1)$-th root of unity and $L(x)$ is a linearized polynomial of $\mathbb{F}_{q^2}$. The following theorem provides a sufficiency condition for existence of PPs, using which a lower bound on the number of normalized PPs of the shape $f=(x^q+\delta_ix)^m + L(x)$ is calculated. For the sake of brevity, we shall use $s = x^q + \delta_ix$.  \\ 
\begin{theorem}\label{thm:lowerbound}
The polynomial $f = s^m + L(x)$ is a permutation polynomial of $\mathbb{F}_{q^2}$ if $L(x)$ is a rank $2$ linearized polynomial of $\mathbb{F}_{q^2}$ that maps the kernel of $s$ to the subspace containing the image of $s^m$.
\end{theorem}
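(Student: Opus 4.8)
The plan is to exploit the fact that $s = x^q + \delta_i x$ is a rank $1$ linearized map whose kernel $K := \ker s$ is a one-dimensional $\mathbb{F}_q$-subspace, and whose image ${\rm im}\,s$ is also one-dimensional. Since $\delta_i^{q+1}=1$, one computes directly that $K = \{x : x^q = -\delta_i x\}$; write $K = \mathbb{F}_q\cdot v_0$ for a fixed nonzero $v_0$. The key structural observation is that $\mathbb{F}_{q^2} = K \oplus W$ as $\mathbb{F}_q$-vector spaces for any complement $W$, and $s$ maps $W$ bijectively onto ${\rm im}\,s$. Moreover $s^m$ as a polynomial sends $x$ to $(s(x))^m$, so $s^m$ kills $K$ and maps everything into the coset structure of $({\rm im}\,s)^m$ — in particular the image of the evaluation map $x \mapsto s(x)^m$ lands in the $\mathbb{F}_q$-span of $\{t^m : t \in {\rm im}\,s\}$, which is the one- or two-dimensional subspace referred to in the hypothesis as "the subspace containing the image of $s^m$." Call this subspace $V$.

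Next I would set up the bijectivity argument by decomposing the domain. To show $f = s^m + L$ is injective on $\mathbb{F}_{q^2}$, suppose $f(x) = f(y)$, i.e. $s(x)^m - s(y)^m = L(y-x)$. Write $z = y - x$ and split using $\mathbb{F}_{q^2} = K \oplus W$. The hypothesis says $L(K) \subseteq V$ and $L$ is rank $2$, hence $L$ is a bijection; so $L(K)$ is exactly a one-dimensional subspace sitting inside $V$. I would argue first that $x$ and $y$ must lie in the same coset of $K$: project the equation onto a complement of $V$ (or onto $\mathbb{F}_{q^2}/V$). The left-hand side $s(x)^m - s(y)^m$ lies in $V$; hence $L(z) \in V$; since $L$ is a bijection and $L(K)\subseteq V$ with $\dim L(K) = \dim K$ possibly equal to or less than $\dim V$, we need $L^{-1}(V) \supseteq K$, and when $V = L(K)$ this forces $z \in K$, i.e. $y = x + k$ for some $k \in K$. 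Then $s(y) = s(x)$ (as $s$ kills $K$), so $s(x)^m = s(y)^m$, and the equation collapses to $0 = L(z) = L(k)$; injectivity of $L$ gives $k = 0$, hence $x = y$. Since $\mathbb{F}_{q^2}$ is finite, injectivity yields that $f$ is a permutation polynomial.

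The case that needs care is when $\dim V = 2$, i.e. when $\{t^m : t \in {\rm im}\,s\}$ spans all of $\mathbb{F}_{q^2}$: then "$L$ maps the kernel of $s$ into $V$" is automatic and gives no constraint, yet the theorem still claims $f$ is a PP — so in that regime the argument above must be supplemented, presumably by showing that when $V = \mathbb{F}_{q^2}$ the map $s^m$ restricted to a transversal of $K$ is itself injective into $\mathbb{F}_{q^2}$ and interacts with the rank-$2$ map $L$ so that no collision survives; alternatively one shows $V$ is genuinely one-dimensional for the relevant exponents $m$ (which happens when $m$ is chosen so that $t \mapsto t^m$ is constant on the $\mathbb{F}_q^*$-scaling within ${\rm im}\,s$, e.g. $m \equiv 0 \bmod (q-1)$ up to a twist). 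I expect the main obstacle to be exactly this dichotomy: pinning down $V$ precisely and handling the $\dim V = 2$ case, which likely forces an extra hypothesis on $m$ (such as $q-1 \mid m$ or $\gcd$-type conditions) that the statement is implicitly assuming via the phrase "the subspace containing the image of $s^m$." Once $V$ is correctly identified as one-dimensional, the coset-decomposition argument above closes cleanly.
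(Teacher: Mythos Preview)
Your core argument --- decompose $\mathbb{F}_{q^2}$ along $K=\ker s$, observe $s(x)^m - s(y)^m \in V$, deduce $L(y-x)\in V$, use $L(K)=V$ to force $y-x\in K$, and then collapse to $L(y-x)=0$ --- is correct and is exactly what the paper does, only phrased in coordinates: the paper fixes a basis $\{u,v\}$ with $u\in K$, writes $f(w_i)=\beta_i^m z + L(w_i)$ with $z=s^m(v)$, and splits into the two cases $\beta_i=\beta_j$ and $\beta_i\neq\beta_j$, reaching the same contradiction with the rank of $L$.

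Your only real hesitation is unnecessary. The subspace $V$ containing ${\rm im}\,s^m$ is \emph{always} one-dimensional, for every $m$. Indeed ${\rm im}\,s$ is an $\mathbb{F}_q$-line, say $\mathbb{F}_q\cdot t_0$; any element of ${\rm im}\,s$ is $\alpha t_0$ with $\alpha\in\mathbb{F}_q$, and then $(\alpha t_0)^m=\alpha^m t_0^m$ with $\alpha^m\in\mathbb{F}_q$. Hence ${\rm im}\,s^m\subseteq\mathbb{F}_q\cdot t_0^m$, a single $\mathbb{F}_q$-line. (This is precisely the paper's observation that $f(w_i)=\beta_i^m z + L(w_i)$ with the fixed vector $z=s^m(v)$.) So $\dim_{\mathbb{F}_q}V=1$ automatically, the hypothesis $L(K)\subseteq V$ is a genuine constraint, $L(K)=V$ by comparing dimensions, and your coset argument closes without any side condition on $m$. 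The speculative paragraph about $\dim V=2$, extra $\gcd$ hypotheses, or $q-1\mid m$ can simply be deleted.
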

\begin{proof}
As $s = x^q + \delta_ix$ where $\delta_i$ is a $(q+1)$-th root of unity, therefore $s$ is a rank $1$ linearized polynomial of $\mathbb{F}_{q^2}$. Let $u \in \mathbb{F}_{q^2}$ be an element in the kernel of $s$. Consider a basis $\{u,v\}$ of $\mathbb{F}_{q^2}$. So any $w_i \in \mathbb{F}_{q^2}$ can be written as $w_i = \alpha_i u + \beta_i v$ where $\alpha_i,\beta_i \in \mathbb{F}_q$. For $f = s^m + L(x)$, observe that $f(w_i)  = s^m(\beta_i v) + L(w_i) = \beta_i^ms^m(v) + L(w_i) = \beta_i^mz + L(w_i)$, where $z = s^m(v)$. For $f$ to be a permutation polynomial, it is enough to demonstrate that $f(w_i) \not= f(w_j)$ whenever $w_i \not= w_j$ for $w_i, w_j \in \mathbb{F}_{q^2}$.

Consider $w_i,w_j \in \mathbb{F}_{q^2}$ such that $\beta_i = \beta_j$ and $\alpha_i \not= \alpha_j$. Then $f(w_i) = \beta_i^m z + L(w_i)$ and $f(w_j) = \beta_i^m z + L(w_j)$. If $L(x)$ is a rank $2$ linearized polynomial, then $L(w_i) \not= L(w_j)$ and this ensures that $f(w_i) \not= f(w_j)$.

Now consider $w_i, w_j \in \mathbb{F}_{q^2}$ such that $\beta_i \not= \beta_j$. If $f(w_i) = f(w_j)$, then rearranging the terms one gets \[ (\beta_i^m - \beta_j^m) z = L(w_j) - L(w_i) = (\alpha_j - \alpha_i)L(u) + (\beta_j - \beta_i)L(v) \] 
But if $L(u) = \gamma z$ where $\gamma \in \mathbb{F}_q$, then it is clear that the above equation cannot hold and therefore $f(w_i) \not= f(w_j)$. The condition that $L(u) = \gamma z$ is precisely ensuring that the kernel of $s$ (the line containing $u$) maps to the line containing the image of $s^m$ (all elements of the type $\beta_i^m z$, which is contained in the line defined by $z$).
\end{proof}
A few comments are in order about \tref{thm:lowerbound}. Firstly $s$ stands for any one of the $(q+1)$ monic rank $1$ linearized polynomials. Secondly, though the theorem holds for all possible values of $m$, it is meaningless to consider all of these $m$. For the case $m=1$, \tref{thm:lowerbound} just tells us a condition under which a rank $2$ linearized polynomial can be added to a monic rank $1$ linearized polynomial to obtain another rank $2$ linearized polynomial. For the case $m=q$, we have $s^q = (x^q + \delta_ix)^q = \delta_i^q(x^q + \delta_ix) = \delta_i^qs$, which is a scalar multiple of $s$. As a result, for all $m \ge q$, \tref{thm:lowerbound} generates PPs which are scalar multiples of PPs generated when $1 \le m < q$. Therefore technically, new normalized PPs are generated only when $2 \le m \le (q-1)$.\\ Note that \tref{thm:lowerbound} is a generalization of part (a) Theorem $5.12$ of [\cite{AKBARY201151}] where only even values of $m$ are considered and the special case of $\delta_i = -1$ is the only case under consideration. Theorem $6.4$ of  [\cite{YUAN2011560}], is also restricted to the case $\delta_i = -1$, whereas \tref{thm:lowerbound} is far more general.
\tref{thm:lowerbound} can be utilized to establish a lower bound on the number of normalized PPs of the kind $f=(x^q+\delta_i x)^m+L(x)$.
\begin{corollary}\label{cor:lowerbound}
Consider $(x^q + \delta_ix)^m$ for $\delta_i$ which is a $(q+1)$-th root of unity and any $m$ such that $2 \le m < q$. Then there are at least $q(q-1)^2$ normalized PPs having the shape $(x^q + \delta_ix)^m + L(x)$ where $L(x)$ is a rank 2 linearized polynomial.  
\end{corollary}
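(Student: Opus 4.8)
The plan is to produce $q(q-1)^2$ pairwise distinct normalized PPs by applying \tref{thm:lowerbound}, with the count obtained from a direct linear-algebraic enumeration of the admissible linearized polynomials $L(x)$.

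First I would set up the hypothesis of \tref{thm:lowerbound}. Fix a $(q+1)$-th root of unity $\delta_i$ and write $s = x^q+\delta_ix$, a rank $1$ linearized polynomial; its kernel is a one-dimensional $\mathbb{F}_q$-subspace, say $\ker s = \langle u\rangle$ with $u\neq 0$, and we pick $v$ so that $\{u,v\}$ is a basis of $\mathbb{F}_{q^2}$. Since $v\notin\ker s$ we have $s(v)\neq 0$, and as evaluation is a ring homomorphism $z:=s^m(v)=(s(v))^m\neq 0$; thus ``the subspace containing the image of $s^m$'' in \tref{thm:lowerbound} is the genuine line $\langle z\rangle$, and $L(\ker s)\subseteq\langle z\rangle$ is the condition to be analyzed. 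I would also check that any $f=s^m+L(x)$ with $L$ linearized and $2\le m<q$ is automatically \emph{normalized}: the top term of $(x^q+\delta_ix)^m$ is $x^{mq}$ with coefficient $1$, and $q<2q\le mq\le q^2-q<q^2$, so no reduction modulo $x^{q^2}-x$ touches it while $L(x)=\alpha x^q+\beta x$ has degree at most $q<mq$; hence $f$ is monic, and clearly $f(0)=0$.

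The core step is to count the rank $2$ linearized polynomials $L$ with $L(\ker s)\subseteq\langle z\rangle$. Identifying linearized polynomials of $\mathbb{F}_{q^2}$ with $\mathbb{F}_q$-linear endomorphisms (rank $2$ corresponding to invertibility), such an $L$ is determined by the images of $u$ and $v$: invertibility forces $L(u)\neq 0$, so $L(u)$ may be any of the $q-1$ nonzero elements of $\langle z\rangle$, after which $L(v)$ may be any of the $q^2-q$ vectors lying outside $\langle L(u)\rangle$. This gives $(q-1)(q^2-q)=q(q-1)^2$ admissible $L$, a count independent of $m$. Since the map $L\mapsto s^m+L$ is injective (two such sums that agree as functions on $\mathbb{F}_{q^2}$ differ by a linearized polynomial of degree $<q^2$ that vanishes identically, hence is $0$), \tref{thm:lowerbound} yields $q(q-1)^2$ distinct normalized PPs of the stated shape; as \tref{thm:lowerbound} gives only a sufficient condition, this is a lower bound, which is exactly the claim.

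The only genuine obstacle is the enumeration above: one must be careful that the rank $2$ condition translates exactly into ``$L(u)\neq 0$ and $L(v)\notin\langle L(u)\rangle$'', and verify that the resulting count does not secretly depend on $m$ (it does not, because $z\neq 0$ for every $m\ge 1$, so the line $\langle z\rangle$ behaves uniformly). Everything else — the normalization check and the injectivity of $L\mapsto s^m+L$ — is routine.
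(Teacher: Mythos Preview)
Your proposal is correct and follows essentially the same route as the paper: invoke \tref{thm:lowerbound}, then count the admissible rank~$2$ maps by choosing $L(u)$ among the $q-1$ nonzero elements of $\langle z\rangle$ and $L(v)$ among the $q^2-q$ vectors outside that line. You have simply added welcome rigor the paper leaves implicit --- the verification that $z\neq 0$, that $f$ is genuinely normalized for $2\le m<q$, and that distinct $L$ yield distinct polynomials.
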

\begin{proof}
\tref{thm:lowerbound} prescribes a sufficient condition for $L(x)$ that make $(x^q + \delta_ix)^m + L(x)$ a normalized PP. Using notation from the proof of \tref{thm:lowerbound}, the linear maps corresponding to $L(x)$ should map $u$ to $\gamma z$. Any linear map is completely specified by specifying where a basis maps to. For the basis $\{u,v\}$ (as in the proof of \tref{thm:lowerbound}), there are $q-1$ choices for $\gamma$ which specifies $L(u) = \gamma z$ and there are $q^2-q$ vectors that are possible  choices for $L(v)$. This makes a total of $q(q-1)^2$ possibilities for $L(x)$.  
\end{proof}
At this point, we make an estimate of the total number of PPs that one can obtain which has the form $(x^q + \delta_ix)^m + L(x)$. From \cref{cor:lowerbound} we see that the number of normalized PPs for each $\delta_i$ and each permissible $m$ is $q(q-1)^2$. There are $q+1$ choices for $\delta_i$ (as these are the $(q+1)$-th roots of unity). Further there are $q-2$ possible choices for $m$ (as $2 \le m < q$). This makes a total of $q(q-1)^2(q+1)(q-2)$ normalized PPs. As each normalized PP $f$ can be associated with $q^2(q^2-1)$ PPs of the form $\alpha f(x) + \beta$ where $\alpha \in \mathbb{F}^*_{q^2}, \beta \in \mathbb{F}_{q^2}$, the estimate for PPs obtained by the above construction reaches  $q^3(q-1)^3(q+1)^2(q-2)$ -- a ninth degree polynomial in $q$. 
\begin{example}
Let $q$ be the prime $p=7$. Over $\mathbb{F}_{7^2}$, $s=(x^7+\delta_ix)$.  The total number of rank $2$ linearized polynomials over $\mathbb{F}_{7^2}$ is $2016$. Out  of them $p(p-1)^2=252$ rank $2$ linearized polynomials satisfy the required condition for each $s^m$.  There are $8$ choices for $\delta_i$ and $5$ choices for $m$. 
Therefore a lower bound for the total number of normalized PPs of the kind $(x^7 + \delta_ix)^m + L(x)$ is $10080$. If we count all PPs from the associated families of each of these normalized PPs, then the number of PPs become  $10080\times 49\times 48=23708160$. 
\end{example}
Some more examples of counts of normalized PPs for small fields are collected together in Table~1.
\begin{table}[!ht]
\begin{center}
\caption{Number of Normalized PPs $f(x)=s^m+L(x)$ for one $s=(x^p+\delta_i x)$ and rank $2$ $L(x)$ for small fields}
\vspace{0.2cm}
\label{tab:Table1}
\begin{tabular}{|c|c|c|c|c|c|c|c|}
\hline
Field & $\mathbb{F}_{4^2}$ & $\mathbb{F}_{5^2}$ &$\mathbb{F}_{7^2}$ & $\mathbb{F}_{8^2}$  & $\mathbb{F}_{9^2}$  & $\mathbb{F}_{11^2}$ & $\mathbb{F}_{13^2}$ \\
\hline
$index~m=2$ & $36$ & $80$ & $252$  & $392$ & $576$ & $1100$ & $1872 $ \\
\hline
$index~m=3$ & $36$ & $80$ & $252$ & $392$ & $3168$ & $1100$ & $1872$\\
\hline
$index~m=4$ & $--$ & $80$ & $840$ & $392$ & $576$ & $1100$  & $1872$\\
\hline
$index~m=5$ & $--$ & $--$ & $252$ & $392$ & $1872$ & $1100$  & $1872$\\
\hline
$index~m=6$ & $--$ & $--$ & $252$ & $392$ & $576$ & $5940$  & $1872$\\
\hline
$index~m=7$ & $--$ & $--$ & $--$ &  $392$ & $576$ & $1100$  & $9984$ \\
\hline
$index~m=8$ & $--$ & $--$ & $--$ & $--$ & $576$ & $1100$ & $1872$\\
\hline
$index~m=9$ & $--$ & $--$ & $--$  & $--$ & $--$ & $1100$ & $1872$\\
\hline
$index~m=10$ & $--$ & $--$  & $--$ & $--$ & $--$ & $1100$ & $1872$  \\
\hline
$index~m=11$ & $--$ & $--$  & $--$ & $--$ & $--$ & $--$ & $1872$\\
\hline
$index~m=12$ & $--$ & $--$  & $--$ & $--$ & $--$ & $--$ & $1872$\\
\hline
\end{tabular}
\end{center}
\end{table}

As one can see, there is a general agreement in the numbers predicted by \cref{cor:lowerbound} with those in Table~$1$. The special cases where the numbers do not agree shall be dealt with a bit later in the paper.

\tref{thm:lowerbound} rests crucially on the fact that the linearized polynomial $L(x)$ that is added to $(x^q + \delta_ix)^m$ corresponds to a rank $2$ linear map. We now explore if one can obtain permutation polynomials by taking $L(x)$ that correspond to rank $1$ linear maps. Once again let $s = (x^q + \delta_ix)$, where $\delta_i$ is a $(q+1)$-th root of unity.   
\begin{theorem}\label{thm:coprime}
$f= s^m + L(x)$ (with $L(x)$ a rank $1$ linear map) is a permutation polynomial of $\mathbb{F}_{q^2}$ only if $gcd(m,q-1)=1$. Further, $f$ is a permutation polynomial precisely when  $\ker s \not= \ker L(x)$ and  ${\rm im}\, s^m \not= {\rm im}\, L(x)$. 
\end{theorem}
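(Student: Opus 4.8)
The plan is to adapt the adapted-basis argument from the proof of \tref{thm:lowerbound}. Fix a nonzero $u\in\ker s$ and extend it to an $\mathbb{F}_q$-basis $\{u,v\}$ of $\mathbb{F}_{q^2}$. For $w=\alpha u+\beta v$ with $\alpha,\beta\in\mathbb{F}_q$, linearity of $s$ gives $s(w)=\beta\, s(v)$, so $f(w)=\beta^m z+\alpha L(u)+\beta L(v)$ where $z:=s^m(v)=s(v)^m$. Since $v\notin\ker s$ we have $s(v)\neq 0$, hence $z\neq 0$, and $\mathrm{im}\, s^m=\{\beta^m z:\beta\in\mathbb{F}_q\}$ spans the line $\langle z\rangle$, with equality exactly when $\gcd(m,q-1)=1$ (the map $\beta\mapsto\beta^m$ being a bijection of $\mathbb{F}_q$ precisely in that case). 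The proof then splits into cases according to how the rank $1$ map $L$ meets $\ker s$ and $\langle z\rangle$.

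\textbf{Case 1:} $\ker L(x)=\ker s$. Then $L(u)=0$, so $f(u)=0^m z+L(u)=0=f(0)$ with $u\neq 0$; thus $f$ is not injective and cannot be a PP, which is exactly the failure of the condition $\ker s\neq\ker L(x)$.

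\textbf{Case 2:} $\ker L(x)\neq\ker s$, i.e. $a:=L(u)\neq 0$. Since $L$ has rank $1$, $\mathrm{im}\,L=\langle a\rangle$ and $L(v)=ca$ for some $c\in\mathbb{F}_q$, so $f(w)=\beta^m z+(\alpha+c\beta)a$. If $z\in\langle a\rangle$ (equivalently $\mathrm{im}\,s^m\subseteq\mathrm{im}\,L$), then $f$ takes all values in the $q$-element line $\langle a\rangle$ and cannot be a bijection of $\mathbb{F}_{q^2}$. If instead $z\notin\langle a\rangle$, then $\{a,z\}$ is a basis and $f(w)$ corresponds to the pair $(\alpha+c\beta,\ \beta^m)\in\mathbb{F}_q^2$; this is a bijection of $\mathbb{F}_q^2$ iff $\beta\mapsto\beta^m$ is a bijection of $\mathbb{F}_q$, i.e. iff $\gcd(m,q-1)=1$. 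When the gcd is $1$ one solves $\beta^m=\eta$ and then $\alpha=\xi-c\beta$ uniquely; when $d=\gcd(m,q-1)>1$ one takes $\beta_1\neq\beta_2$ with $\beta_1^m=\beta_2^m$ and $\alpha_1=0$, $\alpha_2=c(\beta_1-\beta_2)$ to get a collision $f(\beta_1 v)=f(\alpha_2 u+\beta_2 v)$.

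Assembling the cases yields both assertions: if $f$ is a PP we must be in Case 2 with $z\notin\langle a\rangle$ and $\gcd(m,q-1)=1$, that is $\gcd(m,q-1)=1$, $\ker s\neq\ker L(x)$ and $\mathrm{im}\,s^m\neq\mathrm{im}\,L(x)$ all hold; conversely, those conditions place us exactly where the coordinate map $(\alpha+c\beta,\beta^m)$ is a bijection, so $f$ is a PP. The one point I would be most careful about is the bookkeeping between $\mathrm{im}\,s^m$ and the line $\langle z\rangle$: for $\gcd(m,q-1)>1$ the set $\mathrm{im}\,s^m$ is a proper subset of $\langle z\rangle$, so the clause ``$\mathrm{im}\,s^m\neq\mathrm{im}\,L(x)$'' is only the intended condition once the necessary constraint $\gcd(m,q-1)=1$ is in force; I would therefore phrase the characterization with that understanding (or simply list $\gcd(m,q-1)=1$ among the equivalent conditions). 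Everything else is routine linear algebra over $\mathbb{F}_q$.
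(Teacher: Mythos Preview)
Your proof is correct and follows essentially the same approach as the paper: both analyze $f$ via an $\mathbb{F}_q$-basis adapted to $\ker s$, reduce the necessity of $\gcd(m,q-1)=1$ to the bijectivity of $\beta\mapsto\beta^m$ on $\mathbb{F}_q$, and use the direct-sum structure of the two image lines for the converse. Your caveat about reading the clause $\mathrm{im}\,s^m\neq\mathrm{im}\,L(x)$ under the standing hypothesis $\gcd(m,q-1)=1$ matches the paper's own implicit reading.
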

\begin{proof}
Note that if $L(x)$ is a rank $1$ linear map, then the image of this linear map is one dimensional. Thus the cardinality of the image of $L(x)$ is $q$. Observe that the cardinality of the image of $s$ is also $q$ (as $s$ is also a rank $1$ linear map). Therefore the cardinality of  image of $s^m$ is at most $q$. The image of $f$ consists of all possible sums of elements in the image of $L(x)$ with elements in the image of $s^m$. The cardinality of the image of $f$ needs to be $q^2$ for $f$ to be a permutation polynomial. This is only possible if cardinality of image of $s^m$ is precisely equal to $q$. 

If $v \in \mathbb{F}_{q^2}$ is in the image of $s^m$, then the other elements in the image of $s^m$ are of the form $\alpha^m v$ where $\alpha \in \mathbb{F}_q$. Thus the image of $s^m$ has cardinality $q$ iff $x^m$ is a permutation polynomial of $\mathbb{F}_q$. It is well known that $x^m$ is a permutation polynomial of $\mathbb{F}_q$ precisely when gcd$(m,q-1) = 1$.

If $\ker s = \ker L(x)$, then all the elements $v \in \ker L(x)$ map to zero under $f$, that is, $f(v) = 0$. Therefore, $f$ cannot be a permutation polynomial, as multiple elements map to zero under $f$. Similarly, if ${\rm im}\, s^m = {\rm im}\, L(x)$, then every element in the image of $f$ lies within the image of $L(x)$. Since image  $L(x)$ is one dimensional, $f$ cannot be a permutation polynomial. Conversely, when ${\rm im}\, s^m \not= {\rm im}\, L(x)$, then $|{\rm im}\, s^m| = q$ and $|{\rm im}\,L(x)| = q$. Further ${\rm im}\, s^m \cap {\rm im}\, L(x) = \{ 0 \}$ and this ensures that ${\rm im}\, f = \mathbb{F}_{q^2}$.
\end{proof}
\tref{thm:coprime} provides a necessary and sufficient condition for linearized polynomials of rank $1$ to be added to $s^m$ (for $m>1$) to form a normalized PP. For $m=1$, \tref{thm:coprime} holds but the normalized PPs so obtained are precisely the linearized polynomials of rank 2. Note that \tref{thm:coprime} is a generalization of part (b) of Theorem $5.12$ of [\cite{AKBARY201151}], where only the special case of $\delta_i = -1$ is considered and $L(x)$ was taken to be rank $1$ matrices of the special form $\alpha (x^q + x)$, with $\alpha \in \mathbb{F}_q^*$. As one can see from \tref{thm:coprime}, there is a much larger number of normalized PPs of this form than considered in Theorem $5.12$ of [\cite{AKBARY201151}]. In the same way, \tref{thm:coprime} is also a generalization of part (b) of Theorem~6.4 of [\cite{YUAN2011560}], since there again $s$ is restricted to the special case of $x^q - x$, whereas $L(x)$ of the form $\alpha x^q + \alpha^q x$ essentially translates to permissible rank $1$ linearized polynomials that do not map to the image of $(x^q-x)^m$. The result in Theorem~6.4 of [\cite{YUAN2011560} only accounts for $q^2-q$ of the total number of normalized PPs of this form associated with $s = x^q -x$. The precise count of normalized PPs  arising out of \tref{thm:coprime} is 
\begin{corollary}\label{cor:coprime_count}
Consider $s =(x^q + \delta_ix)$ for $\delta_i$ which is a $(q+1)$-th root of unity and a $m > 1$ such that $gcd(m,q-1)=1$. Then there are $q^2(q-1)$ normalized PPs having the shape $s^m + L(x)$ where $L(x)$ is a rank $1$ linearized polynomial.  
\end{corollary}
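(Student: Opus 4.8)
The plan is to read the characterization straight off \tref{thm:coprime} and reduce the corollary to a counting problem. Since $\gcd(m,q-1)=1$ is assumed, \tref{thm:coprime} tells us that, for a rank $1$ linearized polynomial $L(x)$, the polynomial $f=s^m+L(x)$ is a permutation polynomial of $\mathbb{F}_{q^2}$ if and only if $\ker L(x)\neq\ker s$ and ${\rm im}\, L(x)\neq{\rm im}\, s^m$. So the corollary will follow once I (i) note that every such $f$ is a normalized polynomial and that distinct admissible $L(x)$ yield distinct $f$, and (ii) count the rank $1$ linearized polynomials $L(x)$ that meet those two conditions.

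Step (i) is routine bookkeeping. Since $m>1$, the polynomial $s^m$ has degree $qm>q\geq\deg L(x)$, so $f=s^m+L(x)$ is monic, and $s^m(0)=L(0)=0$ gives $f(0)=0$; hence each such $f$ is normalized. Moreover $L(x)=f-s^m$, and distinct coefficient pairs $(\alpha_1,\alpha_0)$ induce distinct maps $\alpha_1x^q+\alpha_0x$ on $\mathbb{F}_{q^2}$, so the number of normalized PPs of the stated shape equals the number of admissible $L(x)$.

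For step (ii) I would use the standard parametrization of rank $1$ linear maps of the $2$-dimensional $\mathbb{F}_q$-space $\mathbb{F}_{q^2}$: such a map $L$ is determined by, and may be freely prescribed by, the triple $(\ker L,\ {\rm im}\, L,\ \varphi)$, where $\ker L$ and ${\rm im}\, L$ are each one of the $q+1$ one-dimensional $\mathbb{F}_q$-subspaces (lines) of $\mathbb{F}_{q^2}$ and $\varphi\colon \mathbb{F}_{q^2}/\ker L\to{\rm im}\, L$ is the induced $\mathbb{F}_q$-isomorphism, of which there are $q-1$. The constraint $\ker L\neq\ker s$ excludes exactly one line, leaving $q$ admissible kernels. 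Because $\gcd(m,q-1)=1$, the map $\lambda\mapsto\lambda^m$ permutes $\mathbb{F}_q$, so ${\rm im}\, s^m$ is itself a single line — this is exactly the observation already used in the proof of \tref{thm:coprime} — and hence the constraint ${\rm im}\, L\neq{\rm im}\, s^m$ likewise excludes exactly one line, leaving $q$ admissible images. These two choices are independent of each other and of the $q-1$ choices of $\varphi$, so the number of admissible $L(x)$, and therefore of normalized PPs of the form $s^m+L(x)$ with $L$ of rank $1$, is $q\cdot q\cdot(q-1)=q^2(q-1)$.

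I do not anticipate any real obstacle: the one point deserving a moment's care is that ${\rm im}\, s^m$ is a single one-dimensional subspace rather than some larger set — if it were larger, excluding it would cost more than one of the $q+1$ lines — but this is precisely what the hypothesis $\gcd(m,q-1)=1$ buys, and it is already established inside \tref{thm:coprime}. As a consistency check, the same total is obtained by inclusion--exclusion: among the $(q+1)^2(q-1)$ rank $1$ linearized polynomials of $\mathbb{F}_{q^2}$, exactly $q^2-1$ have $\ker L=\ker s$, exactly $q^2-1$ have ${\rm im}\, L={\rm im}\, s^m$, and exactly $q-1$ have both, giving $(q+1)^2(q-1)-2(q^2-1)+(q-1)=q^2(q-1)$.
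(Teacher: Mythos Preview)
Your proof is correct and follows essentially the same approach as the paper: both count the rank~$1$ linearized polynomials satisfying the two conditions of \tref{thm:coprime}. The paper uses the concrete parametrization $L(x)=\alpha(x^q+\delta_jx)$, counting $q$ choices for $\delta_j\neq\delta_i$ (fixing the kernel) and $q^2-q$ choices for $\alpha$ (fixing the image away from ${\rm im}\,s^m$), while you use the equivalent abstract parametrization $(\ker L,\ {\rm im}\,L,\ \varphi)$; the two factorizations $q\cdot(q^2-q)$ and $q\cdot q\cdot(q-1)$ are visibly the same count. Your version is slightly more thorough in verifying that each admissible $L$ yields a distinct normalized PP, and the inclusion--exclusion check is a nice addition.
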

\begin{proof}
As $\ker s \not= \ker L(x)$, therefore rank $1$ linearized polynomial should be of the form $L(x) = \alpha (x^q + \delta_jx)$ with $\alpha \in \mathbb{F}_{q^2}$ and $\delta_j \not= \delta_i$, where $\delta_j$ is also a $(q+1)$-th root of unity. The number of choices for $\delta_j$ is $q$. As ${\rm im}\, s^m \not= {\rm im}\, L(x)$, therefore the number of choices of $\alpha$ is $q^2-q$. Thus the total number of normalized PPs of the kind $s^m + L(x)$ where $L(x)$ is rank $1$ and $m>1$ coprime to $q-1$, is $q^2(q-1)$. 
\end{proof}
The number of $m$ that are coprime to $q-1$ is given by Euler's Totient function $\phi$. As $\phi$ also counts $1$ as coprime to any $q-1$, the number of indices $m$ that correspond to \cref{cor:coprime_count} is $\phi(q-1)-1$. Thus the total number of normalized PPs arising out of \cref{cor:coprime_count} is $(\phi(q-1)-1)(q+1)q^2(q-1)$ and the total number of associated PPs are $(\phi(q-1)-1)q^4(q-1)^2(q+1)^2$ -- a shade smaller than the number of PPs estimated from \cref{cor:lowerbound}.
\begin{example}
Over $\mathbb{F}_{11^2}$, there are three possible indices for $m$, namely $m=3,7,9$, such that $gcd(m,q-1)=1$ and $m \not = 1$. The number of rank $1$ $L(x)$ that make $s^m+L(x)$ a permutation polynomial (for a specific $s$ and $m$) is $q^2(q-1)=1210$. The total number of normalized PPs obtained via this construction is dictated by the number of distinct $\delta_i$s that make the $s$ (which in this case is $12$) and the number of distinct $m$ (in this case, $\phi(q-1)-1 = \phi(10)-1 = 3$), which amounts to  $12\times3\times1210=43560$. If we count all associated PPs of each of these normalized PPs, then the number of PPs become $43560\times121\times120=632491200$. 
\end{example}
Table~$2$ gives the number of normalized PPs (for some small fields) of the shape $f(x) = s^m + L(x)$ that one can obtain for each $s = x^q + \delta_ix$ and each permissible index $m$ that permits $L(x)$ to be a rank 1 linearized polynomial. \\
\begin{table}[h!]
\begin{center}
\caption{Number of normalized PPs $f(x)=s^m+L(x)$ for one $s=(x^p+\delta_i x)$, one index $m$ and rank $1$ $L(x)$ for small fields}
\vspace{0.2cm}
\label{tab:Table2}
\begin{tabular}{|c|c|c|c|c|c|c|c|}
\hline
Field & $\mathbb{F}_{4^2}$ & $\mathbb{F}_{5^2}$ & $\mathbb{F}_{7^2}$ & $\mathbb{F}_{8^2}$  & $\mathbb{F}_{9^2}$  & $\mathbb{F}_{11^2}$ & $\mathbb{F}_{13^2}$ \\
\hline
indices:  & $2$ & $3$ & $5$ &$2,3,4,5,6$  & $3,5,7$  &  $3,7,9$  & $ 5,7,11$\\
\hline
count: & $48$ & $100$ & $294$ & $448$ & $648$ & $1210$ & $2028$ \\
\hline
\end{tabular}
\end{center}
\end{table}
\\
While \tref{thm:coprime} gives necessary and sufficient conditions for $s^m + L(x)$ to be a normalized PP with $L(x)$ being a rank $1$ linearized polynomial, \tref{thm:lowerbound} only gives us a sufficiency condition for $s^m + L(x)$ to be a normalized PP with $L(x)$ being a rank $2$ linearized polynomial. We now close this gap with the following theorem. Once again we let $s = x^q + \delta_ix$, where $\delta_i$ is a $(q+1)$-th root of unity.
\begin{theorem}\label{thm:Fp_to_Fpsquare}
If $x^m + \gamma x$ (for $m>1$) is a permutation polynomial of $\mathbb{F}_q$ for some $\gamma \in \mathbb{F}_q$, then there exist normalized PPs of the form $s^m + L(x)$  where $L(x)$ is a rank $2$ linearized polynomial that does not map the kernel of $s$ to the subspace containing the image of $s^m$.

If there are $k$ distinct $\gamma \in \mathbb{F}_q$ such that $x^m + \gamma x$ is a PP of $\mathbb{F}_q$, then there are $kq^2(q-1)$ distinct rank $2$ linearized polynomials $L(x)$ that do not map the kernel of $s$ to the subspace containing the image of $s^m$, such that $s^m + L(x)$ are normalized PPs of $\mathbb{F}_{q^2}$.
 \end{theorem}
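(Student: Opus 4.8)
The plan is to build directly on the frame used in the proof of \tref{thm:lowerbound} and then pass to a second, adapted $\mathbb{F}_q$-basis that makes the evaluation of $f$ triangular. Fix $u\in\ker s$, extend to a basis $\{u,v\}$ of $\mathbb{F}_{q^2}$ over $\mathbb{F}_q$, and put $z=s^m(v)$; then, exactly as in \tref{thm:lowerbound}, $f(\alpha u+\beta v)=\beta^m z+\alpha L(u)+\beta L(v)$ for $\alpha,\beta\in\mathbb{F}_q$, and the smallest subspace containing ${\rm im}\,s^m$ is the line $\langle z\rangle$. The hypothesis that the rank $2$ map $L$ does \emph{not} send $\ker s$ into this line means exactly that $L(u)\notin\langle z\rangle$, so $\{z,L(u)\}$ is again an $\mathbb{F}_q$-basis of $\mathbb{F}_{q^2}$; writing $L(v)=\gamma z+\eta L(u)$ with $\gamma,\eta\in\mathbb{F}_q$ yields the normal form
\[
f(\alpha u+\beta v)=(\beta^m+\gamma\beta)\,z+(\alpha+\eta\beta)\,L(u).
\]

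Next I would read off the two conditions we need from this normal form. In the coordinates provided by $\{z,L(u)\}$ the evaluation of $f$ is the map $(\alpha,\beta)\mapsto(\beta^m+\gamma\beta,\ \alpha+\eta\beta)$ on $\mathbb{F}_q^2$; since $\alpha$ is uniquely recovered from the second coordinate once $\beta$ is fixed, this is a bijection of $\mathbb{F}_{q^2}$ if and only if $\beta\mapsto\beta^m+\gamma\beta$ is a bijection of $\mathbb{F}_q$, i.e.\ if and only if $x^m+\gamma x$ is a PP of $\mathbb{F}_q$. Separately, $L$ has rank $2$ precisely when $\{L(u),L(v)\}=\{L(u),\gamma z+\eta L(u)\}$ is linearly independent, that is, precisely when $\gamma\neq0$. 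So a rank $2$ linearized polynomial $L$ not mapping $\ker s$ into $\langle z\rangle$ makes $s^m+L(x)$ a normalized PP exactly when the $z$-component $\gamma$ of $L(v)$ is nonzero and $x^m+\gamma x$ permutes $\mathbb{F}_q$. (Thus $\gamma=0$ is necessarily excluded here; that case can occur only when $\gcd(m,q-1)=1$, corresponds to rank $1$ maps, and is already covered by \tref{thm:coprime}.) For the existence claim: given any $\gamma$ with $x^m+\gamma x$ a PP of $\mathbb{F}_q$ (necessarily $\gamma\neq0$), pick any $L(u)\notin\langle z\rangle$, set $\eta=0$, and the resulting $L$ has rank $2$, does not map $\ker s$ into ${\rm im}\,s^m$, and makes $s^m+L(x)$ a normalized PP.

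For the count, I would use that an $\mathbb{F}_q$-linear map $L$ on $\mathbb{F}_{q^2}$ is determined by $(L(u),L(v))$, and that $(L(u),L(v))\leftrightarrow(L(u),\gamma,\eta)$ via $L(v)=\gamma z+\eta L(u)$ is a bijection whenever $L(u)\notin\langle z\rangle$. Hence the admissible $L$ are parametrized by: $L(u)$ ranging over $\mathbb{F}_{q^2}\setminus\langle z\rangle$, of which there are $q^2-q$; the $z$-component $\gamma$ ranging over the $k$ nonzero elements of $\mathbb{F}_q$ for which $x^m+\gamma x$ is a PP of $\mathbb{F}_q$; and $\eta$ ranging over $\mathbb{F}_q$. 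Distinct triples $(L(u),\gamma,\eta)$ produce distinct maps $L$, each such $L$ has rank $2$ and does not map $\ker s$ into ${\rm im}\,s^m$, each gives a normalized PP $s^m+L(x)$ by the previous step, and conversely every such $L$ arises in this way; therefore the total is exactly $k\,q^2(q-1)$.

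The only genuinely delicate ingredient is the change of basis to $\{z,L(u)\}$: this is what decouples the problem into the scalar permutation condition ``$x^m+\gamma x$ permutes $\mathbb{F}_q$'' together with one free triangular parameter $\eta$. Once the frame is in place the permutation test, the rank test, and the enumeration are all routine; the bookkeeping that needs care is checking that $L\leftrightarrow(L(u),\gamma,\eta)$ is a genuine bijection, so the count is exact rather than an upper bound, and keeping track of the side condition $\gamma\neq0$, which is exactly what separates this rank $2$ family from the rank $1$ family of \tref{thm:coprime}.
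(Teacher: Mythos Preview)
Your argument is correct and, at its core, matches the paper's: both proofs hinge on the observation that once $L(u)\notin\langle z\rangle$, the pair $\{z,L(u)\}$ is an $\mathbb{F}_q$-basis of $\mathbb{F}_{q^2}$, and in those coordinates $f$ becomes the triangular map $(\alpha,\beta)\mapsto(\beta^m+\gamma\beta,\ \alpha+\eta\beta)$, reducing the permutation question on $\mathbb{F}_{q^2}$ to the base-field question of whether $x^m+\gamma x$ permutes $\mathbb{F}_q$.

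The organizational difference is in the parametrization of $L$. The paper first normalizes so that $L(b)=z$ for some $b=\alpha u+v$, then obtains the desired linearized polynomial as a scalar multiple $\gamma L$; the counting is by triples $(\alpha,\,L(u),\,\gamma)$. You instead parametrize $L$ directly by the triple $(L(u),\gamma,\eta)$ via $L(v)=\gamma z+\eta L(u)$. Your route has two small advantages: (i) the bijection $L\leftrightarrow(L(u),\gamma,\eta)$ is manifest, so the count $k\,q^2(q-1)$ is visibly exact without a separate check that the paper's parametrization by $(\alpha,L(u),\gamma)$ introduces no overcounting; and (ii) you obtain the converse for free, namely that for rank $2$ maps with $L(u)\notin\langle z\rangle$, $s^m+L(x)$ is a PP \emph{iff} $x^m+\gamma x$ permutes $\mathbb{F}_q$, a fact the paper only establishes later in the general \tref{thm:allrnk2}. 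The paper's formulation, on the other hand, makes the dependence on the base-field binomial $x^m+\gamma x$ a bit more explicit by literally inserting $\gamma$ as a scalar in front of $L$.
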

\begin{proof}
Like in the proof of \tref{thm:lowerbound}, let us consider $\{u,v\}$ to be a basis of $\mathbb{F}_{q^2}$ such that $u \in \ker s$. Let $s^m(v) = z \in \mathbb{F}_{q^2}$ and therefore all elements in ${\rm im}\, s^m$ are given by $\beta^m z$ for $\beta \in \mathbb{F}_q$.

\tref{thm:lowerbound} takes care of rank $2$ linearized polynomials that map $\ker s$ to the subspace containing ${\rm im}\, s^m$. Let us therefore consider a rank $2$ linearized polynomial $L(x)$ such that $L(u)$ does not belong to the span of ${\rm im}\, s^m$. Let $\alpha u + v = b \in \mathbb{F}_{q^2}$ (with $\alpha \in \mathbb{F}_q$) and $L(x)$ be such that $L(b) = z$. Note that $s^m(b) = s^m(\alpha u + v) = s^m(v) = z$. Then we claim that $f = s^m + \gamma L(x)$ is a permutation polynomial of $\mathbb{F}_{q^2}$ if $x^m + \gamma x$ is a permutation polynomial of $\mathbb{F}_q$.

It is enough to show that $f(w_1) \not= f(w_2)$ whenever $w_1 \not= w_2$, with $w_1, w_2 \in \mathbb{F}_{q^2}$. Let $w_i = \alpha_i u + \beta_i v$ with $\alpha_i, \beta_i \in \mathbb{F}_q$ for $i = 1,2$. We can re-write this as $w_i = (\alpha_i - \beta_i \alpha)u + \beta_i b$. Now, \begin{eqnarray*} f(w_i) & = & s^m(w_i) + \gamma L(w_i) \\ &  = & \beta_i^m z + \gamma (\alpha_i - \beta_i \alpha)L(u) + \gamma \beta_i L(b) \\ & = & (\beta_i^m + \gamma \beta_i)z + \gamma(\alpha_i - \beta_i\alpha)L(u) \end{eqnarray*} 
As $L(x)$ is a rank $2$ linear map, the vectors $z, L(u)$ are linearly independent and so form a basis for $\mathbb{F}_{q^2}$. Now if $f(w_1) = f(w_2)$, then $\beta_1^m + \gamma \beta_1  = \beta_2^m + \gamma \beta_2$ and $\gamma(\alpha_1 - \beta_1\alpha) = \gamma(\alpha_2 - \beta_2\alpha)$. As $x^m + \gamma x$ is a permutation polynomial of $\mathbb{F}_q$, the first equality forces $\beta_1 = \beta_2$ and using this in the second equality gives $\alpha_1 = \alpha_2$. This concludes the proof of the claim that $f = s^m + \gamma L(x)$ is a permutation polynomial of $\mathbb{F}_{q^2}$.

For counting the number of normalized PPs of this kind, note that there are $q$ vectors of the form $b = \alpha u + v$. We now consider the choice for the linear map $L(x)$. If $L(b) = z$, then it is enough to choose a vector for $L(u)$ from the $q^2 - q$ vectors linearly independent of $z$ to get a rank $2$ linearized polynomial $L(x)$. Thus, there are a possible $q^2(q-1)$ choices available for $L(x)$ that can form a permutation polynomial of the shape  $s^m + \gamma L(x)$ for every $\gamma \in \mathbb{F}_q$ such that $x^m + \gamma x$ is a PP of $\mathbb{F}_q$. Thus, the total number of distinct rank $2$ linearized polynomials $L(x)$  that do not map the kernel of $s$ to the span of image of $s^m$ and form a normalized PP of the shape $s^m + L(x)$ is $kq^2(q-1)$ if there are $k$ distinct $\gamma \in \mathbb{F}_q$ such that $x^m + \gamma x$ is a PP of $\mathbb{F}_q$.
\end{proof}
The upper bound on the number of permutation polynomials of the shape $s^m + L(x)$ over $\mathbb{F}_{q^2}$ is dependent on the number of permutation polynomials of $\mathbb{F}_q$ that are binomials of the form $x^m + \gamma x$. We list a few examples of $\mathbb{F}_{q^2}$ and $m$ where such PPs arise from permutation binomials of the base field. 
\begin{example}\label{ex:add_count}

\begin{enumerate}
\item Over $\mathbb{F}_{7}$ there are two degree $4$ permutation polynomials of the form $x^4+\gamma x$ and they are $x^4+3x$ and $x^4+4x$. Corresponding to these two base-field permutations, there are $2\times7^2\times6 = 588$ additional normalized PPs having the shape $s^4 + L(x)$ for each $s = x^7 +\delta_ix$ in $\mathbb{F}_{7^2}$. These normalized PPs $s^4 + L(x)$ are in addition to the ones stated in \tref{thm:lowerbound} which take $ker(s)$ to span of $im(s^m)$. This explains the count of $840$ in Table $1$ for the case of $m=4$. 
\item Over $\mathbb{F}_9$, there are $4$ degree $3$ and $2$ degree $5$ permutation binomials. This explains the additional count of $4\times9^2\times8=2592$ for $m=3$ case in Table $1$. For $m=5$, additional number is $2\times9^2\times8=1296$.
\item While in $\mathbb{F}_{19}$, there are $3$ degree $7$, $8$ degree $10$ and $3$ degree $13$ permutation polynomials of the form $x^m+\gamma x$. The number of additional  normalized permutation polynomials in $\mathbb{F}_{19^2}$ contributed by these permutation polynomials of the base field are 
\begin{eqnarray}
s^7 +L(x):~~~ 3\times19^2\times18=19494 \nonumber\\
s^{10}+L(x):~~~8\times19^2\times18=51984 \nonumber \\
s^{13}+L(x): ~~~3\times19^2\times18=19494 \nonumber
\end{eqnarray}
\end{enumerate}
\end{example}
It would be interesting to find out for which indices $m$ can a binomial of the form $x^m + \gamma x$ be a permutation polynomial of $\mathbb{F}_q$. A great deal of analysis on permutation binomials of the type $f=x^m+\gamma x^n$ with some conditions on $m$ and $n$ with respect to the cardinality of the field is given in [\cite{masuda2009permutation}]. The paper describes many cases of this general class of `permutation binomials' and specifies conditions that the indices $m$ and $n$ must satisfy if $f$ is a permutation binomial of $\mathbb{F}_q$. 

We have observed that for fields of odd characteristic, there were permutation polynomials of the form $x^m + \gamma x$ for $m= (q+1)/2$ (this was not observed in $\mathbb{F}_3$ and $\mathbb{F}_5$).  
This observation coincided with Theorem $1$ in  \cite{carlitz62} which gave conditions that the element $\gamma$ should satisfy in order that $f(x)=x^{(q+1)/2}+\gamma x$ is a permutation binomial of $\mathbb{F}_q$. We paraphrase that theorem below.

\begin{theorem}\label{thm:carlitz_Fp} \cite{carlitz62}: The polynomial
\[
f(x)=x^{m}+\gamma x ~~~~~m = (q+1)/2
\]
with $\gamma$ defined by
\[
\gamma=\frac{c^2+1}{c^2-1}
\]
for $c^2\neq \pm 1$ or $0$; is a permutation polynomial of $GF(q)$ provided $q\geq 7$. However, it is not a permutation polynomial of $GF(q^r)$ for $r>1$.
\end{theorem}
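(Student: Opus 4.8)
The plan is to treat the two assertions separately, both of them resting on a single elementary identity. For $x\in\mathbb{F}_q^*$ one has $x^{(q+1)/2}=x\cdot x^{(q-1)/2}=\chi(x)\,x$, where $\chi$ is the quadratic character of $\mathbb{F}_q$, while $f(0)=0$; hence on $\mathbb{F}_q$ the map $f(x)=x^{(q+1)/2}+\gamma x$ is simply $x\mapsto(\gamma+\chi(x))x$, sending each nonzero square $a$ to $(\gamma+1)a$ and each non-square $b$ to $(\gamma-1)b$. Everything for the base field flows from this description.

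For the permutation statement over $\mathbb{F}_q$: first, $\gamma\ne\pm1$ is forced, else an entire coset collapses to $0$. Granting this, $f$ permutes $\mathbb{F}_q^*$ iff the two sets $(\gamma+1)\cdot\mathrm{QR}$ and $(\gamma-1)\cdot\mathrm{QNR}$ (the nonzero squares and the non-squares, each of size $(q-1)/2$) partition $\mathbb{F}_q^*$; since each of these sets equals $\mathrm{QR}$ or $\mathrm{QNR}$ according to the character of its scalar, the partition happens exactly when $\chi(\gamma+1)=\chi(\gamma-1)$, i.e. when $\gamma^2-1$ is a nonzero square in $\mathbb{F}_q$. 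I would then plug in the prescribed $\gamma=(c^2+1)/(c^2-1)$: a one-line computation gives $\gamma^2-1=\bigl(2c/(c^2-1)\bigr)^2$, a nonzero square because $c\ne0$, while $\gamma=\pm1$ is impossible. Thus $f$ permutes $\mathbb{F}_q$ for every admissible $\gamma$, and a quick count of the nonzero squares avoiding $\{1,-1\}$ shows such $c$ exist precisely when $q\geq7$, which is the origin of that hypothesis.

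For the claim that $f$ permutes no $\mathbb{F}_{q^r}$ with $r>1$, my main tool is Hermite's criterion: if $f$ is a PP of $\mathbb{F}_{q^r}$ then for every $t$ with $1\le t\le q^r-2$ and $p\nmid t$ the reduction of $f(x)^t$ modulo $x^{q^r}-x$ has degree at most $q^r-2$. Writing $f(x)^t=\sum_j\binom{t}{j}\gamma^{\,t-j}x^{\,t+j(q-1)/2}$: when $r$ is even, $q+1\mid q^r-1$, so $t_0:=\tfrac{2(q^r-1)}{q+1}$ is an integer in $[1,q^r-2]$ with $p\nmid t_0$; the exponents of $f^{t_0}$ fill $[\,t_0,\ t_0(q+1)/2\,]=[\,t_0,\ q^r-1\,]$, whose only multiple of $q^r-1$ is the right endpoint, attained solely by the top term $j=t_0$ with coefficient $1$. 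Hence $f^{t_0}\bmod(x^{q^r}-x)$ has degree $q^r-1$, and $f$ is not a PP of $\mathbb{F}_{q^r}$ for any even $r$. Because $f$ has coefficients in $\mathbb{F}_q$ we also have $f(\mathbb{F}_{q^d})\subseteq\mathbb{F}_{q^d}$ for every $d$, so a PP of $\mathbb{F}_{q^r}$ restricts to a PP of $\mathbb{F}_{q^d}$ for each $d\mid r$; since every $r>1$ has a prime divisor and the even case is settled, it remains only to rule out $\mathbb{F}_{q^\ell}$ for odd primes $\ell$.

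The odd-prime case will be the hard part. There $q+1\nmid q^\ell-1$, and one checks that $(q-1)/2$ must divide any $t$ for which some exponent of $f^t$ is a multiple of $q^\ell-1$; for such $t$ several $j$'s can collapse onto $x^{q^\ell-1}$ at once, so the coefficient of that term becomes a sum $\sum_j\binom{t}{j}\gamma^{\,t-j}$ whose non-vanishing has to be extracted from the special shape of $\gamma$ — indeed for $\ell=1$ that sum is a multiple of $\chi(\gamma+1)-\chi(\gamma-1)=0$, consistent with $f$ permuting $\mathbb{F}_q$, so the argument must genuinely exploit $\ell>1$. I would try to isolate a single contributing $j$ by taking $t$ in a suitable window (e.g.\ $t=\tfrac{q-1}{2}s$ with $s$ just above $4(q^\ell-1)/(q^2-1)$), reducing the matter to a Kummer/Lucas check that $p\nmid\binom{t}{j}$; lacking a clean uniform statement, one can argue directly by exhibiting a collision $f(x)=f(y)$ inside $\mathbb{F}_{q^2}\setminus\mathbb{F}_q$ — when $1+\gamma^2$ is a non-square in $\mathbb{F}_q$ one takes $y=x^q$ (equivalently, looks for $x\notin\mathbb{F}_q$ with $f(x)\in\mathbb{F}_q$) and solves $\gamma z^2-2z-\gamma=0$ for $z=x^{(q-1)/2}$, whose roots have norm $-1$ over $\mathbb{F}_q$ and hence lie in the image of $x\mapsto x^{(q-1)/2}$, while when $1+\gamma^2$ is a nonzero square one instead produces a square and a non-square of $\mathbb{F}_{q^2}$ with equal images. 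Making this collision (or the coefficient's non-vanishing) work uniformly in $\ell$, including $\ell=p$, is where I expect essentially all the difficulty to lie; the even-$r$ case and the $\mathbb{F}_q$ statement are routine.
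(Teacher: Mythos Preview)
The paper does not give its own proof of this theorem; it is quoted from Carlitz's 1962 paper and is only \emph{used} (in \lref{lem:additinal_count} and the discussion around it), never re-proved. So there is no in-paper argument to compare against, and your proposal must be judged on its own.

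Your treatment of the base-field assertion is correct and standard: the quadratic-character description $f(x)=(\gamma+\chi(x))x$ immediately gives the criterion $\chi(\gamma+1)=\chi(\gamma-1)$, and the identity $\gamma^2-1=\bigl(2c/(c^2-1)\bigr)^{2}$ settles it. Your Hermite argument for even $r$ is also sound: with $t_0=2(q^r-1)/(q+1)$ one has $t_0\equiv -2\pmod p$, hence $p\nmid t_0$ in odd characteristic, and since the exponents $t_0+j(q-1)/2$ are strictly increasing in $j$ with maximum $q^r-1$ attained only at $j=t_0$, the coefficient of $x^{q^r-1}$ in $f^{t_0}$ is $1$. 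The subfield-restriction reduction to prime degree is fine.

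The genuine gap is the odd-prime case, and your proposed fallback does not close it. A collision in $\mathbb{F}_{q^2}\setminus\mathbb{F}_q$ says nothing about injectivity on $\mathbb{F}_{q^\ell}$ for odd $\ell$, because $\mathbb{F}_{q^2}\not\subset\mathbb{F}_{q^\ell}$; you would need to produce a collision (or a Hermite violation) inside $\mathbb{F}_{q^\ell}$ itself. The Hermite idea with $t=\tfrac{q-1}{2}s$ and a Lucas-type binomial analysis is the right direction, but you have not carried it through, and the case $\ell=p$ that you flag is indeed where naive choices of $t$ can fail. As written, the second assertion of the theorem remains unproved in your proposal.
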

One can count the number of permutation binomials of the form $f(x)=x^{(q+1)/2}+\gamma x$ using conditions given in \tref{thm:carlitz_Fp}. Every field $\mathbb{F}_q$ of odd characteristic has $(q-1)/2$ nonzero squares. Whether $-1$ is a square in the field $\mathbb{F}_q$ depends upon the nature of $q$. This gives a handle to count the number of normalized PPs of the form $s^m + L(x)$ for $m = (q+1)/2$.
\begin{lemma}\label{lem:additinal_count}
The number of additional permutation polynomials of the type $f(x)=(x^q+\delta_ix)^{(q+1)/2}+ L(x)$, as stated in \tref{thm:Fp_to_Fpsquare} is $Nq^2(q-1)$, where 
\begin{itemize}
\item $N=\frac{q-3}{2}$ for $q = 4n+3$ and
\item $N=\frac{q-5}{2}$ for $q = 4n+1$
\end{itemize}
\end{lemma}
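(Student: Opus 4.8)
The plan is to reduce everything to counting the number $k$ of field elements $\gamma \in \mathbb{F}_q$ for which the binomial $x^{(q+1)/2}+\gamma x$ permutes $\mathbb{F}_q$, and then to evaluate $k$ by means of \tref{thm:carlitz_Fp}. Indeed, \tref{thm:Fp_to_Fpsquare} already asserts that the number of rank $2$ linearized polynomials $L(x)$ which fail to take $\ker s$ into the span of ${\rm im}\, s^m$ and yet make $s^{(q+1)/2}+L(x)$ a normalized PP of $\mathbb{F}_{q^2}$ is exactly $kq^2(q-1)$, where $k$ is the number of distinct $\gamma \in \mathbb{F}_q$ with $x^{(q+1)/2}+\gamma x$ a PP of $\mathbb{F}_q$. (Distinctness of the resulting $L(x)$ across different $\gamma$ is already built into \tref{thm:Fp_to_Fpsquare}.) So it remains to show $k = N$, with $N$ as in the statement.

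By \tref{thm:carlitz_Fp}, for $q \geq 7$ the binomial $x^{(q+1)/2}+\gamma x$ is a PP of $\mathbb{F}_q$ precisely for those $\gamma$ of the form $\gamma = \frac{c^2+1}{c^2-1}$ with $c \in \mathbb{F}_q$ and $c^2 \neq 0,\pm 1$. Writing $t = c^2$, the value of $\gamma$ depends only on $t$, and as $c$ ranges over the admissible elements, $t$ ranges over the set $S$ of nonzero squares of $\mathbb{F}_q$ with $t \neq 1$ and $t \neq -1$. Thus $k$ equals the number of distinct values of the map $t \mapsto \frac{t+1}{t-1}$ on $S$. The first step is to verify that this map is injective on $S$: from $\frac{t+1}{t-1} = \frac{t'+1}{t'-1}$, cross-multiplying and simplifying (using that $q$ is odd) forces $t = t'$. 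Hence $k = |S|$.

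The last step is to count $|S|$. The group $\mathbb{F}_q^*$ has exactly $(q-1)/2$ squares; the element $1$ is always a square and is excluded, while $-1$ lies in $S$ only if it is a square, which happens precisely when $q \equiv 1 \pmod 4$. Therefore, for $q = 4n+3$ the only exclusion is $t=1$, giving $|S| = \frac{q-1}{2} - 1 = \frac{q-3}{2}$; for $q = 4n+1$ both $1$ and $-1$ are squares and must be removed, giving $|S| = \frac{q-1}{2} - 2 = \frac{q-5}{2}$. This gives $k = N$ and hence the additional count $Nq^2(q-1)$. The only genuinely delicate point is the logical strength of \tref{thm:carlitz_Fp}: the paraphrase records a sufficient condition, whereas the exact count requires that \emph{all} admissible $\gamma$ arise this way, so I would invoke the fact that Carlitz's result is in fact a characterization (necessary as well as sufficient); the injectivity check and the two-to-one passage $\pm c \mapsto c^2$ are routine.
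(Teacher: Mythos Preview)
Your argument is correct and follows essentially the same route as the paper's own proof: reduce via \tref{thm:Fp_to_Fpsquare} to counting the admissible $\gamma$, parametrize $\gamma$ by $t=c^2$ using Carlitz's theorem, and then count the nonzero squares $t\neq\pm1$ according to whether $-1$ is a square in $\mathbb{F}_q$. Your version is in fact more scrupulous than the paper's on two points it leaves implicit: you verify that $t\mapsto(t+1)/(t-1)$ is injective (so distinct $t$ give distinct $\gamma$), and you flag that the exact count needs Carlitz's condition to be necessary as well as sufficient.
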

\begin{proof}
 Note that \tref{thm:carlitz_Fp} gives the formula for calculating  $\gamma$ such that $x^{(q+1)/2} + \gamma x$ is a PP over $\mathbb{F}_q$. We ought to count the number of squares in $\mathbb{F}_q$ which are not equal to $0, \pm 1$. While $0, 1$ are squares in all $\mathbb{F}_q$, $-1$ is a square in those $\mathbb{F}_q$, where $q = 4n+1$. Thus, for $\mathbb{F}_q$, where $q = 4n+1$, the number of squares not equal to $0, \pm 1$ is $\frac{q-5}{2}$ (as every square has two square roots, except $0$ which has only one). For $q = 4n + 3$, as $-1$ is not a square, the number of squares not equal to $0,1$ is $\frac{q-3}{2}$. This now gives us $N$, the number of distinct $\gamma \in \mathbb{F}_q$ such that $x^{(q+1)/2} + \gamma x$ is a PP of $\mathbb{F}_q$. From \tref{thm:Fp_to_Fpsquare}, it is clear that the number of additional normalized PPs of the type $f(x) = (x^q + \delta_i x)^{(q+1)/2} + L(x)$ is $Nq^2(q-1)$. 
\end{proof}

We end this section by listing the counts of all normalized polynomials of the kind $s^m+L(x)$, for different values of index $m$; for small fields of even and odd characteristic. 

\begin{table}[!ht]
\begin{center}
\caption{Total Number of Normalized PPs of the shape  $f(x)=s^m+L(x)$ for small fields (counting for all possible choice of  $\delta_i$s and both rank $1$ and rank $2$ $L(x)$)}
\vspace{0.2cm}
\label{tab:Table3}
\begin{tabular}{|c|c|c|c|c|c|c|c|}
\hline
Field & $\mathbb{F}_{4^2}$ & $\mathbb{F}_{5^2}$ & $\mathbb{F}_{7^2}$ & $\mathbb{F}_{8^2}$  & $\mathbb{F}_{9^2}$  & $\mathbb{F}_{11^2}$ & $\mathbb{F}_{13^2}$ \\
\hline
$index~m=2$ & $\cellcolor{red!30}420$ & $480$ & $2016$  & $\cellcolor{red!30}7560$ & $5760$ & $13200$ & $26208 $ \\
\hline
$index~m=3$ & $180$ & $\cellcolor{red!30}1080$ & $2016$ & $\cellcolor{red!30}7560$ & $\cellcolor{green!30}38160$ & $\cellcolor{red!30}27720$ & $26208$\\
\hline
$index~m=4$ & $--$ & $480 $ & $\cellcolor{yellow!30}6720$ & $\cellcolor{red!30}7560$ & $5760$ & $13200$  & $26208$\\
\hline
$index~m=5$ & $--$ & $--$ & $\cellcolor{red!30}4368$ & $\cellcolor{red!30}7560$ & $\cellcolor{green!30}25200$ & $13200$  & $\cellcolor{red!30}54600$\\
\hline
$index~m=6$ & $--$ & $--$ & $2016$ & $\cellcolor{red!30}7560$ & $5760$ & $\cellcolor{yellow!30}71200$  & $26208$\\
\hline
$index~m=7$ & $--$ &$--$ & $--$ &  $3528$ & $\cellcolor{red!30}12240$ & $\cellcolor{red!30}27720$  & $\cellcolor{green!30}168168$ \\
\hline
$index~m=8$ & $--$ &$--$ & $--$ & $--$ & $5760$ & $13200$ & $26208$\\
\hline
$index~m=9$ & $--$ &$--$ &$--$  & $--$ & $--$ & $\cellcolor{red!30}27720$ & $26208$\\
\hline
$index~m=10$ & $--$ & $--$ & $--$ & $--$ & $--$ & $13200$ & $26208$  \\
\hline
$index~m=11$ & $--$ & $--$ & $--$ & $--$ & $--$ & $--$ & $\cellcolor{red!30}54600$\\
\hline
$index~m=12$ & $--$ &$--$ & $--$ & $--$ & $--$ & $--$ & $26208$\\
\hline
\end{tabular}
\end{center}
\end{table}
Note that the normalized PPs arising out of  \tref{thm:lowerbound} is applicable to every relevant box in Table~$3$ ($m < q$). The cases where \tref{thm:coprime} is also applicable are indicated in red. Cases where only \tref{thm:Fp_to_Fpsquare} is applicable in addition to \tref{thm:lowerbound} are indicated in  yellow. The boxes which are green represent cases where all three   \tref{thm:lowerbound}, \tref{thm:coprime} and \tref{thm:Fp_to_Fpsquare} are applicable.
\section{Compositional Inverses of PPs $f=s^m+L(x)$}
Polynomials of a given field, which have compositional inverse (as defined in Section $2$) are permutation polynomials.   Finding compositional inverses of permutation polynomials is not a straightforward task and one needs to employ some techniques like those provided in \cite{wangcomp2019}. However, as it turns out, if one can find the compositional inverse of a normalized PP, then one can utilize this compositional inverse to find the compositional inverses of every PP in the family associated with that normalized polynomial. Explicitly, if $f$ is a normalized PP with a compositional inverse $h$, then the compositional inverse of the PP $\alpha f + \beta$ (with $\alpha \in \mathbb{F}^*_{q^2}, \beta \in \mathbb{F}_{q^2}$) is given by $h(\frac{x-\beta}{\alpha})$. The compositional inverses of most  permutation polynomials that have the shape $f = s^m + L(x)$ where $s = x^q + \delta_ix$ with $\delta_i$ being a $(q+1)$-th root of unity and $L(x)$ being a linearized polynomial, turns out to have a similar shape. In fact, there is more granularity in the last statement, which is elaborated in this section. It turns out that all permutation polynomials that belong to the families of normalized PPs generated by \tref{thm:lowerbound} have compositional inverses that belong to families of normalized PPs that are also generated by \tref{thm:lowerbound}. Similarly, the PPs belonging to families of normalized PPs generated by \tref{thm:coprime} are closed under compositional inverses. On the other hand, PPs belonging to the families of normalized PPs generated by \tref{thm:Fp_to_Fpsquare} may have inverses whose shapes are different. We now describe how these compositional inverses can be determined.  

\begin{lemma}\label{lem:cat1inv}
Let $f(x) =(x^q+\delta_ix)^m+ L(x)$ be a PP of $\mathbb{F}_{q^2}$ such that $L(x)$ is a rank $2$ linearized polynomial that maps the kernel of $(x^q + \delta_ix)^m$ to the subspace containing the image of $(x^q + \delta_ix)^m$. Then $h(x)=\eta (x^q+\delta_j x)^m + M(x)$ is the compositional inverse of $f$, where the rank $2$ linearized polynomial $M$ is the compositional inverse of $L$.
\end{lemma}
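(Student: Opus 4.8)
The plan is to work in a convenient $\mathbb{F}_q$-basis of $\mathbb{F}_{q^2}$, exactly as in the proof of \tref{thm:lowerbound}, and read off the inverse map directly. Write $s = x^q + \delta_ix$. First I would fix a basis $\{u,v\}$ of $\mathbb{F}_{q^2}$ over $\mathbb{F}_q$ with $u$ spanning $\ker s$, and put $z = s^m(v)$, so that $s^m$ sends $w = \alpha u + \beta v$ to $\beta^m z$. The hypothesis on $L$ says precisely that $L(u) = \gamma z$ for some $\gamma \in \mathbb{F}_q$; since $L$ has rank $2$ this $\gamma$ is nonzero, and $\{z, L(v)\}$ is again a basis of $\mathbb{F}_{q^2}$ (the image of the basis $\{\gamma z, L(v)\}$ under the bijective map $L$). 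Evaluating $f$ on $w = \alpha u + \beta v$ then gives $f(w) = (\beta^m + \alpha\gamma)z + \beta L(v)$.

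Next I would invert this explicitly. Writing an arbitrary $y \in \mathbb{F}_{q^2}$ as $y = c_1 z + c_2 L(v)$ with $c_1, c_2 \in \mathbb{F}_q$, the unique preimage under $f$ is $w = \tfrac{c_1 - c_2^m}{\gamma}\,u + c_2 v$. Let $M = L^{-1}$, which exists and is again a rank $2$ linearized polynomial by \lref{lem:lin_comp}. From $L(u) = \gamma z$ one gets $M(z) = \gamma^{-1}u$, and directly $M(L(v)) = v$, hence $M(y) = \gamma^{-1}c_1\,u + c_2 v$. Comparing, the preimage equals $M(y) - \gamma^{-1}c_2^m\,u$, so the whole content of the lemma reduces to exhibiting the correction term $y \mapsto -\gamma^{-1}c_2^m\,u$ in the form $\eta\,(x^q+\delta_jx)^m$ for a suitable $(q+1)$-th root of unity $\delta_j$ and a suitable $\eta \in \mathbb{F}_{q^2}$.

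For that, by \lref{lem:rank1_L} there is a unique monic rank $1$ linearized polynomial $t(x) = x^q + \delta_j x$ whose kernel is the line spanned by $z$ (the $q+1$ monic rank $1$ linearized polynomials realize the $q+1$ distinct lines through the origin as their kernels). Since $t(z) = 0$ and $\{z, L(v)\}$ is a basis, $t(y) = c_2\, t(L(v))$ with $t(L(v)) \ne 0$, so $t^m(y) = c_2^m\, t(L(v))^m$; taking $\eta = -u / \bigl(\gamma\, t(L(v))^m\bigr)$ then gives $\eta\, t^m(y) = -\gamma^{-1}c_2^m\,u$ as needed. Hence $h(x) = \eta\,(x^q+\delta_jx)^m + M(x)$ agrees with the inverse map of $f$ at every point of $\mathbb{F}_{q^2}$, and since $\deg h \le qm < q^2$ it is the compositional inverse of $f$; moreover $M = L^{-1}$ is rank $2$ as claimed. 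As a bonus one checks that $M$ sends $\ker t = \operatorname{span}(z)$ onto $\operatorname{span}(u)$, which equals $\operatorname{span}(\operatorname{im} t^m)$ because $u$ is a nonzero multiple of $t(L(v))^m$ by the choice of $\eta$; thus $h$ again falls under \tref{thm:lowerbound}, which is the ``closed under compositional inverses'' assertion for this family.

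The bookkeeping with the two bases $\{u,v\}$ and $\{z, L(v)\}$ is routine; the one step needing care is the identification of the correction term with $\eta\,(x^q+\delta_jx)^m$ — in particular, recognizing that the relevant rank $1$ linearized polynomial is the one whose kernel is $\operatorname{span}(z)$ (not $\ker s = \operatorname{span}(u)$), and checking that $t(L(v)) \ne 0$ so that $\eta$ is well defined. I expect this identification, together with verifying that $M = L^{-1}$ acts as stated on $z$ and on $L(v)$, to be the crux; everything else is a direct computation.
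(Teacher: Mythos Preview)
Your proof is correct and follows essentially the same route as the paper. Both arguments fix the basis $\{u,v\}$ with $u\in\ker s$ and $z=s^m(v)$, take $M=L^{-1}$, pick the rank~$1$ linearized polynomial whose kernel is the line through $z$ (equivalently through $L(u)$, since $L(u)=\gamma z$), and then choose $\eta$ so that $\eta\,t(L(v))^m$ cancels the residual $\beta^m$-term; the only difference is cosmetic---you first invert $f$ explicitly in the $\{z,L(v)\}$-coordinates and then identify the correction term, whereas the paper writes down $h$ and verifies $h(f(w))=w$ directly.
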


\begin{proof}
Consider a basis $\{u,v\}$ of $\mathbb{F}_{q^2}$ where $u \in \ker (x^q + \delta_ix)$. Consider some $ \alpha u + \beta v = w \in \mathbb{F}_{q^2}$. If $s = (x^q + \delta_ix)$, then \[ f(w) = \beta^m s^m(v) +  L(w) = \beta^m z + L(w) \] where $z = s^m(v)$. Now $M(f(w))= \beta^m M(z) + w$, if $M$ is the compositional inverse of $L$. Consider a rank $1$ linearized polynomial $(x^q + \delta_j x)$, such that $L(u) \in \ker (x^q + \delta_jx)$. Note that $L(u), L(v)$ are linearly independent vectors in $\mathbb{F}_{q^2}$ and therefore form a basis for $\mathbb{F}_{q^2}$. Further note that $\alpha_1 L(u) = z$ for some $\alpha_1 \in \mathbb{F}_q$ (due to the construction specified in \tref{thm:lowerbound}). 

Let $b = L(v)^q + \delta_j L(v) \not= 0$. Find $\eta \in \mathbb{F}_{q^2}$  such that $\eta b^m = -M(z)$. Then, we claim that $h = \eta(x^q + \delta_jx)^m + M(x)$ is the compositional inverse of $f = (x^q + \delta_ix)^m + L(x)$. 

Note that if $s_1 = x^q + \delta_jx$, then $s_1(L(u)) = 0$ and $s_1(L(v)) = b$. 
Therefore \begin{eqnarray*}h(f(w)) & = & \eta s_1(f(w))^m + M(f(w)) \\ & = & \eta s_1(\beta L(v))^m + \beta^m M(z) + \alpha u + \beta v \\ & = & \eta \beta^m s_1(L(v))^m + \beta^m M(z) + w \\ & = & \eta \beta^m b^m + \beta^m M(z) + w = w \end{eqnarray*}
Note that as $z = \alpha_1 L(u)$, therefore $M(z) = \alpha_1 u$ and therefore $M$ maps the kernel of $s_1$ to the subspace containing the image of $\eta (x^q + \delta_j x)^m$ and so $h = \eta (x^q + \delta_j x)^m + M(x)$ belongs to the set of PPs generated by \tref{thm:lowerbound}.
\end{proof}

Thus it is clear that the class of permutation polynomials that belong to the families associated to normalized PPs of the type $f(x) = s^m + L(x)$ generated as per the rules of \tref{thm:lowerbound} are closed under compositional inverses. In fact, these PPs can be divided into sub-classes associated to the value of the index $m$ in the range $2 \le m \le q-1$ and every one of these sub-classes are closed under compositional inverses.  \\
\begin{example}
In this example we provide sample PPs for each index $m$ having the shape $f(x)=(x^q+\delta_i x)^m+L(x)$ (\tref{thm:lowerbound}) and their compositional inverses $h(x)=\eta(x^q+\delta_j x)^m+M(x)$ over $\mathbb{F}_{7^2}$. We denote by $a$ a primitive element of $\mathbb{F}_{7^2}$. \\
\begin{enumerate}
    \item $f(x)=(x^7+a^6x)^2+6ax^7+(2a+6)x$ and its compositional inverse is $h(x)=6a(x^7+a^{12}x)^2+ax^7+(5a+1)x$
    \item $f(x)=(x^7+a^6x)^3+(2a+5)x^7+(a+2)x$, with compositional inverse  \\$h(x)=6(x^7+a^6x)^3+6x^7+(2a+2)x$
    \item $f(x)=(x^7+a^{18}x)^4+3x^7+(a+5)x$,\\$h(x)=(6a+1)(x^7+a^6x)^4+6x^7+(2a+2)x$
    \item $f(x)=(x^7+a^{36}x)^5+4ax^7+(2a+4)x$, \\ $h(x)=3(x^7+a^{36}x)^5+ax^7+(6a+3)x$
    \item $f(x)=(x^7+a^{30}x)^6+(4a+5)x^7+(2a+4)x$, \\ $h(x)=(x^7+a^{36}x)^6+(4a+5)x^7+(2a+1)x$
\end{enumerate}
\end{example}

We now look at the compositional inverses of PPs that have a rank $1$ linearized polynomial $L(x)$ as obtained in \tref{thm:coprime}. Note that if $f(x) = s^m + L(x)$ (where both $s = x^q + \delta_i x$ and $L(x)$ are rank one linearized polynomials) is a PP, then by \tref{thm:coprime} $gcd(m,q-1) = 1$. Further, $x^m$ is a PP of $\mathbb{F}_q$. It is well known that the compositional inverse of the PP $x^m$ of $\mathbb{F}_q$ is $x^n$ where $mn \equiv 1 \mod (q-1)$.

\begin{lemma}\label{lem:cat2inv}
Let $f(x) = s^m + L(x)$ (where both $s = x^q + \delta_i x$ and $L(x)$ are rank one linearized polynomials) be a PP of $\mathbb{F}_{q^2}$. 
Then $h(x)=\eta (x^q+\delta_jx)^n + M(x)$ is the compositional inverse of $f(x)$, where $M(x)$ is a rank one linearized polynomial, $\eta \in \mathbb{F}_{q^2}$, $\delta_j$ a $(q+1)$-th root of unity and $n$ such that $mn \equiv 1~\mod~(q-1)$. 
\end{lemma}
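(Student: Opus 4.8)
The plan is to run the argument of \lref{lem:cat1inv}, but adapted to the fact that $L(x)$ is now rank $1$: instead of a basis built from a kernel and an image, I would use a basis built from the two kernels $\ker s$ and $\ker L(x)$. Concretely, fix $u \in \ker s$ and $v \in \ker L(x)$; since $f$ is a PP, \tref{thm:coprime} guarantees $\ker s \ne \ker L(x)$, so $\{u,v\}$ is a basis of $\mathbb{F}_{q^2}$. Writing a general $w = \alpha u + \beta v$ (with $\alpha,\beta \in \mathbb{F}_q$), one gets $s(w) = \beta s(v)$ and $L(w) = \alpha L(u)$, hence
\[
f(w) = \beta^m z + \alpha p, \qquad z := s^m(v),\quad p := L(u).
\]
Here $z \ne 0$ (by the proof of \tref{thm:coprime}, ${\rm im}\, s^m$ has cardinality $q$) and $p \ne 0$ (otherwise $\ker L(x) \supseteq \mathbb{F}_q u = \ker s$). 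The one point that needs a moment's thought is that $z$ and $p$ are $\mathbb{F}_q$-linearly independent: they span ${\rm im}\, s^m$ and ${\rm im}\, L(x)$, which are distinct one-dimensional spaces by \tref{thm:coprime}. Thus $\{z,p\}$ is a basis and $f$ is `diagonal' with respect to the pair $\{u,v\}\mapsto\{z,p\}$.

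Next I would construct $h$. Let $s_1 = x^q + \delta_j x$ be the unique monic rank $1$ linearized polynomial with $p \in \ker s_1$, namely $\delta_j = -p^{q-1}$, which is automatically a $(q+1)$-th root of unity; put $b := s_1(z)$, nonzero because $z \notin \ker s_1 = \mathbb{F}_q p$; let $M(x)$ be the $\mathbb{F}_q$-linear endomorphism of $\mathbb{F}_{q^2}$ (equivalently, the linearized polynomial) determined by $M(z) = 0$ and $M(p) = u$, which is visibly rank $1$; and choose $n$ with $mn \equiv 1 \pmod{q-1}$ and set $\eta := v\,b^{-n}$. The claim is that $h(x) = \eta(x^q + \delta_j x)^n + M(x)$ is the compositional inverse of $f$.

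To verify, evaluate on $w = \alpha u + \beta v$: since $s_1(p)=0$ we have $s_1(f(w)) = \beta^m b$, so $s_1(f(w))^n = \beta^{mn} b^n = \beta b^n$ (using $\beta \in \mathbb{F}_q$ and $mn \equiv 1 \pmod{q-1}$, so that $\beta^{mn}=\beta$), whence $\eta\, s_1(f(w))^n = \beta v$; and $M(f(w)) = \beta^m M(z) + \alpha M(p) = \alpha u$. Adding gives $h(f(w)) = w$ for all $w \in \mathbb{F}_{q^2}$, and since $\mathbb{F}_{q^2}$ is finite and $f$ is a bijection, $h$ is automatically the two-sided compositional inverse of $f$ modulo $x^{q^2}-x$. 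By construction $h$ has exactly the advertised shape, with $M(x)$ rank $1$, $\delta_j$ a $(q+1)$-th root of unity, and $n$ the modular inverse of $m$.

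I expect no serious obstacle: the real content is the choice of the two-kernel basis, which makes $f$ coordinate-wise invertible, together with the linear independence of $z$ and $p$ -- the latter is precisely where the hypotheses of \tref{thm:coprime} ($\,{\rm im}\,s^m \ne {\rm im}\,L(x)$, and $\gcd(m,q-1)=1$ forcing ${\rm im}\,s^m$ to be genuinely one-dimensional$\,$) are used. Everything after that is the bookkeeping above plus the identity $\beta^{mn}=\beta$ on $\mathbb{F}_q$; one may also note in passing that $h$, being a PP of the form $\eta s_1^n + M(x)$ with $M(x)$ rank $1$, itself falls under \tref{thm:coprime}, which is why this class is closed under compositional inversion.
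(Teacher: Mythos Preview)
Your proposal is correct and follows essentially the same approach as the paper: the same two-kernel basis $\{u,v\}$ with $u\in\ker s$, $v\in\ker L(x)$, the same choice of $s_1 = x^q+\delta_jx$ with $L(u)\in\ker s_1$, the same rank $1$ map $M$ determined by $M(z)=0$, $M(L(u))=u$, and the same $\eta$ satisfying $\eta\, s_1(z)^n = v$. You supply a bit more detail than the paper (the explicit justification that $z$ and $p$ are $\mathbb{F}_q$-independent via \tref{thm:coprime}, the formula $\delta_j=-p^{q-1}$, and the two-sided inverse remark), but the argument is the same.
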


\begin{proof}
Consider a basis $\{u,v\}$ of $\mathbb{F}_{q^2}$ such that $s(u) = 0$ and $L(v) = 0$ (one of the conditions in \tref{thm:coprime} ensures that $u,v$ are linearly independent). Consider $f = s^m + L(x)$ and a generic element of  $\mathbb{F}_{q^2} \ni w = \alpha u + \beta v$ where $\alpha, \beta \in \mathbb{F}_q$. \[ f(w) = s^m(w) + L(w) = \beta^m s^m(v) + \alpha L(u) \]
Let $s^m(v) = z$. Take $M$ to be a rank $1$ linearized polynomial such that $z \in \ker M$ and $M(L(u)) = u$. Take $s_j = x^q + \delta_j x$ such that $L(u) \in \ker s_j$. Find $\eta \in \mathbb{F}_{q^2}$ such that $\eta s_j(z)^n = v$ (here $n$ is such that $mn \equiv 1 \mod (q-1)$). We claim that $h(x) = \eta(x^q + \delta_j x)^n + M(x)$ is the compositional inverse of $f(x)$.
\begin{eqnarray*} h(f(w)) = h(\beta^m s^m(v) + \alpha L(u)) & = & h(\beta^m z + \alpha L(u)) \\ \mbox{\rm (considering defined kernels)} & = & \eta s_j^n(\beta^m z) + M(\alpha L(u)) \\ \mbox{\rm (considering the definitions)} & = & \beta^{mn} \eta s_j(z)^n + \alpha M(L(u)) \\ & = & \beta v + \alpha u = w
\end{eqnarray*}
\end{proof}

Clearly, all PPs belonging to the families of normalized PPs generated through application of \tref{thm:coprime} are closed under compositional inverses. Unlike the previous set of PPs generated through \tref{thm:lowerbound} which could be subdivided into sub-classes, each of which were closed under compositional inverses, the set of PPs generated through \tref{thm:coprime} does not have any easily describable sub-classes into which they can be divided. 
\begin{example}
We now provide examples of PPs over $\mathbb{F}_{11^2}$ of the form $f(x) = (x^q + \delta_ix)^m + L(x)$, where $L(x)$ is a rank 1 linearized polynomial and its inverse of the form $h(x) = \eta(x^q + \delta_jx)^n + M(x)$, where $M(x)$ is also a rank 1 linearized polynomial. Note that the indices $m$ and $n$ are such that $mn \equiv 1 \mod (q-1)$. Once again, $a$ represents a primitive element of $\mathbb{F}_{11^2}$.
\begin{enumerate}
    \item $f(x)=(x^{11}+a^{50}x)^3+a^{30}(x^{11}+a^{70}x)$ and \\
    $h(x)=a^{60}(x^{11}+a^{50}x)^7+a^{110}(x^{11}+a^{30}x)$
    \item $f(x)=(x^{11}+a^{10}x)^7+a^{110}(x^{11}+a^{20}x)$ and \\
    $h(x)=a^{74}(x^{11}+a^{60}x)^3+a^{90}(x^{11}+a^{110}x)$
    \item $f(x)=(x^{11}+a^{30}x)^9+a^{51}(x^{11}+a^{110}x)$ and \\
    $h(x)=a^{35}(x^{11}+a^{100}x)^9+a^{48}(x^{11}+a^{30}x)$
\end{enumerate}
\end{example}
Unfortunately, this property of being closed under compositional inverses does not hold for PPs that belong to the families of normalized PPs arising out of \tref{thm:Fp_to_Fpsquare}. However, for odd $q$, there is a subclass of families of normalized PPs arising out of \tref{thm:Fp_to_Fpsquare} which are closed under compositional inverses. These are normalized PPs of the form $f(x) = (x^q + \delta_i x)^{(q+1)/2} + L(x)$where the index $m = \frac{q+1}{2}$.

There are some preliminaries required before the construction of inverses of this class of normalized PPs can be elaborated. Recall \tref{thm:carlitz_Fp} which states how to find $\gamma$ such that $x^{(q+1)/2} + \gamma x$ is a permutation polynomial of $\mathbb{F}_q$. It is clear from the formula given in \tref{thm:carlitz_Fp} that if $x^{(q+1)/2} +\gamma x$ is a PP of $\mathbb{F}_q$, then so is $x^{(q+1)/2} - \gamma x$ (replace $c$ in the formula with $1/c$). 

Further, any polynomial over $\mathbb{F}_q$ of the type $x^{(q+1)/2}+\gamma x$ which is a PP can be better understood by thinking of it as $x(x^{(q-1)/2} + \gamma)$. On $\mathbb{F}_q$, $x^{(q-1)/2}$ takes the value $1$ when $x$ is a square and takes the value $-1$ when $x$ is not a square. Thus the PP $x^{(q+1)/2}+\gamma x$ can be thought of as a permutation obtained by multiplying all the squares by $(\gamma +1)$ and all the nonsquares by $(\gamma -1)$. On any field, the product of two squares is a square, the product of two nonsquares is also a square, whereas the product of a square and a nonsquare is a nonsquare. So, if $x^{(q+1)/2}+\gamma x$ is a PP, then either $(\gamma +1), (\gamma -1)$ are both squares or they are both nonsquares.

\begin{lemma}\label{lem:comp_scalar}
Let $f(x) = \frac{x^m}{\gamma} + x$ be a PP on $\mathbb{F}_q$. Then $h(x) = x^m \mp \gamma x$ is such that $h(f(x)) = \mp \frac{\gamma^2-1}{\gamma}x$ with the choice $-$ or $+$ depending on whether  $\frac{1+\gamma}{\gamma}$ is a square or not.  
\end{lemma}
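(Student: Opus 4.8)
The plan is to exploit the elementary description of $x^{(q+1)/2}$ recalled just before the lemma: writing $m=(q+1)/2$, for every nonzero $x\in\mathbb F_q$ one has $x^{m}=x\cdot x^{(q-1)/2}$, where $x^{(q-1)/2}$ equals $+1$ when $x$ is a nonzero square and $-1$ when $x$ is a nonsquare. First I would rewrite $f(x)=\tfrac1\gamma\bigl(x^{m}+\gamma x\bigr)$, so that $f$ being a permutation polynomial forces $x^{m}+\gamma x$ to be one as well; by the square/nonsquare discussion preceding the lemma this means $\gamma+1$ and $\gamma-1$ are simultaneously squares or simultaneously nonsquares. Consequently $\tfrac{\gamma+1}{\gamma}$ and $\tfrac{\gamma-1}{\gamma}$ share a common quadratic character, which I will denote $\varepsilon\in\{+1,-1\}$; by definition $\varepsilon=+1$ exactly when $\tfrac{1+\gamma}{\gamma}$ is a square in $\mathbb F_q$. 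One also records here that $\gamma\neq\pm1$: if $\gamma=\pm1$ then $f$ collapses all nonsquares or all squares to $0$, contradicting injectivity; alternatively this is immediate from the parametrization of $\gamma$ in \tref{thm:carlitz_Fp}. Hence $\tfrac{\gamma^{2}-1}{\gamma}\neq0$.

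Next I would evaluate $f$ on the three types of elements. Clearly $f(0)=0$; for $x$ a nonzero square $f(x)=\tfrac{\gamma+1}{\gamma}x$, and for $x$ a nonsquare $f(x)=\tfrac{\gamma-1}{\gamma}x$. Using multiplicativity of the quadratic character together with the definition of $\varepsilon$, this gives $f(x)^{(q-1)/2}=\varepsilon$ when $x$ is a nonzero square and $f(x)^{(q-1)/2}=-\varepsilon$ when $x$ is a nonsquare. This character bookkeeping, and the observation that it is exactly here that the hypothesis ``$f$ is a PP'' is consumed, is the main point of the argument; the remaining steps are routine substitutions.

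Finally I would compute $h(f(x))=f(x)^{m}\mp\gamma f(x)=f(x)\bigl(f(x)^{(q-1)/2}\mp\gamma\bigr)$, taking the sign $-$ when $\varepsilon=+1$ and the sign $+$ when $\varepsilon=-1$. In the case $\varepsilon=+1$: for a nonzero square $x$ this is $\tfrac{\gamma+1}{\gamma}x\,(1-\gamma)=-\tfrac{\gamma^{2}-1}{\gamma}x$, and for a nonsquare $x$ it is $\tfrac{\gamma-1}{\gamma}x\,(-1-\gamma)=-\tfrac{\gamma^{2}-1}{\gamma}x$, while $x=0$ is trivial; so $h(f(x))=-\tfrac{\gamma^{2}-1}{\gamma}x$ on all of $\mathbb F_q$. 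In the case $\varepsilon=-1$ the same two substitutions, now with the $+$ sign, both collapse to $+\tfrac{\gamma^{2}-1}{\gamma}x$. This is precisely the claimed identity $h(f(x))=\mp\tfrac{\gamma^{2}-1}{\gamma}x$, with the top sign corresponding to $\tfrac{1+\gamma}{\gamma}$ being a square and the bottom sign to it being a nonsquare; and since $\tfrac{\gamma^{2}-1}{\gamma}\neq0$ this simultaneously exhibits $h$ as a nonzero scalar multiple of the compositional inverse of $f$.
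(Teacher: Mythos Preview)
Your proposal is correct and follows essentially the same approach as the paper's own proof: both evaluate $f$ piecewise on squares and nonsquares, use the PP hypothesis to deduce that $\tfrac{\gamma+1}{\gamma}$ and $\tfrac{\gamma-1}{\gamma}$ share a common quadratic character, and then compute $h(f(x))$ case by case. Your introduction of the symbol $\varepsilon$ and the factorization $h(f(x))=f(x)\bigl(f(x)^{(q-1)/2}\mp\gamma\bigr)$ streamline the bookkeeping, and your explicit remark that $\gamma\neq\pm1$ is a small addition, but the substance is identical to the paper's argument.
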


\begin{proof}
From the discussion above, it is clear that $f(x) = \frac{\gamma+1}{\gamma}x$ if $x$ is a square and $f(x) = \frac{\gamma -1}{\gamma}x$ if $x$ is not a square. As $f(x)$ is a PP, both  $\frac{\gamma +1}{\gamma}$ and $\frac{\gamma -1}{\gamma}$ are squares or both of them are not squares. If they are both squares,  then take $h(x) = x^m - \gamma x$. Then $h(f(x)) = h(\frac{\gamma + 1}{\gamma}x) = \frac{1 - \gamma^2}{\gamma}x $ for $x$ which is a square and $h(f(x)) = h(\frac{\gamma - 1}{\gamma}x) = (-1 - \gamma)\frac{\gamma - 1}{\gamma}x = \frac{1 - \gamma^2}{\gamma}x $ for $x$ which is not a square.

On the other hand, if $\frac{\gamma +1}{\gamma}$ and $\frac{\gamma -1}{\gamma}$ are both not squares, then take $h(x) = x^m + \gamma x$. For $x$ which is a square, $f(x) = \frac{\gamma + 1}{\gamma}x$ is not a square and therefore $h(f(x)) = h(\frac{\gamma + 1}{\gamma}x) = (\gamma - 1)\frac{\gamma + 1}{\gamma}x = \frac{\gamma^2 - 1}{\gamma}x$. Similarly, for $x$ which is not a square, $h(f(x)) = h(\frac{\gamma - 1}{\gamma}x) = (\gamma + 1)\frac{\gamma - 1}{\gamma}x = \frac{\gamma^2 - 1}{\gamma}x$.
\end{proof}
The above lemma is crucial to how the inverse permutation is constructed. For ease of notation, we use $s = x^q + \delta_i x$ and $m = \frac{q+1}{2}$.

\begin{theorem}\label{thm:Fp_to_Fosquare_comp_inv}
Let $f(x) = s^m + L(x)$ be a PP of $\mathbb{F}_{q^2}$ where $L(x)$ is a rank 2 linearized polynomial that does not map the $\ker s$ to ${\rm im}\,\, s^m$. Then $h(x) = \eta(x^q + \delta_j x)^m + M(x)$ is a compositional inverse of $f(x)$, where $M(x)$ is a rank 2 linearized polynomial that does not map $\ker (x^q + \delta_j x)$ to ${\rm im}\,\, (x^q + \delta_j x)^m$.
\end{theorem}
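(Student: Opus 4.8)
The plan is to reduce everything to the behaviour of $f$ on a well-chosen basis, exactly as in the proofs of \tref{thm:lowerbound} and \lref{lem:cat1inv}, and then to invert the induced map on $\mathbb{F}_q$ using \lref{lem:comp_scalar}. First I would fix a basis $\{u,v\}$ of $\mathbb{F}_{q^2}$ over $\mathbb{F}_q$ with $u\in\ker s$, set $z=s^m(v)$, and note that since $L$ is rank $2$ the hypothesis ``$L$ does not map $\ker s$ to ${\rm im}\,s^m$'' says exactly that $L(u)\notin\langle z\rangle$, so $\{z,L(u)\}$ is again a basis. Writing $L(v)=cz+dL(u)$ with $c,d\in\mathbb{F}_q$ (here $c\neq 0$ since $L$ is injective), a short computation gives, for $w=pu+tv$,
\[
f(w)=s^m(w)+L(w)=(t^m+ct)\,z+(p+td)\,L(u).
\]
Hence $f$ permutes $\mathbb{F}_{q^2}$ iff $x^m+cx$ permutes $\mathbb{F}_q$; by the discussion preceding \lref{lem:comp_scalar} (cf.\ \tref{thm:carlitz_Fp}) this forces $c+1$ and $c-1$ to be either both squares or both nonsquares in $\mathbb{F}_q$, and in particular $c\neq 0,\pm 1$.

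The crux is to invert $g(x)=x^m+cx$ on $\mathbb{F}_q$ \emph{explicitly as a binomial}; this is where $m=(q+1)/2$ is essential. I would write $g(x)=c\bigl(\tfrac{x^m}{c}+x\bigr)$ and apply \lref{lem:comp_scalar} with $\gamma=c$ to $g_1(x)=\tfrac{x^m}{c}+x$: it yields $r(x)=x^m\mp cx$ (sign chosen according to whether $\tfrac{1+c}{c}$ is a square) with $r(g_1(x))=\lambda x$, where $\lambda=\mp\tfrac{c^2-1}{c}\neq 0$. Therefore $g_1^{-1}=\tfrac1\lambda r$, so the compositional inverse of $g$ on $\mathbb{F}_q$ is
\[
\psi(y)=\tfrac1\lambda\,r\!\left(\tfrac{y}{c}\right)=A\,y^m+B\,y,\qquad A=\tfrac{1}{\lambda c^m},\ \ B=\mp\tfrac1\lambda .
\]

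With $\psi$ in hand I would read off $h=f^{-1}$ in $\{z,L(u)\}$--coordinates. Inverting the map $(p,t)\mapsto(t^m+ct,\;p+td)$ shows that the element $\xi=P z+T\,L(u)$ must be sent by $h$ to $\bigl(T-d\psi(P)\bigr)u+\psi(P)v=T u+(AP^m+BP)(v-du)$. Next I would choose the $(q+1)$-th root of unity $\delta_j$ with $L(u)\in\ker(x^q+\delta_j x)=:\ker s_j$; for this $s_j$ one has $s_j(\xi)=P\,s_j(z)$ and hence $s_j^m(\xi)=P^m(s_j(z))^m$, with $s_j(z)\neq 0$ since $z\notin\ker s_j$. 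Setting $\eta=A(v-du)/(s_j(z))^m$ makes $\eta\,s_j^m$ account for exactly the degree-$m$ part, so that $M:=h-\eta\,s_j^m$ acts by $\xi\mapsto T u+BP(v-du)$. This $M$ is $\mathbb{F}_q$--linear, hence a linearized polynomial $M(x)=\mu_1x^q+\mu_0x$, and it is rank $2$ since $M(L(u))=u$ and $M(z)=B(v-du)$ are linearly independent. A direct check using $\psi(t^m+ct)=t$ then gives $h(f(pu+tv))=pu+tv$, so $h=\eta(x^q+\delta_j x)^m+M(x)$ is the compositional inverse of $f$.

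It remains to show $M$ does not map $\ker s_j$ to ${\rm im}\,s_j^m$. Since $M(\ker s_j)=\langle u\rangle$, this amounts to $u\notin\langle (s_j(z))^m\rangle$. I would argue this in one of two ways: either directly, using $z^{q-1}=\delta_i^{-m}$ to express $s_j(z)$ (and hence $(s_j(z))^m$) as an explicit scalar multiple of $z^m$ and then deriving a contradiction with the hypothesis that $L$ does not send $\ker s$ into ${\rm im}\,s^m$; or, more cleanly, by contraposition through \lref{lem:cat1inv} --- if $M$ did map $\ker s_j$ into ${\rm im}\,s_j^m$, then $h$ would lie in the family produced by \tref{thm:lowerbound}, whence by \lref{lem:cat1inv} so would its compositional inverse $f=h^{-1}$, contradicting the hypothesis on $f$. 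The main obstacles, in order, are: obtaining $g^{-1}$ as a binomial at all (this rests entirely on \lref{lem:comp_scalar} and the value $m=(q+1)/2$); the ``peeling'' that isolates the whole non-linear part of $f^{-1}$ into the single term $\eta(x^q+\delta_j x)^m$; and the bookkeeping in the last step for the degenerate $\delta_i$ (those with ${\rm im}\,s^m\subseteq\ker s$), where the identification of $\delta_j,\eta,M$ must be handled with care.
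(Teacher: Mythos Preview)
Your proposal is correct and follows essentially the same route as the paper: reduce $f$ to coordinates on a basis $\{u,v\}$ with $u\in\ker s$, invert the induced $\mathbb{F}_q$-binomial $x^m+cx$ via \lref{lem:comp_scalar}, pick $\delta_j$ with $L(u)\in\ker(x^q+\delta_jx)$, and then solve for $\eta$ and $M$. The only cosmetic difference is that the paper chooses $v\in L^{-1}({\rm im}\,s^m)$ (forcing your parameter $d=0$ and writing $L(v)=\gamma\,s^m(v)$), which streamlines the bookkeeping; your version carries $d$ but is otherwise the same argument, and your closing contraposition through \lref{lem:cat1inv} to show $M$ does not send $\ker s_j$ into the span of ${\rm im}\,s_j^m$ is a point the paper's proof asserts only implicitly.
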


\begin{proof}
Let $u \in \ker s$ and $v \in L^{-1}({\rm im}\,\,s^m$) form a basis of $\mathbb{F}_{q^2}$. Let $s^m(v) = v_1$ and $L(v) = v_2$. Let $\gamma \in \mathbb{F}_q$ be such that $L(v)/s^m(v) = \gamma$ (since both $s^m(v)$ and $L(v)$ lie in the same one dimensional subspace which contains the image of $s^m$). Consider a generic element $w = \alpha u + \beta v$ where $\alpha, \beta \in \mathbb{F}_q$. Then \[f(w) = \beta^m v_1 + L(w) = \beta^m v_1 + \alpha L(u) + \beta L(v)  = \beta^m v_1 + \beta v_2 + \alpha L(u) \] and taking into consideration $\gamma$, we get, $f(w) = (\frac{\beta^m}{\gamma} + \beta) v_2 + \alpha L(u)$. 

Let $M(x)$ be a rank 2 linearized polynomial, such that $M(L(u)) = u$ and $M(L(v)) = M(v_2) = \frac{\gamma^2}{\gamma^2-1} v$.
Choose $\delta_j$ such that $L(u) \in \ker (x^q + \delta_j x)$. Let us assume $s_j = x^q + \delta_j x$. Therefore, $s_j(L(u)) = 0$. If $\frac{\gamma +1}{\gamma}$ is a square, then choose $k = \frac{-\gamma}{\gamma^2-1}$, else choose $k = \frac{\gamma}{\gamma^2-1}$. Let $\eta \in \mathbb{F}_{q^2}$ be such that $\eta s_j^m(v_2) = kv$.

Then we claim that  $h(x) = \eta s_j^m + M(x)$ is the compositional inverse of $f(x)$. To show this, we evaluate \begin{eqnarray*} h(f(w))  & =  & \eta s_j^m ((\frac{\beta^m}{\gamma} + \beta ) v_2) + (\frac{\beta^m}{\gamma} + \beta)M(v_2) + \alpha M(L(u)) \\ & = & \left[ (\frac{\beta^m}{\gamma} + \beta)^m k + (\frac{\beta^m}{\gamma} + \beta)\frac{\gamma^2}{\gamma^2-1} \right] v + \alpha u  \\ & = & \frac{\mp \gamma}{\gamma^2 - 1}\left[ (\frac{\beta^m}{\gamma} + \beta)^m \mp \gamma (\frac{\beta^m}{\gamma} + \beta) \right] v + \alpha u \\ \mbox{\rm applying \, \lref{lem:comp_scalar} } & = & \frac{\mp\gamma}{\gamma^2-1} \left[ \frac{\gamma^2-1}{\mp \gamma}\beta \right]v + \alpha u = \alpha u + \beta v = w \end{eqnarray*} 
\end{proof}

Clearly, the inverse PP $h(x)$ belongs in the family $(x^q + \delta_jx)^{(q+1)/2} + \frac{1}{\eta} M(x)$ and so the PPs arising out of normalized PPs of the form $(x^q + \delta_i x)^{(q+1)/2} + L(x)$ where $\delta_i$ is a $(q+1)$-th root of unity are closed under compositional inverses.
Observe that unlike the inverses constructed in \lref{lem:cat1inv}, here the linearized polynomials $L(x)$ and $M(x)$ are not inverses of each other. In fact the map $M(L(x))$ is a map with eigenvalues $1,\frac{\gamma^2}{\gamma^2-1}$ and corresponding eigenvectors $u,v$. Similarly, for the map $L(M(x))$, the eigenvectors are $L(u),L(v)$ corresponding to the eigenvalues $1,\frac{\gamma^2}{\gamma^2-1}$ respectively. 
\begin{example}
Consider the field $\mathbb{F}_{169}$: $f(x)=(x^{13}+a^{12}x)^7+a^{87}(x^{13}+a^{107}x)$ is a permutation polynomial of $\mathbb{F}_{13^2}$ which is of the form $s^7 + L(x)$ where $L(x)$ does not map the kernel of $s$ to the image of $s^7$. This permutation polynomial can be associated to the permutation binomial $x^7+2x$ of the base field $\mathbb{F}_{13}$. Its compositional inverse is $h(x)=a^{50}(x^{13}+a^{168}x)^7+a^{59}(x^{13}+a^{167}x)$, which is another PP of the form $\eta s^7 + M(x)$. 
\end{example}
\section{Constructing PPs of $\mathbb{F}_{q^2}$ from s-Polynomials}

So far we have been considering rank $1$ linearized polynomials $s = x^q + \delta_ix$ and constructing PPs by adding linearized polynomials $L(x)$ to some monomial in $s$, say $s^m$. A natural extension would be to consider polynomials in $s$. By arguments we have made earlier, it is enough to consider polynomials of the form $g(s) = s^m + \Sigma_{i=2}^{m-1} a_i s^i$, with $m < q$ and $a_i \in \mathbb{F}_{q^2}$, i.e., $g \in \mathbb{F}_{q^2}[x]$ with $\deg g < q$. We shall refer to these polynomials as s-polynomials.

In the case of $s^m$, the image of this map always lies within a one dimensional subspace of $\mathbb{F}_{q^2}$. That makes a lot of the results that followed, easy to deduce. Unfortunately, when we look at more general s-polynomials, this property of the image lying within a one dimensional subspace of $\mathbb{F}_{q^2}$ no longer holds. Therefore, as a first step, we shall restrict ourselves to those s-polynomials $g(s)$ such that ${\rm im}\, g(s)$ lies within a one dimensional subspace of $\mathbb{F}_{q^2}$.

For this purpose, we shall choose a special case of rank 1 linearized polynomial where $\delta_i = 1$, that is, the polynomial $s = x^q + x$. This is in fact the trace polynomial and it is well known that the trace function maps elements of $\mathbb{F}_{q^2}$ to $\mathbb{F}_q$. Since $\mathbb{F}_q$ is a field, therefore any polynomial $g(s) = s^m + \Sigma_{i=2}^{m-1} a_i s^i$ with $a_i \in \mathbb{F}_q$ would ensure that the image of $g(s)$ also lies within $\mathbb{F}_{q}$. In other words, we first consider $g \in \mathbb{F}_{q}[x]$ with $\deg g < q$ and $s = x^q + x$. We immediately get a generalization of \tref{thm:lowerbound} that we now state.
\begin{theorem}\label{thm:pp_from_basefield_poly}
Let $s = x^q + x$ and $g(s) = s^m + \Sigma_{i=2}^{m-1} a_i s^i$, where $a_i \in \mathbb{F}_q$. Then any polynomial $f = g(s) + L(x)$ is a permutation polynomial of $\mathbb{F}_{q^2}$ if $L(x)$ is a rank 2 linearized polynomial of $\mathbb{F}_{q^2}$ that maps the kernel of $s$ to $\mathbb{F}_q$.
\end{theorem}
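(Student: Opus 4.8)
The plan is to mimic the proof of \tref{thm:lowerbound}, exploiting the fact that for $s = x^q + x$ (the trace map $\mathrm{Tr}_{\mathbb{F}_{q^2}/\mathbb{F}_q}$) both $\mathrm{im}\, g(s)$ and $L(\ker s)$ lie inside the one–dimensional $\mathbb{F}_q$-subspace $\mathbb{F}_q = \mathbb{F}_q\cdot 1$ of $\mathbb{F}_{q^2}$; this is exactly the substitute for the property ``$\mathrm{im}\, s^m$ is one–dimensional'' that made \tref{thm:lowerbound} work. First I would fix a basis $\{u,v\}$ of $\mathbb{F}_{q^2}$ over $\mathbb{F}_q$ adapted to $s$: take $u$ a nonzero element of $\ker s$ (a one–dimensional subspace since $s$ is rank $1$) and $v$ with $s(v)=1$, which exists because the trace is surjective onto $\mathbb{F}_q$. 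These are $\mathbb{F}_q$-linearly independent, since $v=cu$ with $c\in\mathbb{F}_q$ would give $1=s(v)=c\,s(u)=0$.

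Next, for a generic $w=\alpha u+\beta v$ with $\alpha,\beta\in\mathbb{F}_q$, the $\mathbb{F}_q$-linearity of $s$ gives $s(w)=\beta$, so $g(s)(w)=\beta^m+\sum_{i=2}^{m-1}a_i\beta^i=g(\beta)\in\mathbb{F}_q$ because $g\in\mathbb{F}_q[x]$. The hypothesis that $L$ maps $\ker s$ into $\mathbb{F}_q$ says $L(u)=\gamma$ for some $\gamma\in\mathbb{F}_q$, and $\gamma\neq 0$ since $L$ has rank $2$ (hence $L(u)\neq 0$). Consequently $\{1,L(v)\}$ is again a basis of $\mathbb{F}_{q^2}$ over $\mathbb{F}_q$: it spans the same space as $\{L(u),L(v)\}$, which is a basis because $L$ is rank $2$. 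In this basis, $f(w)=\big(g(\beta)+\gamma\alpha\big)\cdot 1 + \beta\,L(v)$.

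From here I would conclude injectivity — hence, on a finite set, the permutation property — directly: if $f(w_1)=f(w_2)$ for $w_k=\alpha_k u+\beta_k v$, then comparing the $L(v)$-coordinates forces $\beta_1=\beta_2$, whereupon $g(\beta_1)=g(\beta_2)$ and the $1$-coordinate equation reduces to $\gamma\alpha_1=\gamma\alpha_2$; since $\gamma\neq 0$, $\alpha_1=\alpha_2$, so $w_1=w_2$. (If one prefers the two-case format of \tref{thm:lowerbound}: the case $\beta_1=\beta_2,\ \alpha_1\neq\alpha_2$ is handled by $L$ being injective, and the case $\beta_1\neq\beta_2$ is ruled out by the linear independence of $1$ and $L(v)$.)

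There is no genuine obstacle here: the whole content is the observation that $s=\mathrm{Tr}$ together with $g\in\mathbb{F}_q[x]$ keeps the nonlinear part of $f$ inside the line $\mathbb{F}_q=\mathrm{span}(1)$, after which the argument is the identical linear-algebraic coordinate comparison as in \tref{thm:lowerbound}, with $\mathrm{span}(z)$ replaced by $\mathrm{span}(1)$ and with $\gamma$ playing the role of the old scalar. The only things to double-check are the non-degeneracy inputs — $\ker s$ one–dimensional, $L(u)\neq 0$ from $L$ being rank $2$, and hence $\{1,L(v)\}$ a basis — all of which follow immediately from the hypotheses.
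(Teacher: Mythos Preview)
Your proposal is correct and follows essentially the same route as the paper: fix a basis $\{u,v\}$ of $\mathbb{F}_{q^2}$ with $u\in\ker s$, observe that $g(s)(w)$ and $L(u)$ both lie in $\mathbb{F}_q$ while $L(v)\notin\mathbb{F}_q$ (since $L$ is rank $2$), and conclude injectivity by a coordinate comparison. Your normalization $s(v)=1$ and your explicit use of the basis $\{1,L(v)\}$ streamline the argument slightly relative to the paper's two-case contradiction, but the underlying idea is identical.
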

\begin{proof}
This proof runs along the lines of the proof of \tref{thm:lowerbound}. Let $u$ belong to the kernel of $x^q + x$ and $v \in \mathbb{F}_{q^2}$, such that $\{u,v\}$ form a basis of $\mathbb{F}_{q^2}$. For any $\alpha_i u + \beta_i v = w_i \in \mathbb{F}_{q^2}$ where $\alpha_i, \beta_i \in \mathbb{F}_q$, we have $f(w_i) = g(\beta_i s(v)) + L(w_i)$. We now demonstrate that for all $w_i \not= w_j$, $f(w_i) \not= f(w_j)$, which would prove that $f$ is a PP.

If $w_i,w_j \in \mathbb{F}_{q^2}$ such that $\beta_i = \beta_j$ and $\alpha_i \not= \alpha_j$, then as $L(x)$ is a rank 2 linearized polynomial, $L(w_i) \not= L(w_j)$ and therefore $f(w_i) \not= f(w_j)$. 

On the other hand, if $\beta_i \not= \beta_j$ and we assume that $f(w_i) = f(w_j)$, then a rearrangement of terms give us \[g(\beta_i s(v)) - g(\beta_j s(v)) + (\alpha_i-\alpha_j)L(u) = (\beta_j-\beta_j)L(v)      \]
Notice that the quantity on the left hand side is in $\mathbb{F}_q$ whereas, by virtue of $L(x)$ being a rank 2 linearized polynomial that maps the kernel of $x^q + x$ to $\mathbb{F}_q$, $L(v) \not\in \mathbb{F}_q$. This contradicts the assumption that $f(w_i) = f(w_j)$.
\end{proof}

On the same lines as the counting done in \cref{cor:lowerbound}, one can conclude that there are at least $q(q-1)^2$ PPs of the kind $g(s)+L(x)$ for every s-polynomial $g \in \mathbb{F}_{q}[x]$ when $s$ is the trace function. For permutation polynomials arising from the construction given in \tref{thm:pp_from_basefield_poly}, we now demonstrate how to construct the compositional inverse.

\begin{lemma}\label{lem:pp_base_inv}
Let $f(x) = g(s) + L(x)$ be a permutation polynomial of $\mathbb{F}_{q^2}$, where $s = x^q + x$ and $g(s) = s^m + \Sigma_{i=2}^{m-1}a_is^i$, where $a_i \in \mathbb{F}_q$. Further, $L(x)$ is a rank 2https://www.overleaf.com/project/61c16380bf92059009c01e18/detacher linearized polynomial that maps the kernel of $x^q + x$ to $\mathbb{F}_q$. Then $h(x) = g_1(x^q - x) + M(x)$ is the compositional inverse of $f$, where $M(x)$ is a rank 2 linearized polynomial which is the compositional inverse of $L(x)$ and the $s$-polynomial $g_1(s_1) = \Sigma_{i=2}^{m}\eta_is_1^i$ has the same degree as the $s$-polynomial $g(s)$.  
\end{lemma}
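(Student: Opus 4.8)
The plan is to mimic the proof of \lref{lem:cat1inv}: work in an $\mathbb{F}_q$-basis of $\mathbb{F}_{q^2}$ adapted to $\ker(x^q+x)$ and track where $f$ and the candidate inverse $h$ send a generic element. Fix a nonzero $u\in\ker(x^q+x)$ and a $v$ so that $\{u,v\}$ is a basis of $\mathbb{F}_{q^2}$, and set $t:=s(v)=v^q+v$, which lies in $\mathbb{F}_q^*$ since $v\notin\ker s$. For a generic $w=\alpha u+\beta v$ with $\alpha,\beta\in\mathbb{F}_q$ we get $s(w)=\beta t$, hence $g(s(w))=g(\beta t)\in\mathbb{F}_q$ and $f(w)=g(\beta t)+L(w)$ with $L(w)=\alpha L(u)+\beta L(v)$. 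By hypothesis $c:=L(u)\in\mathbb{F}_q^*$; and since $L$ has rank $2$ it carries $\{u,v\}$ to a basis $\{L(u),L(v)\}$, so $L(v)\notin\mathbb{F}_q$.

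Next I would pin down the candidate inverse. Let $M$ be the compositional inverse of $L$, which by \lref{lem:lin_comp} is again a rank $2$ linearized polynomial; then $M(L(w))=w$, and since $M$ is $\mathbb{F}_q$-linear, $M(g(\beta t))=g(\beta t)\,M(1)=\tfrac{g(\beta t)}{c}\,u$, because $M(1)=M(c)/c=u/c\in\ker(x^q+x)$. Writing $s_1=x^q-x$, note that $s_1$ vanishes on $\mathbb{F}_q$, so
\[
s_1(f(w))=s_1(L(w))=\alpha(c^q-c)+\beta(L(v)^q-L(v))=\beta b,\qquad b:=L(v)^q-L(v).
\]
Here $b\neq 0$ (as $L(v)\notin\mathbb{F}_q$) and $b^q=-b$, so $b\in\ker(x^q+x)$ and $b=\lambda u$ for some $\lambda\in\mathbb{F}_q^*$; thus ${\rm im}\,s_1$ sits inside the line $\mathbb{F}_q\,u$.

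Since $g_1(s_1)$ is evaluated only on that line, I would look for its coefficients in the twisted form $\eta_i=\rho_i\,u^{\,1-i}$ with $\rho_i\in\mathbb{F}_q$, so that $g_1(\mu u)=\tilde g(\mu)\,u$ for $\mu\in\mathbb{F}_q$, where $\tilde g(x):=\sum_{i=2}^{m}\rho_i x^i\in\mathbb{F}_q[x]$ and $m:=\deg g<q$ is the standing degree bound. Combining the computations,
\[
h(f(w))=g_1(s_1(f(w)))+M(f(w))=\tilde g(\beta\lambda)\,u+\tfrac{g(\beta t)}{c}\,u+w,
\]
so $h(f(w))=w$ for every $w$ iff $\tilde g(\beta\lambda)=-\tfrac1c\,g(\beta t)$ for all $\beta\in\mathbb{F}_q$. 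Both sides are polynomials in $\beta$ of degree at most $m<q$, so agreement on all $q$ elements of $\mathbb{F}_q$ forces an identity of polynomials; matching the coefficient of $\beta^i$ yields $\rho_m=-t^m/(c\lambda^m)$ and $\rho_i=-a_it^i/(c\lambda^i)$ for $2\le i\le m-1$. These determine $\tilde g$, hence $g_1$ via $\eta_i=\rho_iu^{1-i}$, and since $\rho_m\neq 0$ we get $\deg g_1=m=\deg g$, as claimed. Finally $h\circ f=\mathrm{id}$ on the finite set $\mathbb{F}_{q^2}$ forces $f$ to be a bijection with $h=f^{-1}$, so $h$ is the compositional inverse and has the stated shape; note also that $M$ sends $\ker(x^q-x)=\mathbb{F}_q$ into $\ker(x^q+x)$, mirroring the role of $L$.

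The only genuinely non-routine step is the choice of the twisted coefficients $\eta_i=\rho_iu^{1-i}$: although $g_1\notin\mathbb{F}_q[x]$ in general, this choice makes $g_1\circ(x^q-x)$ behave on the line $\ker(x^q+x)$ exactly like the scalar multiple $u\cdot\tilde g$ of a genuine base-field polynomial, which is what collapses the verification to the degree-$<q$ coefficient-matching argument; everything else is bookkeeping along the lines of \tref{thm:pp_from_basefield_poly} and \lref{lem:cat1inv}. (In characteristic $2$ one has $x^q-x=x^q+x$ and $\ker(x^q+x)=\mathbb{F}_q$, so $u$ may be taken in $\mathbb{F}_q^*$ and the same computation goes through with $g_1$ ending up in $\mathbb{F}_q[x]$.)
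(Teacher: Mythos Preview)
Your proof is correct and follows essentially the same route as the paper's: choose a basis $\{u,v\}$ with $u\in\ker s$, compute $f(w)$, apply $M=L^{-1}$ and $s_1=x^q-x$ to $f(w)$, and then solve for the coefficients $\eta_i$ so that the $g_1(s_1)$-term cancels the residual $M(g(\beta t))$. The only cosmetic difference is that the paper normalises $v$ so that $v^q+v=1$ (your $t=1$), which streamlines the coefficient match to $\eta_i z^i=-a_iM(1)$; your explicit parametrisation $\eta_i=\rho_i u^{1-i}$ and the observations $M(1)=u/c$, $b=\lambda u\in\ker(x^q+x)$ are nice clarifications of the same computation, and your characteristic~$2$ remark is a welcome addition.
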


\begin{proof}
Let $u \in \ker (x^q + x)$ and $v \in \mathbb{F}_{q^2}$ form a basis of $\mathbb{F}_{q^2}$, where $v^q + v = 1$. Then for any $w = \alpha u + \beta v$ with $\alpha,\beta \in \mathbb{F}_q$, we have \[ f(w) = g(\beta s(v)) + L(w) = g(\beta) + L(w) \] Let $M$ be the compositional inverse of the linearized polynomial $L$. Then $M(f(w)) = M(g(\beta)) + w$. Consider the rank 1 linearized polynomial $x^q - x$. Note that $\ker (x^q - x) = \mathbb{F}_q$. The action of $x^q - x$ on $f(w)$ yields \[  f(w)^q - f(w) = \beta (L(v)^q - L(v)) = \beta z \] where $z = L(v)^q - L(v)$. Note that the above simplification is possible since $g(\beta), \alpha L(u) \in \mathbb{F}_q$ and $\beta^q = \beta$. Now, $g(\beta)  = \beta^m + \Sigma_{i=2}^{m-1}a_i\beta^i$ and therefore $M(g(\beta)) = M(\beta^m) + \Sigma_{i=2}^{m-1}a_iM(\beta^i) = g(\beta)M(1)$. Choose $\eta_i \in \mathbb{F}_{q^2}$ such that $\eta_i z^i = -a_iM(1)$ for $i = 2, \cdots, m$ (take $a_m = 1$). Let $g_1(x^q-x) = \eta_m (x^q - x)^m + \Sigma_{i=2}^{m-1}\eta_i(x^q-x)^i$. 

Then we claim that $h(x) = g_1(x^q - x) + M(x)$ is the compositional inverse of $f(x)$. \begin{eqnarray*} \left[ \Sigma_{i=2}^m \eta_i (x^q-x)^i + M(x) \right](f(w)) & = & \Sigma_{i=2}^m \eta_i (\beta z)^i + g(\beta)M(1) + w \\ & = & \Sigma_{i=2}^m \beta^i(\eta_i z^i + a_iM(1)) + w = w \end{eqnarray*}  
\end{proof}

So far, we have only dealt with normalized PPs arising out of the special rank 1 polynomial which is the trace function.  What about the other rank one linearized polynomials $s = x^q + \delta_ix$? As it turns out, there is a quick generalization of \tref{thm:pp_from_basefield_poly}.

\begin{corollary}\label{cor:pp_from_base}
Let $s = x^q + \delta_ix$ be a rank 1 linearized polynomial. Then every s-polynomial of the kind $g(s) = s^m + \Sigma_{i=2}^{m-1} \lambda^{m-i}a_is^i$ (where $a_i \in \mathbb{F}_q$ and $\lambda$ is in the image of $s$) along with every rank 2 linearized polynomial $L(x)$ that maps the kernel of $s$ to the one dimensional space spanned by $\lambda^m$ forms permutation polynomials of $\mathbb{F}_{q^2}$ of the shape $f = g(s) + L(x)$. 
\end{corollary}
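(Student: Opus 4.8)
The plan is to run the argument of \tref{thm:lowerbound} almost verbatim, the one genuinely new point being a short computation that the prescribed shape of the $s$-polynomial forces ${\rm im}\, g(s)$ into the line spanned by $\lambda^m$. First I would fix a nonzero $\lambda \in {\rm im}\, s$; since $s$ is rank $1$, ${\rm im}\, s = \mathbb{F}_q\lambda$. Pick any $v \notin \ker s$ and rescale it so that $s(v) = \lambda$, and pick a nonzero $u \in \ker s$, so that $\{u,v\}$ is an $\mathbb{F}_q$-basis of $\mathbb{F}_{q^2}$. For a generic $w = \alpha u + \beta v$ with $\alpha,\beta \in \mathbb{F}_q$ we have $s(w) = \beta\lambda$, hence $s^i(w) = \beta^i\lambda^i$ for every $i$; substituting into $g(s) = s^m + \sum_{i=2}^{m-1}\lambda^{m-i}a_i s^i$, the powers of $\lambda$ collapse (each term contributes $a_i\beta^i\lambda^{m-i}\lambda^i = a_i\beta^i\lambda^m$) and we get $g(s)(w) = \lambda^m\bigl(\beta^m + \sum_{i=2}^{m-1}a_i\beta^i\bigr) = P(\beta)\,\lambda^m$, where $P(x) = x^m + \sum_{i=2}^{m-1}a_i x^i \in \mathbb{F}_q[x]$. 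Here the hypothesis $a_i \in \mathbb{F}_q$ is exactly what keeps $P(\beta)$ in $\mathbb{F}_q$, so that $g(s)(w)$ really lies in $\mathbb{F}_q\lambda^m$.

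Next I would bring in $L$. By hypothesis $L$ maps $\ker s = \mathbb{F}_q u$ into the span of $\lambda^m$, so $L(u) = \gamma\lambda^m$ for some $\gamma \in \mathbb{F}_q$; and $\gamma \neq 0$, since otherwise $L(u) = 0$ would contradict $L$ having rank $2$. Because $L$ is rank $2$, $L(u)$ and $L(v)$ span $\mathbb{F}_{q^2}$, and since $L(u)$ is a nonzero multiple of $\lambda^m$, the pair $\{\lambda^m, L(v)\}$ is an $\mathbb{F}_q$-basis of $\mathbb{F}_{q^2}$. Combining with the previous step, $f(w) = g(s)(w) + L(w) = \bigl(P(\beta) + \alpha\gamma\bigr)\lambda^m + \beta\,L(v)$.

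It then remains to check injectivity of $f$ on the finite set $\mathbb{F}_{q^2}$. If $f(\alpha_1 u + \beta_1 v) = f(\alpha_2 u + \beta_2 v)$, then reading off coordinates in the basis $\{\lambda^m, L(v)\}$ gives $\beta_1 = \beta_2$ together with $P(\beta_1) + \alpha_1\gamma = P(\beta_2) + \alpha_2\gamma$; the first identity forces $P(\beta_1) = P(\beta_2)$, and then $\gamma \neq 0$ forces $\alpha_1 = \alpha_2$. Hence $f$ is a bijection, i.e. a permutation polynomial of $\mathbb{F}_{q^2}$. Note, exactly as in \tref{thm:lowerbound}, that $P$ need not permute $\mathbb{F}_q$: the rank $2$ linearized summand $L$ carries the load. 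I do not anticipate a real obstacle here — the only places needing care are the telescoping evaluation of $g(s)(w)$ and the remark that rank $2$ of $L$ yields $\gamma \neq 0$; specializing to $\delta_i = 1$, $\lambda = 1$ recovers \tref{thm:pp_from_basefield_poly}.
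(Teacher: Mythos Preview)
Your argument is correct. The telescoping computation showing $g(s)(w)=P(\beta)\lambda^m$ with $P\in\mathbb{F}_q[x]$ is exactly the point that needs checking, and you handle the injectivity cleanly via the basis $\{\lambda^m,L(v)\}$.

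The paper takes a slightly different, more reductive route: rather than recomputing injectivity from scratch, it sets $\eta=\lambda^{-1}$, observes that $\eta s$ has image $\mathbb{F}_q$, and then invokes \tref{thm:pp_from_basefield_poly} directly to conclude that $g'(\eta s)+L'(x)$ is a PP for the base-field polynomial $g'(x)=x^m+\sum a_ix^i$ and any rank~$2$ $L'$ mapping $\ker s$ to $\mathbb{F}_q$; multiplying through by $\lambda^m=\eta^{-m}$ then recovers the stated $s$-polynomial and shows the modified $L(x)=\lambda^mL'(x)$ sends $\ker s$ into $\mathbb{F}_q\lambda^m$. Your approach is more self-contained (it does not lean on \tref{thm:pp_from_basefield_poly}) and makes the role of the coordinate $P(\beta)$ very explicit; the paper's approach is more economical and highlights that the corollary is literally a rescaling of the trace case. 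Both land on the same image-in-a-line observation, just packaged differently.
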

\begin{proof}
Consider $\lambda$ in the image of $s$ and let $\eta = \lambda^{-1}$. Then the image of the polynomial  $\eta s = \eta(x^q + \delta_i x)$ is $\mathbb{F}_q$. Take any s-polynomial $g(s) = s^m + \Sigma_{i=2}^{m-1} a_is^i$ with $a_i \in \mathbb{F}_q$. Then arguing along the lines of \tref{thm:pp_from_basefield_poly}, one can conclude that every rank 2 linearized polynomial $L(x)$ that maps the kernel of $s$ to $\mathbb{F}_q$ would form a PP of the shape $f = g(\eta s) + L(x)$. These PPs are not monic but have a leading coefficient $\eta^m$ and making them monic would modify the s-polynomial to $s^m + \Sigma_{i=2}^{m-1} \eta^{i-m}a_is^i$ which is of the form given in the statement of this corollary as $\lambda = \eta^{-1}$. The linearized polynomial $L(x)$ gets modified to $\eta^{-m}L(x)$. Thus the linear map $\eta^{-m} L(x)$ maps the kernel of $s$ to the span of $\eta^{-m} = \lambda^m$.
\end{proof}

Of course, the $\lambda$ in \cref{cor:pp_from_base} is not unique, but all the other $\lambda$ in the image of $s$ are of the form $\alpha_i\lambda$ with $\alpha_i \in \mathbb{F}_q$. So it is enough to consider any one $\lambda$ in the image of each rank 1 linearized polynomial $x^q + \delta_ix$. The compositional inverses of PPs arising from \cref{cor:pp_from_base} can be obtained with a slight tweak of \lref{lem:pp_base_inv}.

\begin{corollary}\label{cor:gen_s}
Let $f = g(s) + L(x)$ be a normalized PP of $\mathbb{F}_{q^2}$, where $s = x^q + \delta_ix$ is a rank 1 linearized polynomial, $g(s) = \Sigma_{i=2}^m \lambda^{m-i}a_is^i$ (where $a_i \in \mathbb{F}_q$, $a_m = 1$ and $\lambda$ is a nonzero element in the image of $s$) and $L(x)$ is a rank 2 linearized polynomial that maps the kernel of $s$  to the one dimensional space spanned by $\lambda^m$. Then $h(x) = g_1(x^q + \delta_jx) + M(x)$ is the compositional inverse of $f$, where $M(x)$ is a rank 2 linearized polynomial which is the compositional inverse of $L(x)$ and $\lambda^m \in \ker (x^q + \delta_jx)$. 
\end{corollary}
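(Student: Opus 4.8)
The plan is to follow the proof of \lref{lem:pp_base_inv} almost verbatim, with the trace's kernel $\mathbb{F}_q$ replaced by the line $\mathbb{F}_q\lambda^m$ and the partner polynomial $x^q-x$ replaced by a suitable $s_j=x^q+\delta_j x$. Fix $u\in\ker s$ and, since $\lambda\in{\rm im}\, s$, choose $v$ with $s(v)=\lambda$; then $\{u,v\}$ is an $\mathbb{F}_q$-basis of $\mathbb{F}_{q^2}$ (as $u\neq 0$ spans $\ker s$ and $v\notin\ker s$). Write $g(\beta\lambda)=\lambda^m G(\beta)$, where $G(t)=\sum_{i=2}^m a_i t^i\in\mathbb{F}_q[t]$ with $a_m=1$ is the base-field polynomial hidden inside $g$. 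Since $L$ is rank $2$ (hence injective) and sends $\ker s$ into $\mathbb{F}_q\lambda^m$, we have $L(u)=c\lambda^m$ for some $c\in\mathbb{F}_q^*$, while $L(v)\notin\mathbb{F}_q\lambda^m$. On the inverse side put $\delta_j=-(\lambda^m)^{q-1}$, so that $\lambda^m\in\ker(x^q+\delta_j x)$; this $\delta_j$ is a $(q+1)$-th root of unity, since $\delta_j^{q+1}=(-1)^{q+1}(\lambda^m)^{q^2-1}=1$ in every characteristic, and therefore $s_j:=x^q+\delta_j x$ is rank $1$ with $\ker s_j=\mathbb{F}_q\lambda^m$. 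Finally let $M$ be the compositional inverse of $L$, which is a rank $2$ linearized polynomial by \lref{lem:lin_comp}; then $M(L(u))=u$ and $M(L(v))=v$ automatically.

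I would then run the forward computation on a generic $w=\alpha u+\beta v$ with $\alpha,\beta\in\mathbb{F}_q$. Using $s(w)=\beta\lambda$ and $L(u)=c\lambda^m$,
\[ f(w)=g(\beta\lambda)+\alpha L(u)+\beta L(v)=(G(\beta)+c\alpha)\lambda^m+\beta L(v), \]
where $G(\beta)+c\alpha\in\mathbb{F}_q$. Applying $s_j$ annihilates the $\lambda^m$-term, so $s_j(f(w))=\beta\,s_j(L(v))=\beta z$ with $z:=s_j(L(v))\neq 0$ (since $L(v)\notin\ker s_j$); hence $s_j^i(f(w))=\beta^i z^i$. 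Applying the $\mathbb{F}_q$-linear map $M$, and using $M(L(v))=v$ together with $M(\lambda^m)=c^{-1}u$ (which follows from $M(L(u))=M(c\lambda^m)=u$), gives $M(f(w))=(G(\beta)+c\alpha)c^{-1}u+\beta v=w+c^{-1}G(\beta)u$. It remains to choose $g_1$ so that its contribution cancels the stray term $c^{-1}G(\beta)u$: writing $g_1(t)=\sum_{i=2}^m\eta_i t^i$, the identity $s_j(f(w))=\beta z$ gives $g_1(s_j)(f(w))=\sum_{i=2}^m\eta_i(\beta z)^i$, so the choice $\eta_i=-c^{-1}a_i z^{-i}u$ makes this equal $-c^{-1}G(\beta)u$. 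Adding the two pieces yields $h(f(w))=g_1(s_j)(f(w))+M(f(w))=w$ for every $w\in\mathbb{F}_{q^2}$, and since $f$ is a bijection this shows $h$ is its compositional inverse modulo $x^{q^2}-x$. Extracting the leading coefficient $\eta_m=-c^{-1}z^{-m}u$ one sees $g_1(t)=\eta_m\sum_{i=2}^m a_i z^{m-i}t^i$ and $\eta_m^{-1}M(\lambda^m)=-z^m$, so $\eta_m^{-1}h$ is a normalized PP of exactly the shape in \cref{cor:pp_from_base} (with $z$ playing the role of $\lambda$ and $\eta_m^{-1}M$ that of $L$); hence $h$ lies in the family of \cref{cor:pp_from_base}, and these PPs are closed under compositional inverses.

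The computation itself is routine once the data are in place; the only genuinely delicate part is the bookkeeping forced by the scaling by $\lambda$: correctly pinning down $\delta_j=-(\lambda^m)^{q-1}$ and verifying it is a $(q+1)$-th root of unity, and threading the scalar $c\in\mathbb{F}_q^*$ coming from $L(u)=c\lambda^m$ all the way through to the coefficients $\eta_i$ of $g_1$ and to the claim that $h$ belongs to the family of \cref{cor:pp_from_base}. Everything else is a transcription of the proof of \lref{lem:pp_base_inv}, with $\mathbb{F}_q$ replaced by the line $\mathbb{F}_q\lambda^m$ and $x^q-x$ replaced by $s_j=x^q+\delta_j x$.
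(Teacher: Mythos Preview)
Your proof is correct and follows essentially the same approach as the paper's: pick a basis $\{u,v\}$ adapted to $\ker s$, compute $f(w)$, apply $s_j$ and $M$ separately, and choose the coefficients $\eta_i$ of $g_1$ to cancel the residual term. Your version is slightly more explicit---you fix $v$ with $s(v)=\lambda$ (so the paper's $v_1$ becomes $1$), write down $\delta_j=-(\lambda^m)^{q-1}$ and verify it is a $(q+1)$-th root of unity, and track the scalar $c$ from $L(u)=c\lambda^m$ through to $\eta_i$---and you add the check that $h$ lies in the family of \cref{cor:pp_from_base}, which the paper's proof omits; but the skeleton is identical.
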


\begin{proof}
The proof is the same as that of \lref{lem:pp_base_inv} but with the construction made a bit more explicit. We begin by re-writing $g(s) = \Sigma_{i=2}^m \lambda^{m-i}a_is^i = \lambda^m \Sigma_{i=2}^m a_i\lambda^{-i}s^i = \lambda^m g'(\lambda^{-1}s)$ where $g'(s) = \Sigma_{i=2}^m a_is^i$. Let $\{u,v\}$ form a basis for $\mathbb{F}_{q^2}$ where $u \in \ker (x^q + \delta_ix)$. Let $v_1 = \lambda^{-1}s(v) = \lambda^{-1}(v^q + \delta_iv) \in \mathbb{F}_q$. For any $w = \alpha u + \beta v$ with $\alpha,\beta \in \mathbb{F}_q$, we have \[ f(w) = g(\beta s(v)) + L(w) = \lambda^m g'(\beta v_1) + L(w) \] If $M$ is the compositional inverse of $L$, then $M(f(w)) = M(\lambda^mg'(\beta v_1)) + w$. Let $s_1 = x^q + \delta_jx$ be such that $s_1(\lambda^m) = 0$. Then \[ s_1(f(w)) = s_1(\lambda^mg'(\beta v_1)) + \alpha s_1(L(u)) + \beta s_1(L(v)) = \beta v_2 \] where $v_2 = s_1(L(v))$. Note that $s_1(L(u)) = 0$ and $s_1(\lambda^mg'(\beta v_1)) = 0$. Find $\eta_i \in \mathbb{F}_{q^2}$ such that $\eta_i v_2^i = -a_iM(\lambda^{m} v_1^i)$ for $i=2,\cdots,m$. Let $g_1(s_1) = \Sigma_{i=2}^m \eta_is_1^i$. Then $h(x) = g_1(s_1) + M(x)$ is the compositional inverse of $f(x)$ which can be verified by evaluating $h(f(w)) = w$.
\end{proof}

Observe from the above constructions that the $s$-polynomials $g(s), g_1(s)$ contain the same monomials. Thus if the $s$-polynomial that generates the PP is a trinomial, then the compositional inverse also gets generated from a $s$-polynomial which is a  trinomial consisting of the same monomials. Of course, this property holds for PPs generated by \tref{thm:pp_from_basefield_poly} and \cref{cor:pp_from_base} where the appropriate rank 1 linearized polynomial $s = x^q + \delta_ix$ is to be used in the $s$-polynomials. 

There are several papers in literature that deal with PPs of a form quite close to what we have been discussing in \tref{thm:pp_from_basefield_poly} and \cref{cor:pp_from_base}. Most of those papers have too specific a structure imposed on them. Some of these papers include  \cite{Yuan2008PermutationPO}, \cite{ZHA2012781}, \cite{Zheng2015}, \cite{Zheng:2016:LCP:2995737.2995773}, \cite{Wu_Yuan}. Some of these papers use the trace polynomial, some of them restrict themselves to fields with characteristic 2 or 3 and so on. We claim PPs of the form $f(x) = g(s) + L(x)$ that appear in those papers are but a small fraction of the total number of PPs of this form.  

Note that the kernel of a rank $1$ linearized polynomial $s = x^q + \delta_ix$ is a one dimensional subspace of $\mathbb{F}_{q^2}$. In the same way, if one considers $x^q + \delta_ix + \lambda$, where $\lambda$ is in the image of $s = x^q + \delta_ix$, then the kernel of this function is a one dimensional affine line in $\mathbb{F}_{q^2}$. Thus, application of $s^m$ which was the $s$-polynomial used in Section~3, with $s = x^q + \delta_ix + \lambda$ results in $(x^q + \delta_ix + \lambda)^m = s^m + \Sigma_{i=0}^{m-1}\begin{pmatrix} m \\ i \end{pmatrix} \lambda^{m-i}s^i$. These translate to very specific subset of $s$-polynomials that have appeared in \tref{thm:pp_from_basefield_poly} and \cref{cor:pp_from_base}. Note that as far as $s$-polynomials are concerned, the terms corresponding to $s^0$ and $s^1$ are dropped and the normalized PPs generated using this particular  $s$-polynomial would contain polynomials of the form $(x^q + \delta_ix + \lambda)^m + L(x)$ in their associated family of PPs. We mention this here, since PPs of the form $(x^q + \delta_ix + \lambda)^m + L(x)$ have been often studied in literature but these form only a small subset of the family of PPs that appear associated to the normalized PPs appearing in \tref{thm:pp_from_basefield_poly} and \cref{cor:pp_from_base}.

The total number of s-polynomials of the form $g(s) = s^m + \Sigma_{i=2}^{m-1} a_is^i$ with $a_i \in \mathbb{F}_q$ and $2 \le m < q$ is equal to $1+q+\cdots + q^{q-3} = \frac{q^{q-2}-1}{q-1}$. Note that this count includes the s-polynomials that are pure monomials. Thus the trace function can be used as $s$ for each one of these $s$-polynomials and every one of these s-polynomials would generate at least $q(q-1)^2$ normalized PPs by \tref{thm:pp_from_basefield_poly}. By \cref{cor:pp_from_base}, each one of these s-polynomials give another $q$ other $s$-polynomials corresponding to each of the other rank one linear maps that may be used as $s$ and every one of these s-polynomials in turn generate at least $q(q-1)^2$ normalized PPs when added to appropriate rank 2 linearized polynomials. So the net total of normalized PPs are $q(q^2-1)(q^{q-2}-1)$. If one also counts the PPs belonging to the families associated to each normalized PP, then this number stands at $q^3(q^2-1)^2(q^{q-2}-1)$. Note that this number is actually a lower bound to the actual number of PPs that can be generated of the kind $g(s) + L(x)$ and so we now explore for some more normalized PPs of this shape.

\begin{theorem}\label{thm:from_perm}
Let $g(x) = x^m + \Sigma_{i=2}^{m-1} a_ix^i$ with $a_i \in \mathbb{F}_q$. If $g(x) + \gamma x$ is a PP of $\mathbb{F}_q$ for some $\gamma \in \mathbb{F}_q^*$, then there exist normalized PPs of the form $s^m + \Sigma_{i=2}^{m-1} \lambda^{m-i}a_is^i + L(x)$ for appropriate rank 1 linearized polynomial $s = x^q + \delta_ix$, $\lambda$ belonging to the image of $s$ and  where $L(x)$ is a rank $2$ linearized polynomial that does not map the kernel of $s$ to the span of $\lambda^m$.

If there are $k$ distinct $\gamma \in \mathbb{F}_q$ such that $g(x) + \gamma x$ is a PP of $\mathbb{F}_q$, then there are $kq^2(q-1)$ distinct rank $2$ linearized polynomials $L(x)$ that do not map the kernel of $s$ to the span of $\lambda^m$, such that $s^m +\Sigma_{i=2}^{m-1} \lambda^{m-i}a_is^i + L(x)$ are normalized PPs of $\mathbb{F}_{q^2}$.
 \end{theorem}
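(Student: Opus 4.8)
The plan is to transcribe the proof of \tref{thm:Fp_to_Fpsquare} almost verbatim, replacing the monomial $s^m$ by the $s$-polynomial $g(s):=s^m+\Sigma_{i=2}^{m-1}\lambda^{m-i}a_is^i$ and the base-field binomial $x^m+\gamma x$ by $g(x)+\gamma x$ (where $g(x)=x^m+\Sigma_{i=2}^{m-1}a_ix^i$, so $g(x)=\Sigma_{i=2}^{m}a_ix^i$ with $a_m=1$). First I would fix any rank $1$ linearized polynomial $s=x^q+\delta_ix$ and a nonzero $\lambda\in{\rm im}\,s$, and choose a basis $\{u,v\}$ of $\mathbb{F}_{q^2}$ with $u\in\ker s$ and $s(v)=\lambda$; such a $v$ exists since $\lambda\neq 0$ forces $v\notin\ker s$, so $u,v$ are independent. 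For $w=\alpha u+\beta v$ with $\alpha,\beta\in\mathbb{F}_q$ we have $s(w)=\beta\lambda$ (as $s(u)=0$), so the $s$-polynomial $g(s)$ evaluated at $w$ equals $\Sigma_{i=2}^{m}\lambda^{m-i}a_i(\beta\lambda)^i=\lambda^m g(\beta)$. Hence ${\rm im}\,g(s)\subseteq\mathbb{F}_q\lambda^m$, and since $\deg g<q$ the polynomial $g$ is not identically zero on $\mathbb{F}_q$, so this image spans the line $\mathbb{F}_q\lambda^m$.

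Next, fix $\gamma\in\mathbb{F}_q^*$ with $g(x)+\gamma x$ a PP of $\mathbb{F}_q$. For each $\alpha_0\in\mathbb{F}_q$ put $b=\alpha_0u+v$ and define a linearized polynomial $L$ on the basis $\{u,b\}$ by $L(b)=\lambda^m$ and $L(u)=c$ for any $c\notin\mathbb{F}_q\lambda^m$; then $\{\lambda^m,c\}$ is a basis, so $L$ is rank $2$ and $L(\ker s)=\mathbb{F}_qc\neq\mathbb{F}_q\lambda^m$. I claim $f=g(s)+\gamma L(x)$ is a PP. Writing $w=\alpha u+\beta v=(\alpha-\beta\alpha_0)u+\beta b$ and using $s(u)=0$, $s(b)=\lambda$, linearity of $L$, and $L(b)=\lambda^m$, I would compute
\[ f(w)=\lambda^m g(\beta)+\gamma\big((\alpha-\beta\alpha_0)c+\beta\lambda^m\big)=\big(g(\beta)+\gamma\beta\big)\lambda^m+\gamma(\alpha-\beta\alpha_0)c. \]
Since $\{\lambda^m,c\}$ is a basis, $f(w_1)=f(w_2)$ gives $g(\beta_1)+\gamma\beta_1=g(\beta_2)+\gamma\beta_2$ and $\gamma(\alpha_1-\beta_1\alpha_0)=\gamma(\alpha_2-\beta_2\alpha_0)$; the first equality forces $\beta_1=\beta_2$ because $g(x)+\gamma x$ permutes $\mathbb{F}_q$, and then the second (using $\gamma\neq 0$) forces $\alpha_1=\alpha_2$, so $w_1=w_2$. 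As $m<q$, $f$ is monic of degree $qm<q^2$ with $f(0)=0$, hence normalized; and $\gamma L(x)$ is a rank $2$ linearized polynomial with $\gamma L(\ker s)\neq\mathbb{F}_q\lambda^m$, so $f$ has exactly the asserted shape with $\gamma L(x)$ playing the role of $L(x)$.

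For the count, as $\alpha_0$ ranges over the $q$ elements of $\mathbb{F}_q$ and $c$ over the $q^2-q$ vectors outside $\mathbb{F}_q\lambda^m$, the rank $2$ maps $\gamma L(x)$ obtained this way are pairwise distinct: from such a map $\Lambda$ one recovers $\alpha_0$ as the unique scalar with $\Lambda(v)+\alpha_0\Lambda(u)\in\mathbb{F}_q\lambda^m$, then $\gamma$ from $\gamma\lambda^m=\Lambda(v)+\alpha_0\Lambda(u)$ (using $\lambda^m\neq 0$ fixed), then $c=\Lambda(u)/\gamma$. This yields $q^2(q-1)$ such maps for each of the $k$ admissible $\gamma$, hence $kq^2(q-1)$ distinct rank $2$ linearized polynomials, none mapping $\ker s$ to the span of $\lambda^m$, each making $s^m+\Sigma_{i=2}^{m-1}\lambda^{m-i}a_is^i+L(x)$ a normalized PP. There is no genuine obstacle here — the permutation argument is a direct transcription of \tref{thm:Fp_to_Fpsquare} — and the only points needing attention are the distinctness bookkeeping just described and confirming that $g$ restricts to a nonzero function on $\mathbb{F}_q$ (so the setup is consistent); alternatively one could shortcut the whole thing by first treating the trace case $s=x^q+x$, $\lambda=1$ and then invoking the reduction in \cref{cor:pp_from_base}.
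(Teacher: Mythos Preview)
Your proposal is correct and follows essentially the same approach as the paper: both transcribe the argument of \tref{thm:Fp_to_Fpsquare}, replacing $s^m$ by the $s$-polynomial $g(s)$ and $x^m+\gamma x$ by $g(x)+\gamma x$. The only organizational difference is that the paper first reduces to the trace case $s=x^q+x$, $\lambda=1$ and then appeals to \cref{cor:pp_from_base} for general $s$ (exactly the shortcut you mention at the end), whereas you work directly with arbitrary $s$ and $\lambda$; and for the count, the paper argues via projective lines of scalar multiples of monic $L$, while your parametrization by $(\gamma,\alpha_0,c)$ together with the explicit recovery procedure is a cleaner way to establish distinctness.
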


\begin{proof}
It is enough to consider $s= x^q+x$ and prove that there exists normalized PPs of the form $g(s)+L(x)$ where $L(x)$ is a rank 2 linearized polynomial that does not map the kernel of $s$ to $\mathbb{F}_q$. By arguments similar to the ones in \cref{cor:pp_from_base}, the above theorem would then follow.

Consider a linear map $L(x)$ that does not map the kernel of $s$ to $\mathbb{F}_q$. Consider a basis $\{u,v\}$ of $\mathbb{F}_{q^2}$ where  $u \in \ker s$ and $v = L^{-1}(1)$. Let $s(v) = z$ for some $z \in \mathbb{F}_q$. We claim that $f = g(s) + \gamma zL(x)$ is a PP of $\mathbb{F}_{q^2}$, when $g(x) + \gamma x$ is a PP of $\mathbb{F}_q$.

It is enough to show that for distinct $w_1,w_2 \in \mathbb{F}_{q^2}$, $f(w_1) \not= f(w_2)$. Let $w_i = \alpha_i u + \beta_i v$ with $\alpha_i,\beta_i \in \mathbb{F}_q$.  Then \begin{eqnarray*} f(w_i) = g(s)(w_i) + \gamma zL(w_i) & = & g(\beta_is(v)) + \gamma zL(w_i) \\ & = & g(\beta_i z) + \gamma z \alpha_i L(u) + \gamma z\beta_i \end{eqnarray*} 
Since $g(x) + \gamma x$ is a PP of $\mathbb{F}_q$, therefore $g(\beta_iz) + \gamma\beta_iz \in \mathbb{F}_q$ are distinct for distinct $\beta_i$. As $L(u) \not\in \mathbb{F}_q$, and both $\gamma, z \in \mathbb{F}_q$, therefore the only way $f(w_1) = f(w_2)$ is if $\alpha_1 = \alpha_2$ and $\beta_1 = \beta_2$, that is $w_1 = w_2$.

In order to count the number of distinct $L(x)$ that can be added to a particular s-polynomial to give normalized PPs of $\mathbb{F}_{q^2}$, let us consider monic $L(x)$. There are $q(q-1)$ monic rank 2 linearized polynomials. Each such $L(x)$ can be multiplied by $\eta \in \mathbb{F}_{q^2}^*$ to give other rank 2 linear maps. If $\eta \in \mathbb{F}_q$, then following the construction given above, one gets back the normalized PP that one already has counted (in other words, the counting is to be done using projective geometry arguments). So it is enough to consider only one $\eta$ from each line in $\mathbb{F}_{q^2}$.  There is a line of $q-1$ elements that map $L(u)$ to $\mathbb{F}_q$ and therefore $\eta$ corresponding to this line is not to be counted. Each of the other $q$ lines yields as many PPs as the number of $\gamma$ that make $g(x) + \gamma x$ a PP of $\mathbb{F}_q$. So there are $q(q-1)$ monic $L(x)$,  each yielding $k$ PPs from each of the $q$ appropriate lines, totalling to $kq^2(q-1)$. 
\end{proof}

We now demonstrate how to find the compositional inverses of PPs generated using \tref{thm:from_perm}. 

\begin{theorem}
Let $g(x) = x^m + \Sigma_{i=2}^{m-1} a_ix^i$ with $a_i \in \mathbb{F}_q$ be such that $g(x) + \gamma x$ is a PP of $\mathbb{F}_q$ for some $\gamma \in \mathbb{F}_q^*$. The compositional inverse of normalized PPs of the form $s^m + \Sigma_{i=2}^{m-1} \lambda^{m-i}a_is^i + L(x)$ for appropriate rank 1 linearized polynomial $s = x^q + \delta_ix$, $\lambda$ belonging to the image of $s$ and  where $L(x)$ is a rank $2$ linearized polynomial that does not map the kernel of $s$ to the span of $\lambda^m$ is of the form $h(x) = g_1(x^q + \delta_jx) + M(x)$ where $M(x)$ is a rank $2$ linearized polynomial that does not map the kernel of $x^q + \delta_jx$ to the span of image $g_1(x^q + \delta_jx)$.

\end{theorem}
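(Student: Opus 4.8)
The plan is to follow the template of \tref{thm:Fp_to_Fosquare_comp_inv}, after first using the rescaling device of \cref{cor:pp_from_base}--\cref{cor:gen_s} to reduce to the trace-like case. Writing $g(s)=\lambda^{m}g(\lambda^{-1}s)$ with $g(x)=x^{m}+\Sigma_{i=2}^{m-1}a_ix^{i}$ and putting $\eta=\lambda^{-1}$, the linearized polynomial $\eta s$ has image $\mathbb{F}_q$ and the same kernel as $s$, so up to a scalar it suffices to treat $s$ with $\mathrm{im}\,s=\mathbb{F}_q$ (e.g.\ $s=x^{q}+x$, $\lambda=1$), a rank $2$ linearized polynomial $L$ with $L(\ker s)\not\subseteq\mathbb{F}_q$, and the PP $f=g(s)+L(x)$. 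Fix $u\in\ker s$ and let $v$ generate the line $L^{-1}(\mathbb{F}_q)$; since $L(\ker s)\not\subseteq\mathbb{F}_q$ this line is not $\ker s$, so $\{u,v\}$ is a basis and $z:=s(v)\in\mathbb{F}_q^{*}$. With $\gamma:=L(v)/z$ one checks, as in the proof of \tref{thm:from_perm}, that $f$ permutes $\mathbb{F}_{q^{2}}$ iff $P:=g+\gamma\,\mathrm{id}$ permutes $\mathbb{F}_q$ (note $P(0)=0$), and that $f(w)=P(\beta z)\cdot 1+\alpha L(u)$ for $w=\alpha u+\beta v$. As $L(u)\notin\mathbb{F}_q$, $\{1,L(u)\}$ is a basis, whence $f^{-1}$ is the map $y=c_{1}\cdot 1+c_{2}L(u)\mapsto z^{-1}P^{-1}(c_{1})\,v+c_{2}\,u$, where $P^{-1}$ is the reduced compositional inverse of $P$ on $\mathbb{F}_q$.

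The remaining task is to recast this $f^{-1}$ as $g_1(x^{q}+\delta_jx)+M(x)$. Pick $\delta_j$ with $L(u)\in\ker(x^{q}+\delta_jx)=:\ker s_j$; then $\delta_j\neq -1$ (else $L(u)\in\mathbb{F}_q$), so $\lambda_j:=s_j(1)\neq 0$, $c_{1}=\lambda_j^{-1}s_j(y)$, and $c_{2}=\rho(y)$ for the linearized $\rho$ with $\rho(1)=0$, $\rho(L(u))=1$. Expanding the reduced polynomial $P^{-1}(\lambda_j^{-1}t)=\Sigma_{i\ge 1}\beta_it^{i}$ (degree $\le q-2$, no constant term) leads one to set
\[
g_1(t):=z^{-1}v\,\Sigma_{i\ge 2}\beta_it^{i}+\mu\lambda_j^{-1}v\,t,\qquad M(y):=\rho(y)\,u+\big(z^{-1}\beta_1-\mu\lambda_j^{-1}\big)v\,s_j(y),
\]
for a parameter $\mu\in\mathbb{F}_q$; by construction $g_1(s_j)+M=f^{-1}$. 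Here $M(L(u))=u$ and $M(1)=(z^{-1}b_1-\mu)v$ with $b_1:=\beta_1\lambda_j$ the linear coefficient of the reduced $P^{-1}$, so $M$ has rank $2$ exactly when $\mu\neq z^{-1}b_1$; pick such a $\mu$. For this $\mu$, $\mathrm{im}\,g_1(s_j)$ is contained in and spans the line $\mathbb{F}_q v$ (the map $c\mapsto\mu c+z^{-1}\big(P^{-1}(c)-b_1c\big)$ on $\mathbb{F}_q$ is not identically zero, since $P$ has reduced form of degree $m\ge 2<q$ and hence is not $\mathbb{F}_q$-linear), whereas $M(\ker s_j)=M(\mathbb{F}_q L(u))=\mathbb{F}_q u\neq\mathbb{F}_q v$; thus $M$ does not map $\ker s_j$ into the span of $\mathrm{im}\,g_1(s_j)$. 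A one-line substitution $h(f(w))=g_1\!\big(P(\beta z)\lambda_j\big)+P(\beta z)M(1)+\alpha u=\beta v+\alpha u=w$ confirms $h=f^{-1}$, and undoing the rescaling (\cref{cor:gen_s}) returns the statement for arbitrary $s=x^{q}+\delta_ix$ and $\lambda\in\mathrm{im}\,s$.

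The step I expect to be the genuine obstacle is the one highlighted above: arranging that the linearized part $M$ of $f^{-1}$ has rank $2$. If one insists that $g_1$ be a bona fide s-polynomial (lowest term $s_j^{2}$), then ``$M$ rank $2$'' is equivalent to $b_1\neq 0$, i.e.\ to the reduced compositional inverse of $P=g+\gamma\,\mathrm{id}$ having a nonzero linear term; this holds in every case I have checked --- and in the Carlitz instance $m=(q+1)/2$ one even gets the explicit value $b_1=\gamma/(\gamma^{2}-1)$, reproducing \tref{thm:Fp_to_Fosquare_comp_inv} --- but it is not transparent in general, so I would instead sidestep it by letting $g_1$ absorb the linear term $\mu\lambda_j^{-1}v\,s_j$ and choosing $\mu\neq z^{-1}b_1$. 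A subsidiary point to watch is that $g_1(s_j)$ contributes no $x^{q}$ or $x$ term (so $M$ is unambiguously the ``linearized part''), which follows because $s_j^{\,i}$ has neither for $2\le i\le q-1$; and that the \cref{cor:gen_s} rescaling transports the rank-$2$ and image conditions faithfully.
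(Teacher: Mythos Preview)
Your argument is essentially the paper's own: choose a basis $\{u,v\}$ with $u\in\ker s$ and $v$ a preimage of the distinguished line under $L$, express $f(w)$ as $\phi(\beta z)$ in the $\lambda^m$-direction plus $\alpha L(u)$, take $s_j$ annihilating $L(u)$, use the compositional inverse $\psi$ of the base-field PP $\phi=g+\gamma x$ to build $g_1$ from the coefficients $b_i$ of $\psi$ (with the appropriate $s_j(\lambda^m)^{-i}$ normalisation), and define $M$ by $M(L(u))=u$, $M(\lambda^m)=\gamma b_1 v$. The paper does not pass through the trace case but works directly with $\lambda$; your rescaling reduction is harmless and the mechanics are otherwise identical.

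Where you diverge is exactly the point you flag: the paper simply declares $M$ to be rank $2$ and takes $g_1(x)=\Sigma_{i\ge 2}c_ix^i$ with no linear term, implicitly relying on $b_1\neq 0$ (the linear coefficient of $\psi$). It never verifies this, so your concern is legitimate rather than hypothetical. Your workaround---allowing $g_1$ to carry a linear term $\mu\lambda_j^{-1}v\,s_j$ and choosing $\mu\neq z^{-1}b_1$ to force $M$ to have rank $2$---is correct, and your check that $\mathrm{im}\,g_1(s_j)$ then spans exactly $\mathbb{F}_q v$ while $M(\ker s_j)=\mathbb{F}_q u$ is sound. Note, however, that once $g_1$ contains a linear term your ``subsidiary point'' (that $g_1(s_j)$ contributes no $x^q$ or $x$ term) no longer holds, so the split into $g_1(s_j)+M(x)$ is no longer the canonical nonlinear/linear decomposition; this is fine for the statement as literally written, but it does step outside the paper's $s$-polynomial convention ($g_1$ beginning at degree $2$). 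In short: same proof skeleton, but you have patched a point the paper leaves unexamined.
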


\begin{proof}
We can rewrite the normalized PP $s^m + \Sigma_{i=2}^{m-1} \lambda^{m-i}a_is^i + L(x)$ as $f(x) = \lambda^m g(\lambda^{-1}s) + L(x)$. Consider a basis of $\mathbb{F}_{q^2}$, $\{u,v\}$ where $u \in \ker s$ and $v = L^{-1}(\lambda^m)$. A generic element of $\mathbb{F}_{q^2} \ni w = \alpha u + \beta v$ with $\alpha, \beta \in \mathbb{F}_q$, then maps to $f(w) = \lambda^mg(\lambda^{-1}\beta s(v)) + L(w) = \lambda^mg(\beta z) + \alpha L(u) + \lambda^m\beta$, where $z = \lambda^{-1}s(v) \in \mathbb{F}_q$, as $\lambda \in {\rm im}\,s$.

Observe the action of the PP $f(x) = \lambda^mg(\lambda^{-1}s) + L(x)$ on two specific one dimensional subspaces of $\mathbb{F}_{q^2}$. The subspace spanned by $u$ gets mapped to the subspace spanned by $L(u)$, that is, $f(\alpha u) = \alpha L(u)$ for $\alpha \in \mathbb{F}_q$. The other subspace of interest is the subspace spanned by $v$. Observe that $f(v) = \lambda^mg(\lambda^{-1}s(v)) + L(v) = \lambda^mg(z) + \lambda^m$ and $f(\beta v) = \lambda^m(g(\beta z) + \beta)$ that lies in the span of $\lambda^m$. Thus this subspace spanned by $v$ maps onto the one dimensional subspace spanned by  $\lambda^m$. Let $\gamma = 1/z$. Then $f(\beta v) = \lambda^m(g(\beta z) + \gamma \beta z) = \lambda^m\phi(\beta z)$ where $\phi(x) = g(x) + \gamma x$. As $f(x)$ is a PP of $\mathbb{F}_{q^2}$ and the $q-1$ non-zero elements in the one dimensional space spanned by $v$ map bijectively to non-zero elements in the span of $\lambda^m$, therefore $\phi(x)$ must be a PP of $\mathbb{F}_q$.  Let $\psi(x) = \Sigma_{i=1}^n b_i x^i$ be the compositional inverse of $\phi(x)$ and therefore a PP of $\mathbb{F}_q$.

Consider a rank $2$ linearized polynomial $M$, such that $M(L(u)) = u$ and $M(L(v)) = M(\lambda^m) = \gamma b_1 v$, where $b_1 \in \mathbb{F}_q$ is the coefficient of the linear term in $\psi(x)$. Then \begin{eqnarray*} M(f(w)) = M(\lambda^m(g(\beta z) + \beta)) + M(\alpha L(u)) & = & M(\lambda^m\phi(\beta z)) + \alpha M(L(u)) \\ & = & \phi(\beta z)\gamma b_1 v + \alpha u \end{eqnarray*} Choose the rank $1$ monic polynomial $s_j = x^q + \delta_jx$ such that $L(u) \in \ker s_j$. Construct the polynomial $g_1(x) = \Sigma_{i=2}^n c_i x^i$ using the following relation $c_i = \gamma b_i v/s_j(\lambda^m)^i$ where $b_i$ are the coefficients of $\psi(x)$. Then we claim the polynomial $h(x) = g_1(x^q + \delta_jx) + M(x)$ is the compositional inverse of $f(x)$.

We have already seen that for $w = \alpha u + \beta v$, $f(w) = \lambda^m(g(\beta z) + \beta) + \alpha L(u) = \lambda^m\phi(\beta z) + \alpha L(u)$. Therefore, \begin{eqnarray*}
h(f(w)) = g_1(s_j(f(w)) + M(f(w)) & = & g_1(s_j(\lambda^m\phi(\beta z))) + \gamma b_1 \phi(\beta z)v + \alpha u \\ & = & g_1(\phi(\beta z)s_j(\lambda^m)) + \gamma b_1 \phi(\beta z)v + \alpha u \\ & = & \Sigma_{i=2}^n c_i \phi(\beta z)^i s_j(\lambda^m)^i + \gamma b_1 \phi(\beta z)v + \alpha u \\ \mbox{\rm (putting in the values of $c_i$}) & = & \Sigma_{i=2}^n \gamma b_i \phi(\beta z)^iv + \gamma b_1 \phi(\beta z)v + \alpha u \\ \mbox{\rm (by the definition of $\psi$)}& = & \gamma \psi(\phi(\beta z)) v + \alpha u \\ \mbox{\rm (as $\gamma z = 1$) } & = & \gamma \beta z v + \alpha u \\ & = & \beta v + \alpha u = w \end{eqnarray*} This proves that $h(x)$ is the compositional inverse of $f(x)$. Note that the rank $2$ map $M(x)$ is such that the $\ker s_j$ (which is the one dimensional subspace containing $L(u)$) does not map to the span of image of $g_1(s_j)$ (which is contained within the one dimensional subspace spanned by $v$). 
\end{proof}
Observe here that the linear maps that appear in $f(x)$ and its inverse, namely, the maps $L(x)$ and $M(x)$ are not inverses of one another. In fact, the two eigenvalues of the map $M(L(x))$ are $1, \gamma b_1$ (here $\gamma$ and $b_1$ are as defined in the proof above) with corresponding eigenvectors being $u,v$. Similarly, the eigenvalues of $L(M(x))$ are also $1,\gamma b_1$ and the corresponding eigenvectors are $L(u), L(v)$.
\begin{example}
There are $2801$ monic $s$-polynomials (including the monomials) over $\mathbb{F}_{7^2}$ whose coefficients are in $\mathbb{F}_7$. By \tref{thm:pp_from_basefield_poly} each of these $s$-polynomials, using the trace function, can be associated with at least $252$ different rank 2 linear maps to generate PPs. \tref{thm:from_perm} states that those $s$-polynomials arising out of PPs of $\mathbb{F}_7$ where a linear term has been removed, can generate extra PPs for $\mathbb{F}_{7^2}$. There are precisely $120$ normalized PPs on $\mathbb{F}_7$ which includes the PP $x$ (permutation corresponding to the identity permutation). Among the other $119$ normalized PPs of $\mathbb{F}_7$, precisely $15$ do not contain the linear term. As we already saw in \egref{ex:add_count}, $x^4 + 3x$ and $x^4 + 4x$ are two of those $104$ normalized PPs of $\mathbb{F}_7$ with linear terms. Both of these PPs induce the $s$-polynomial $s^4$. Therefore this $s$-polynomial has $2 \times 294 = 588$ extra PPs associated to it, making the total count $252 + 588 = 840$. The rest $102$ normalized PPs of $\mathbb{F}_7$ that have linear terms induce unique $s$-polynomials and so these $s$-polynomials have an additional $294$ PPs each, making their count of PPs $252 + 294 = 546$. Thus, out of a total of $2801$ monic $s$-polynomials from $\mathbb{F}_{7}[x]$, all utilizing the trace function for $s$, precisely one would generate $840$ normalized PPs, $102$ $s$-polynomials would generate $546$ normalized PPs each and the rest would generate $252$ normalized PPs. This is a total of $736074$ normalized PPs, all obtained using the trace polynomial. Note that all these $s$-polynomials could very well generate more PPs whilst using some other rank 1 linearized polynomial for $s$ instead of the trace function. Such PPs have not been accounted for in this example.  
\end{example}
Continuing with the trace function, if we look at more general $s$-polynomials whose coefficients may come from $\mathbb{F}_{q^2}\setminus\mathbb{F}_q$, then the situation becomes more complicated as the image of $g(s)$ is no longer restricted to a subspace of $\mathbb{F}_{q^2}$. Before exploring these polynomials, we first look at $s$-polynomials $g(s) = s^m + \Sigma_{i=2}^{m-1} a_i s^i$ with coefficients $a_i \in \mathbb{F}_q$ that can form normalized PPs with rank 1 linear maps $L(x)$, whilst $s = x^q + x$ is the trace polynomial. As it turns out the condition is very similar to \tref{thm:coprime}.

\begin{theorem}\label{thm:rnk1coprime}
If $g(x) = x^m + \Sigma_{i=2}^{m-1} a_ix^i$ with $a_i \in \mathbb{F}_q$ is a PP of $\mathbb{F}_q$, then there are $q^2(q-1)$ normalized PPs of $\mathbb{F}_{q^2}$ having the form $f = g(x^q+x) +L(x)$, where $L(x)$ is a rank 1 linear map.
\end{theorem}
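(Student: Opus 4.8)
The plan is to mirror the proof of \tref{thm:coprime}, using crucially that here $s = x^q + x$ is the trace map of $\mathbb{F}_{q^2}$ over $\mathbb{F}_q$, so that $\ker s$ is one dimensional and ${\rm im}\, s = \mathbb{F}_q$. First I would record the two easy necessary conditions that a rank $1$ linearized polynomial $L(x)$ must satisfy. If $\ker L = \ker s$, then every one of the $q$ elements $w$ of $\ker s$ satisfies $f(w) = g(s(w)) + L(w) = g(0) = 0$ (recall $g$ has no constant term), so $f$ is not injective. If ${\rm im}\, L = \mathbb{F}_q$, then since $s(w) \in \mathbb{F}_q$ and $g \in \mathbb{F}_q[x]$ force $g(s(w)) \in \mathbb{F}_q$, every value $f(w) = g(s(w)) + L(w)$ lies in $\mathbb{F}_q$, so ${\rm im}\, f \subsetneq \mathbb{F}_{q^2}$. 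Hence only the $L$ with $\ker L \neq \ker s$ and ${\rm im}\, L \neq \mathbb{F}_q$ can possibly make $f$ a PP.

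For sufficiency, fix such an $L$ and choose a basis $\{u,v\}$ of $\mathbb{F}_{q^2}$ over $\mathbb{F}_q$ with $u \in \ker s$ and $v \in \ker L$; these are linearly independent precisely because $\ker L \neq \ker s$, and the same fact gives $v \notin \ker s$, so $z := s(v) \in \mathbb{F}_q^*$. Writing a generic element as $w = \alpha u + \beta v$ with $\alpha,\beta \in \mathbb{F}_q$, one computes $f(w) = g(\beta z) + \alpha \ell$ with $\ell := L(u)$, and $g(\beta z) \in \mathbb{F}_q$. The hypothesis ${\rm im}\, L \neq \mathbb{F}_q$ means $\ell \notin \mathbb{F}_q$, so $\mathbb{F}_q \oplus \mathbb{F}_q\ell = \mathbb{F}_{q^2}$; therefore $f(w_1) = f(w_2)$ forces both $g(\beta_1 z) = g(\beta_2 z)$ and $\alpha_1 \ell = \alpha_2 \ell$. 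Since $g$ permutes $\mathbb{F}_q$ and $z \neq 0$, the first equality yields $\beta_1 = \beta_2$, and the second yields $\alpha_1 = \alpha_2$, so $f$ is injective, hence a permutation polynomial; it is normalized since $f(0) = 0$ and $\deg g(s) \geq 2q > \deg L$. Note that, unlike \tref{thm:coprime}, no coprimality condition on the index is needed: the role played there by ``$x^m$ permutes $\mathbb{F}_q$'' is played here directly by the assumption that $g$ permutes $\mathbb{F}_q$, which is exactly what guarantees $|{\rm im}\, g(s)| = q$.

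It remains to count the admissible $L$. A rank $1$ $\mathbb{F}_q$-linear map on $\mathbb{F}_{q^2}$ is specified uniquely by the triple consisting of its kernel (a one dimensional subspace), its image (a one dimensional subspace), and the induced isomorphism between $\mathbb{F}_{q^2}/\ker L$ and ${\rm im}\, L$. As $\mathbb{F}_{q^2}$ has $q+1$ one dimensional subspaces, and there are $q-1$ isomorphisms between any two of them, and the kernel and image may be prescribed independently, the constraints $\ker L \neq \ker s$ and ${\rm im}\, L \neq \mathbb{F}_q$ (each excluding exactly one subspace) leave $q \cdot q \cdot (q-1) = q^2(q-1)$ maps $L$. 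Since $g(s)$ is fixed, distinct $L$ give distinct polynomials $f = g(s) + L(x)$, so there are exactly $q^2(q-1)$ normalized PPs of the asserted shape.

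The only place demanding any care is the counting step, in particular the observation that the kernel and the image of a rank $1$ map can be chosen independently and that the two exclusions are genuinely independent restrictions; this is elementary linear algebra over $\mathbb{F}_q$ and recovers the same count as \cref{cor:coprime_count}. The conceptual heart of the argument is simply that the trace structure of $s$ collapses the verification to the direct-sum decomposition $\mathbb{F}_{q^2} = \mathbb{F}_q \oplus \mathbb{F}_q\ell$, after which injectivity of $f$ factors into the injectivity of $g$ on $\mathbb{F}_q$ and the identity on the complementary line.
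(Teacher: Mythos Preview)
Your proof is correct and follows essentially the same approach as the paper: both mirror the argument of \tref{thm:coprime}, using that $s=x^q+x$ has image $\mathbb{F}_q$ so that the hypothesis ``$g$ permutes $\mathbb{F}_q$'' replaces the coprimality condition, and both count the admissible rank~$1$ maps $L(x)$ by excluding the forbidden kernel and image. Your version is slightly more thorough in that you also verify the necessity of the two conditions on $L$ and argue injectivity explicitly via the basis decomposition, whereas the paper establishes only sufficiency and counts via the parametrization $L(x)=\eta(x^q+\delta_j x)$; the counts agree.
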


\begin{proof}
The proof runs along the lines of the proof of \tref{thm:coprime}. As $g(x)$ is a permutation polynomial of $\mathbb{F}_q$ and the image of the trace function $x^q+x$ is $\mathbb{F}_q$, therefore the image of $g(x^q+x)$ is precisely equal to $\mathbb{F}_q$. As a result, if one takes a rank 1 linear map $L(x)$ whose image is a one dimensional subspace which is not $\mathbb{F}_q$ and whose kernel is not the same as the kernel of the trace function, then the image of $f = g(x^q+x) + L(x)$ has cardinality $q^2$ and the kernel of $f$ is only $\{0\}$.

The number of monic linearized polynomials $L(x)$ of rank $1$ whose kernel is not the same as that of the trace function is $q$. Each of these monic linearized polynomials can be multiplied by $\eta \in \mathbb{F}_{q^2}^*$ to give us other rank 1 $L(x)$. As we need to ensure that the image of $L(x)$ is not $\mathbb{F}_q$, therefore the number of choices of $\eta$ is restricted to $q^2-q$ for each monic rank 1 linearized polynomial. Thus, the total number of normalized PPs of the form $g(x^q+x) + L(x)$ with rank 1 $L(x)$ is $q^2(q-1)$.  
\end{proof}

\begin{example}
Continuing with the earlier example, there are $15$ monic PPs of $\mathbb{F}_7$ having the form $g(x) = x^m + \Sigma_{i=2}^{m-1} a_ix^i \in \mathbb{F}_7[x]$. All of them generate $294$ PPs of $\mathbb{F}_{7^2}$, of the form   $g(x^7+x)+L(x)$ with rank $1$ $L(x)$.
\end{example}

Using \cref{cor:pp_from_base}, we can state the equivalent result for other $s = x^q + \delta_ix$, where $\delta_i$ is a $(q+1)$-th root of unity.

\begin{corollary}\label{cor:gen_srnk1}
If $x^m + \Sigma_{i=2}^{m-1} a_ix^i$ with $a_i \in \mathbb{F}_q$ is a PP of $\mathbb{F}_q$, then every $s$-polynomial of the form $g(s) = s^m + \Sigma_{i=2}^{m-1} \lambda^{m-i}a_is^i$, where $s = x^q + \delta_ix$ is a rank 1 linearized polynomial and $\lambda$ is in the image of $s$, produces $q^2(q-1)$ normalized PPs of the form $f = g(s) + L(x)$, where $L(x)$ is a rank 1 linearized polynomial. 
\end{corollary}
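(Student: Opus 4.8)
The plan is to combine the $\lambda$-rescaling trick already used in \cref{cor:pp_from_base} with \tref{thm:rnk1coprime}, exactly as \cref{cor:pp_from_base} combined that trick with \tref{thm:pp_from_basefield_poly}. The first observation I would make is that the proof of \tref{thm:rnk1coprime} uses nothing about the trace polynomial beyond the facts that it is a rank $1$ linearized polynomial whose image is $\mathbb{F}_q$. Consequently, if $t$ is \emph{any} rank $1$ linearized polynomial of $\mathbb{F}_{q^2}$ with ${\rm im}\, t = \mathbb{F}_q$, and $g(x) = x^m + \Sigma_{i=2}^{m-1} a_i x^i$ with $a_i \in \mathbb{F}_q$ is a PP of $\mathbb{F}_q$, then ${\rm im}\, g(t) = g(\mathbb{F}_q) = \mathbb{F}_q$ and hence $g(t) + L(x)$ permutes $\mathbb{F}_{q^2}$ whenever $L(x)$ is a rank $1$ linearized polynomial with $\ker L(x) \neq \ker t$ and ${\rm im}\, L(x) \neq \mathbb{F}_q$. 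The same counting as in \tref{thm:rnk1coprime} applies: by \lref{lem:rank1_L} there are $q$ monic rank $1$ linearized polynomials with kernel different from $\ker t$, and each admits $q^2-q$ scalar multiples whose image is not $\mathbb{F}_q$, giving exactly $q^2(q-1)$ admissible $L(x)$.

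Next I would feed in the data of the corollary. Given $s = x^q + \delta_i x$ and $\lambda$ in the image of $s$, set $\eta = \lambda^{-1}$; then $t := \eta s$ is rank $1$ with ${\rm im}\, t = \eta\cdot{\rm im}\, s = \mathbb{F}_q$ and $\ker t = \ker s$. Applying the previous paragraph, there are $q^2(q-1)$ rank $1$ linearized polynomials $L(x)$ with $\ker L(x) \neq \ker s$ and ${\rm im}\, L(x) \neq \mathbb{F}_q$ for which $g(\eta s) + L(x)$ permutes $\mathbb{F}_{q^2}$. Since
\[
\lambda^m \bigl(g(\eta s) + L(x)\bigr) = \lambda^m\Bigl(\eta^m s^m + \Sigma_{i=2}^{m-1} a_i \eta^i s^i\Bigr) + \lambda^m L(x) = s^m + \Sigma_{i=2}^{m-1} \lambda^{m-i} a_i s^i + \lambda^m L(x),
\]
and multiplication by the nonzero scalar $\lambda^m$ preserves the permutation property, each such $L(x)$ yields a permutation polynomial $f = g(s) + \lambda^m L(x)$ which is monic of degree $qm$ with zero constant term, i.e., a normalized PP of precisely the shape claimed (here $g(s) = s^m + \Sigma_{i=2}^{m-1}\lambda^{m-i}a_i s^i$).

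It then remains to check that the count transfers. The map $L(x) \mapsto \lambda^m L(x)$ is a bijection on rank $1$ linearized polynomials that leaves the kernel unchanged and sends ${\rm im}\, L(x)$ to $\lambda^m\cdot{\rm im}\, L(x)$; it therefore carries the $q^2(q-1)$ admissible $L(x)$ bijectively onto the rank $1$ linearized polynomials $M(x)$ with $\ker M(x) \neq \ker s$ and ${\rm im}\, M(x)$ not equal to the one dimensional space spanned by $\lambda^m$, which is exactly ${\rm im}\, g(s)$. Since distinct $L(x)$ give $f$'s differing only in their linear part, hence distinct, there are exactly $q^2(q-1)$ normalized PPs of the form $g(s) + M(x)$ with rank $1$ $M(x)$. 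I do not expect a genuine obstacle here: the whole argument rests on the (immediate) remark that \tref{thm:rnk1coprime} does not distinguish the trace among image-$\mathbb{F}_q$ rank $1$ maps, plus the $\lambda$-bookkeeping already performed in \cref{cor:pp_from_base}; the only point needing care is verifying that the exponent arithmetic $\eta^i\lambda^m = \lambda^{m-i}$ matches the stated $s$-polynomial and that the kernel and image conditions on $L(x)$ translate correctly under scaling by $\lambda^m$, so that the count $q^2(q-1)$ carries over verbatim.
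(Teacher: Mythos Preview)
Your proposal is correct and follows essentially the same route as the paper: both arguments rewrite $g(s)$ as $\lambda^m g'(\lambda^{-1}s)$ with $g'\in\mathbb{F}_q[x]$, observe that $\lambda^{-1}s$ has image $\mathbb{F}_q$ so that the image of $g(s)$ is the span of $\lambda^m$, and then invoke the kernel/image conditions and counting from \tref{thm:rnk1coprime}. The only cosmetic difference is that you first produce permutations $g(\eta s)+L(x)$ and then multiply by $\lambda^m$, whereas the paper works directly with $g(s)+L(x)$ and identifies ${\rm im}\,g(s)$ as the line through $\lambda^m$; your extra bookkeeping on the bijection $L\mapsto\lambda^m L$ is welcome but not required, since the paper simply appeals to the count in \tref{thm:rnk1coprime}.
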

\begin{proof}
We re-write $g(s)$ as $\lambda^mg'(\lambda^{-1}s)$ where $g'(x) = \Sigma_{i=2}^m a_ix^i$. As $\lambda$ is in the image of $s = x^q + \delta_ix$, therefore the image of $\lambda^{-1}s$ lies in $\mathbb{F}_q$. Since $g'(x)$ is a PP of $\mathbb{F}_q$, therefore the image of $g'(\lambda^{-1}s)$ is precisely equal to $\mathbb{F}_q$ and consequently the image of $\lambda^mg'(\lambda^{-1}s)$ is precisely equal to the span of $\lambda^m$. As a result, if one takes any rank $1$ linear map $L(x)$ whose image is not the span of $\lambda^m$ and whose kernel is not the same as the kernel of $s$, then image of  $g(s) + L(x)$ has cardinality $q^2$ and its kernel contains only $\{0\}$ and therefore is a PP of $\mathbb{F}_{q^2}$. The count follows by using the same arguments as in \tref{thm:rnk1coprime}.
\end{proof}
\begin{example}
Over $\mathbb{F}_{11^2}$, the polynomial $g(x)=x^6+x^5+5x^4+9x^3+2x^2$ gives $1210$ PPs of the form $g(x^{11}+x) + L(x)$ where $L(x)$ is a rank $1$ linearized polynomial. $g(x)$ is a PP of $\mathbb{F}_{11}$. For the rank $1$ linearized polynomial $s_i=x^{11}+a^{10}x$, one can modify $g(x)$, to $g_1(x)=x^6+ a^{23} x^5+5a^{46} x^4+9a^{69} x^3+2a^{92} x^2$ as $a^{23} \in {\rm im}\,(s_i)$. Then one obtains $1210$ PPs of the form $g_1(x^{11}+a^{10}x) + L(x)$ where $L(x)$ is a rank $1$ linearized polynomial.
\end{example}

We now provide the algorithm to find the compositional inverses of PPs of the form $g(s) + L(x)$, where $L(x)$ is a rank $1$ linearized polynomial.

\begin{corollary}
Let $g(x) = x^m + \Sigma_{i=2}^{m-1}a_ix^i$ with $a_i \in \mathbb{F}_q$ be a PP of $\mathbb{F}_q$. Let $s = x^q + \delta_ix$ be a rank $1$ linearized polynomial and $\lambda \in {\rm im}\, s$. Then the compositional inverse of a PP of $\mathbb{F}_{q^2}$ of the form $f(x) = \lambda^mg(\lambda^{-1}s) + L(x)$ where $L(x)$ is a rank $1$ linearized polynomial, is of the form $h(x) = \kappa\psi(\lambda_1^{-1}s_j) + M(x)$ where $s_j$ and $M(x)$ are rank $1$ linearized polynomials, $\psi(x)$ is the compositional inverse of $g(x)$ over $\mathbb{F}_q$, $\lambda_1 \in {\rm im}\, s_j$ and $\kappa \in \mathbb{F}_{q^2}$.
\end{corollary}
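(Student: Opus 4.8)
The plan is to adapt the argument of \lref{lem:cat2inv} (the monomial, rank $1$ case) to general $s$-polynomials, using the bookkeeping style of \cref{cor:gen_s}. First I would fix a basis $\{u,v\}$ of $\mathbb{F}_{q^2}$ with $u\in\ker s$ and $v\in\ker L$; these are linearly independent precisely because the hypothesis of \cref{cor:gen_srnk1} that makes $f$ a PP forces $\ker L\neq\ker s$. Evaluating $f = \lambda^m g(\lambda^{-1}s) + L(x)$ at a generic $w=\alpha u+\beta v$ with $\alpha,\beta\in\mathbb{F}_q$, one gets $s(w)=\beta s(v)$, hence $f(w) = \lambda^m g(\beta z) + \alpha L(u)$, where $z := \lambda^{-1}s(v)\in\mathbb{F}_q^{*}$ (nonzero since $v\notin\ker s$) and $g(\beta z)\in\mathbb{F}_q$. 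Here $L(u)\neq 0$ (as $u\notin\ker L$), and since the other PP condition of \cref{cor:gen_srnk1} says ${\rm im}\,L\neq\langle\lambda^m\rangle$, the pair $\{\lambda^m, L(u)\}$ is a basis of $\mathbb{F}_{q^2}$ over $\mathbb{F}_q$.

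Next I would build $h$ piece by piece. Choose the rank $1$ linearized polynomial $s_j = x^q+\delta_jx$ whose kernel contains $L(u)$ (there is exactly one such $(q+1)$-th root of unity $\delta_j$), and put $\lambda_1 := s_j(\lambda^m)$, which is a nonzero element of ${\rm im}\,s_j$ because $\lambda^m\notin\ker s_j=\langle L(u)\rangle$. Let $M(x)$ be the rank $1$ linearized polynomial determined on the basis $\{\lambda^m,L(u)\}$ by $M(\lambda^m)=0$ and $M(L(u))=u$, and set $\kappa := v/z$. With $\psi$ the compositional inverse of $g$ over $\mathbb{F}_q$ (so $\psi(0)=0$), the claim is that $h(x)=\kappa\,\psi(\lambda_1^{-1}s_j) + M(x)$ inverts $f$.

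The verification is then a short computation in the spirit of \cref{cor:gen_s}. Applying $s_j$ to $f(w)=\lambda^m g(\beta z)+\alpha L(u)$ and using $\mathbb{F}_q$-linearity of $s_j$ together with $s_j(L(u))=0$ gives $s_j(f(w))=g(\beta z)\,\lambda_1$, hence $\lambda_1^{-1}s_j(f(w))=g(\beta z)$ and $\psi(\lambda_1^{-1}s_j(f(w)))=\beta z$, so that $\kappa\,\psi(\lambda_1^{-1}s_j(f(w)))=\beta v$. Similarly, $M(f(w))=g(\beta z)M(\lambda^m)+\alpha M(L(u))=\alpha u$, because $\lambda^m\in\ker M$. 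Adding the two contributions yields $h(f(w))=\alpha u+\beta v=w$; since $h\circ f$ is the identity on the finite set $\mathbb{F}_{q^2}$, $h$ is a two-sided compositional inverse of $f$, and it has exactly the stated shape with $\kappa\in\mathbb{F}_{q^2}$, $\lambda_1\in{\rm im}\,s_j$ and $M(x),s_j$ rank $1$.

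The main obstacle is conceptual rather than computational: since $L$ is rank $1$ it has no linearized inverse, so one cannot simply transcribe \cref{cor:gen_s}. The "separation of variables" that recovers $\alpha$ and $\beta$ has to be split differently --- the linear part $M$ must send $\lambda^m$ to $0$ so that the term $g(\beta z)M(\lambda^m)$, which is nonlinear in $\beta$, vanishes, while all of the $\beta$-dependence is funneled through $\psi(\lambda_1^{-1}s_j(\cdot))$, exactly as the reciprocal exponent $n$ with $mn\equiv 1 \mod (q-1)$ did in \lref{lem:cat2inv}. The one place demanding care is the choice $\lambda_1=s_j(\lambda^m)$, which is what makes the argument of $\psi$ land precisely on $g(\beta z)$; any other normalization of $\lambda_1\in{\rm im}\,s_j$ merely rescales $\kappa$ accordingly.
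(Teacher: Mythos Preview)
Your proposal is correct and follows essentially the same approach as the paper's own proof: the same basis $\{u,v\}$ with $u\in\ker s$, $v\in\ker L$, the same choice of $s_j$ with $L(u)\in\ker s_j$, the same rank~$1$ map $M$ with $\lambda^m\in\ker M$, $M(L(u))=u$, and the same $\kappa=z^{-1}v$ and $\lambda_1=s_j(\lambda^m)$ (the paper writes $\eta=\lambda_1^{-1}$). Your write-up is in fact slightly more careful than the paper's in justifying that $\{u,v\}$ and $\{\lambda^m,L(u)\}$ are bases and that $\lambda_1\neq 0$.
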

\begin{proof}
Consider a basis $\{u,v\}$ of $\mathbb{F}_{q^2}$ where $u \in \ker s$ and $v \in \ker L(x)$. If $w$ is a generic element of $\mathbb{F}_{q^2}$, then  $w = \alpha u + \beta v$ with $\alpha, \beta \in \mathbb{F}_q$. Then $f(w) = \lambda^mg(\lambda^{-1}s(\beta v)) + L(\alpha u) = \lambda^mg(\beta z) + \alpha L(u)$ where $z = \lambda^{-1}s(v) \in \mathbb{F}_q$. Note that as $\lambda \in {\rm im}\,s$, therefore $\lambda^{-1}s(\beta v) \in \mathbb{F}_q$. 

Let $s_j = x^q + \delta_jx$ be a rank $1$ linearized polynomial such that $L(u) \in \ker s_j$. Let $\eta = 1/s_j(\lambda^m)$. Then $\eta s_j(f(w)) = \eta s_j(\lambda^m g(\beta z)) = g(\beta z)$. Choose rank $1$ linearized polynomial $M(x)$ such that $\lambda^m \in \ker M$ and $M(L(u)) = u$.  
As $g(x)$ is a PP of $\mathbb{F}_q$, it has a compositional inverse $\psi(x)$ which is a PP of $\mathbb{F}_q$. Then $h(x) = z^{-1}v \psi(\eta s_j) + M(x)$ is the compositional inverse of $f(x)$. This follows from \begin{eqnarray*} h(f(w)) = h(\lambda^m g(\beta z) + \alpha L(u)) & = & (z^{-1}v \psi(\eta s_j) + M)(\lambda^mg(\beta z) + \alpha L(u)) \\ & = & z^{-1}v\psi(g(\beta z)) + \alpha M(L(u)) \\ & = & z^{-1}v\beta z + \alpha u \\ & = & \alpha u + \beta v = w \end{eqnarray*}
Note that $\eta = 1/s_j(\lambda^m)$ is indeed of the form $\lambda_1^{-1}$ where $\lambda_1 \in {\rm im}\,s_j$ and $z^{-1}v$ plays the role of $\kappa$ in the statement of the corollary.
\end{proof}
We have hereby completed enumeration of all normalized PPs that arise from $s$-polynomials with coefficients from $\mathbb{F}_q$ for the trace polynomial combining with linearized polynomials of ranks $1$ and $2$. We have also indicated the equivalent $s$-polynomials for other instantiations of rank $1$ linearized polynomials $s = x^q + \delta_ix$ that result in similar enumerations. Explicit constructions of the compositional inverses of these normalized PPs were also obtained.  \\


Before we tackle the cases of most general $s$-polynomials, we now state a significant property that all normalized PPs of the form $f = g(s) + L(x)$ share. 

\begin{lemma}\label{lem:affine}
All normalized PPs of the form $f = g(s) +L(x)$ where $s = x^q + \delta_ix$ is a rank $1$ linearized polynomial and $L(x)$ is a linearized polynomial, maps a particular subspace $\mathcal{U}$ and affine spaces associated with it of the form $v + \mathcal{U}$ to a subspace $\mathcal{V}$ and affine spaces associated with it of the form $b + \mathcal{V}$. 
\end{lemma}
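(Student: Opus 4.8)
The plan is to exploit the one structural fact that distinguishes a rank $1$ linearized polynomial $s = x^q + \delta_i x$: its kernel is a single line, so $g(s)$ is constant on every coset of $\ker s$, and within such a coset a point is moved only by the linear part $L$. Concretely, the lemma is not a computation about a specific PP but a statement about how the affine structure of $\mathbb{F}_{q^2}$ (viewed as a $2$--dimensional $\mathbb{F}_q$--vector space) is respected.

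First I would set $\mathcal{U} := \ker s$, which is a one--dimensional $\mathbb{F}_q$--subspace of $\mathbb{F}_{q^2}$ because $s$ has rank $1$; I would fix a nonzero $u \in \mathcal{U}$ and extend it to a basis $\{u,v\}$ of $\mathbb{F}_{q^2}$ over $\mathbb{F}_q$, just as in the proof of \tref{thm:lowerbound}. Then I would define $\mathcal{V} := L(\mathcal{U}) = \langle L(u)\rangle$. Writing an arbitrary $w = \alpha u + \beta v$ with $\alpha,\beta \in \mathbb{F}_q$ and using linearity of $s$ together with $s(u) = 0$, one gets $s(w) = \beta\, s(v)$, hence
\[
f(w) \;=\; g\bigl(\beta\, s(v)\bigr) \;+\; \alpha\, L(u) \;+\; \beta\, L(v).
\]
From this I would first note that $L(u) \neq 0$: otherwise $f(w)$ would not depend on $\alpha$ for fixed $\beta$, contradicting that $f$ is a permutation; hence $\mathcal{V}$ is a genuine one--dimensional subspace.

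Next, reading the displayed identity with $\beta = \beta_0$ fixed, the coset $\beta_0 v + \mathcal{U} = \{\alpha u + \beta_0 v : \alpha \in \mathbb{F}_q\}$ is carried by $f$ onto $b_{\beta_0} + \mathcal{V}$, where $b_{\beta_0} := g(\beta_0\, s(v)) + \beta_0 L(v)$ is independent of $\alpha$. For $\beta_0 = 0$ the coset is $\mathcal{U}$ itself, and since $f$ is normalized we have $0 = f(0) \in f(\mathcal{U})$; a coset of $\mathcal{V}$ containing $0$ is $\mathcal{V}$, so $f(\mathcal{U}) = \mathcal{V}$. Finally, as $\{\beta_0 v + \mathcal{U} : \beta_0 \in \mathbb{F}_q\}$ partitions $\mathbb{F}_{q^2}$ into $q$ cosets of size $q$, each mapping onto a coset of $\mathcal{V}$ (also of size $q$), and $f$ is a bijection of $\mathbb{F}_{q^2}$, these $q$ image cosets must be exactly the $q$ distinct cosets of $\mathcal{V}$; so $f$ induces a bijection from the pencil of affine lines in direction $\mathcal{U}$ onto the pencil of affine lines in direction $\mathcal{V}$, which is the assertion.

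The only place I expect to have to be slightly careful is the degenerate possibility $L(u) = 0$, i.e.\ a rank $1$ map $L$ whose kernel coincides with $\ker s$ --- but this is precisely the configuration excluded by the hypothesis that $f$ is a permutation polynomial (indeed it is the configuration ruled out by \tref{thm:coprime} and its analogues), so after dispatching it the argument reduces to the one--line computation above.
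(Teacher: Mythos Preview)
Your proposal is correct and follows essentially the same route as the paper: set $\mathcal{U}=\ker s$, $\mathcal{V}=L(\mathcal{U})$, write $w=\alpha u+\beta v$, and compute $f(w)=g(\beta s(v))+\beta L(v)+\alpha L(u)\in b_\beta+\mathcal{V}$. You are somewhat more thorough than the paper's version (explicitly ruling out $L(u)=0$ and noting that the induced map on cosets is a bijection), but the underlying argument is identical.
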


\begin{proof}
The kernel of $s = x^q + \delta_ix$ forms the subspace $\mathcal{U}$ and the affine spaces associated with it can be thought of as $\alpha v + \mathcal{U}$, where $v \not \in \mathcal{U}$ and $\alpha \in \mathbb{F}_q$. Under the action of a normalized PP of the form $f = g(s) + L(x)$, $\mathcal{U}$ would map to $L(\mathcal{U}) = \mathcal{V}$. Any element $w$ in the affine space $\alpha v + \mathcal{U}$ would be of the form $w = \alpha v + u$ where $u \in \mathcal{U}$. Then $f(w) = g(s(w)) + L(w) = g(\alpha s(v)) + \alpha L(v) + L(u) \in b +\mathcal{V}$, where $b = g(\alpha s(v)) + \alpha L(v)$.  
\end{proof}

Using \lref{lem:affine} we can now characterize all $s$-polynomials in $\mathbb{F}_{q^2}[x]$ that give rise to normalized PPs of the form $f = g(s) + L(x)$. We shall first deal with the case where $L(x)$ is a rank 1 linearized polynomial.

\begin{theorem}\label{thm:allrnk1}
Consider any polynomial $g(x) = x^m + \Sigma_{i=2}^{m-1} a_i x^i \in \mathbb{F}_{q^2}[x]$. Let $s = x^q + \delta_ix$ and $v \in \mathbb{F}_{q^2}$ such that $0 \not = v \in {\rm im }\,\, s$.  Let $\mathfrak{G}$ be the collection of all elements that are differences of images under $g$ of distinct elements in span of $v$, that is $\mathfrak{G} = \{ g(\alpha v) - g(\beta v) \, \mid \, \alpha ,\beta \in \mathbb{F}_q, \alpha \not = \beta \}$. For every one dimensional subspace $\mathcal{L} \subset \mathbb{F}_{q^2}$ such that $\mathfrak{G} \cap \mathcal{L} = \emptyset$, there exists $q(q-1)$ rank 1 linearized polynomials $L(x)$ such that $f = g(s) + L(x)$ is a normalized PP of $\mathbb{F}_{q^2}$. Conversely, if $\mathfrak{G} \cap \mathcal{L} \not = \emptyset$ for every one dimensional subspace $\mathcal{L}$ of $\mathbb{F}_{q^2}$, then there exists no rank $1$ linearized polynomial $L(x)$ such that $f = g(s) + L(x)$ is a normalized PP of $\mathbb{F}_{q^2}$.
\end{theorem}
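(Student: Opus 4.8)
The plan is to mimic the proof of \tref{thm:coprime}: pass to a convenient $\mathbb{F}_q$-basis of $\mathbb{F}_{q^2}$ and reduce ``$f$ is a permutation'' to a two-variable injectivity check. The one new feature is that ${\rm im}\,g(s)$ is now a set of at most $q$ points rather than a line, so the role played in \tref{thm:coprime} by a single image line is taken over by the ``secant set'' $\mathfrak{G}$.

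First I would fix $u$ spanning $\ker s$ and choose any $v_0\in\mathbb{F}_{q^2}\setminus\ker s$, so that $\{u,v_0\}$ is a basis; since ${\rm im}\,s$ is one dimensional and $s(v_0)\neq 0$, we have $s(v_0)=\mu v$ for a unique $\mu\in\mathbb{F}_q^*$. For a generic $w=xu+yv_0$ with $x,y\in\mathbb{F}_q$,
\[
f(w)=g\bigl(s(w)\bigr)+L(w)=g(y\mu v)+xL(u)+yL(v_0),
\]
and, as $\mu\in\mathbb{F}_q^*$, the differences $g(y_1\mu v)-g(y_2\mu v)$ over pairs $y_1\neq y_2$ sweep out exactly $\mathfrak{G}$. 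If $\ker L=\ker s$ then $L(u)=0$ and $f(w)$ does not involve $x$, so $f$ is not injective; hence any usable $L$ has $L(u)\neq 0$. Writing ${\rm im}\,L=\mathcal{L}={\rm span}(\ell)$, $L(u)=a\ell$ with $a\in\mathbb{F}_q^*$, and $L(v_0)=b\ell$, the equation $f(w_1)=f(w_2)$ becomes
\[
g(y_1\mu v)-g(y_2\mu v)=\bigl((x_2-x_1)a+(y_2-y_1)b\bigr)\ell .
\]
When $y_1=y_2$ this forces $x_1=x_2$; when $y_1\neq y_2$ the left side lies in $\mathfrak{G}$ and the right side lies in $\mathcal{L}$, so a collision with $w_1\neq w_2$ exists precisely when $\mathfrak{G}\cap\mathcal{L}\neq\emptyset$ — the forward implication is immediate, and conversely, given $d=g(\alpha v)-g(\beta v)\in\mathfrak{G}\cap\mathcal{L}$ with $\alpha\neq\beta$, one puts $y_1=\alpha/\mu$, $y_2=\beta/\mu$ and solves the displayed equation for $x_1$, which is possible because $a\neq 0$. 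This yields the governing dichotomy: for a rank $1$ linearized polynomial $L$, $f=g(s)+L$ is a permutation polynomial if and only if $\ker L\neq\ker s$ and $\mathfrak{G}\cap{\rm im}\,L=\emptyset$; in that case $f$ is automatically normalized, being monic of degree $mq<q^2$ with $f(0)=g(0)=0$.

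The theorem then follows by counting. If $\mathcal{L}$ is a one dimensional subspace with $\mathfrak{G}\cap\mathcal{L}=\emptyset$, a rank $1$ linearized polynomial $L$ with ${\rm im}\,L=\mathcal{L}$ and $\ker L\neq\ker s$ is specified by a choice of $\ker L$ among the $q$ lines of $\mathbb{F}_{q^2}$ other than $\ker s$ together with an $\mathbb{F}_q$-isomorphism $\mathbb{F}_{q^2}/\ker L\to\mathcal{L}$, of which there are $q-1$; this gives $q(q-1)$ such maps (using that every $\mathbb{F}_q$-linear endomorphism of $\mathbb{F}_{q^2}$ is realised by a linearized polynomial), and each produces a normalized PP by the dichotomy. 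Conversely, if $\mathfrak{G}\cap\mathcal{L}\neq\emptyset$ for every one dimensional $\mathcal{L}$, then every rank $1$ map $L$ has ${\rm im}\,L$ meeting $\mathfrak{G}$, so no polynomial $f=g(s)+L$ is a permutation polynomial.

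I expect the only real obstacle to be organisational rather than mathematical: one must keep apart the two roles of $v$ — the fixed generator of ${\rm im}\,s$ that defines $\mathfrak{G}$, versus the basis vector $v_0\notin\ker s$ — and check that the reparametrisation $y\mapsto y\mu v$ traces out all of $\mathfrak{G}$ and nothing more. The degenerate case ${\rm im}\,s=\ker s$ (for instance $s$ the trace map in characteristic $2$) is harmless here precisely because $v$ is never used as a basis vector. Apart from this bookkeeping the argument is routine: once ${\rm im}\,g(s)$ is recognised as the point set $\{g(\alpha v):\alpha\in\mathbb{F}_q\}$, the obstruction to $f$ being a permutation is exactly whether ${\rm im}\,L$ is parallel to a secant of that point set.
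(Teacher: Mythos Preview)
Your proposal is correct and follows essentially the same argument as the paper's own proof: both reduce the question to whether ${\rm im}\,L$ meets the secant set $\mathfrak{G}$, and both count the admissible rank~$1$ maps as $q(q-1)$. The only organisational differences are that the paper chooses the second basis vector $v_1\in\ker L$ (so the term $yL(v_0)$ you carry simply vanishes), and it parametrises the $q(q-1)$ maps as $\eta(x^q+\delta_jx)$ rather than by ``choice of kernel times choice of $\mathbb{F}_q$-isomorphism'' --- but these are cosmetic.
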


\begin{proof}
Assume $f = g(s) + L(x)$ is a normalized PP of $\mathbb{F}_{q^2}$ where $L(x)$ is a rank $1$ linearized polynomial. Clearly, the kernels of $s$ and $L(x)$ cannot be the same subspace of $\mathbb{F}_{q^2}$. Let $\{u,v_1\}$ form a basis of $\mathbb{F}_{q^2}$ where $u \in \ker s$ and $v_1 \in \ker L(x)$. Then $L(u) \not = 0$. Let  $\alpha_1,\alpha_2 \in \mathbb{F}_q$ such that $g(s(\alpha_1 v_1)) = a_1$ and $g(s(\alpha_2 v_1)) = a_2$. If $a_1 - a_2 \in {\rm im }\, L(x)$, then one can find $\beta \in \mathbb{F}_q$ such that $L(\beta u) = a_1 - a_2$. Then $f(\alpha_1 v_1) = a_1 = a_2 + L(\beta u) = f(\alpha_2 v_1 + \beta u)$ and thus two distinct elements $\alpha_1 v_1$ and $\alpha_2 v_1 + \beta u$ both map to the same element $a_1$ under $f$ thereby contradicting the fact that $f = g(s) + L(x)$ is a PP. Therefore $\mathfrak{G} \cap \mathcal{L}$ must be the empty set where $\mathcal{L}$ is the image of $L(x)$. Clearly if $\mathfrak{G} \cap \mathcal{L} \not = \emptyset$ for every subspace $\mathcal{L}$, then we cannot find a rank $1$ linearized polynomial that would combine with $g(s)$ to form a PP.

Conversely, let $\mathcal{L}$ be a one dimensional subspace such that $\mathfrak{G} \cap \mathcal{L} = \emptyset$. Observe that $g(s(0)) = 0$ and so $0$ is in the image of $g(s)$. This implies that all nonzero elements in the image of $g(s)$ are guaranteed to be in $\mathfrak{G}$. Now $0 \in \mathcal{L}$ for every subspace $\mathcal{L}$. Therefore $0 \not \in \mathfrak{G}$. In other words, $g(\alpha_1v) \not = g(\alpha_2v)$ for distinct $\alpha_1,\alpha_2 \in \mathbb{F}_q$ and $v \in {\rm im}\, s$. This guarantees that the cardinality of the set  im $g(s)$ is precisely $q$. If we now choose any rank 1 linearized polynomial $L(x)$ whose image is $\mathcal{L}$ and $L(u) \not = 0$, then $f = g(s) + L(x)$ is a PP. This follows from the argument used in the proof of \tref{thm:coprime} : the cardinality of the image of $L(x)$ is also $q$ and the sum of the images of $g(s)$ and $L(x)$ gives $q^2$ distinct elements. 

There are $q$ monic linearized polynomials $x^q + \delta_jx$ which are rank 1 and satisfying the inequality $u^q + \delta_ju \not = 0$, where $s(u) = 0$. To align the image of these monic linearized polynomials to the subspace $\mathcal{L}$, one has a choice of $(q-1)$ elements $\eta \in \mathbb{F}_{q^2}^*$ for each of the monic rank 1 linearized polynomials, that is $L(x) = \eta(x^q + \delta_jx)$. Thus, one obtains $q(q-1)$ rank 1 linearized polynomials $L(x)$ for every subspace $\mathcal{L}$ that satisfies the condition $\mathfrak{G} \cap \mathcal{L} = \emptyset$. 
\end{proof}

The above theorem is a generalization of all the results in \tref{thm:coprime}, \cref{cor:coprime_count}, \tref{thm:rnk1coprime} and \cref{cor:gen_srnk1}. The number of rank 1 linearized polynomials that appeared in \cref{cor:coprime_count} was $q^2(q-1)$. This was due to the fact that the image of the monomial in \tref{thm:coprime} was precisely equal to a subspace of $\mathbb{F}_{q^2}$ and therefore all elements in $\mathfrak{G}$ belongs to that subspace of $\mathbb{F}_{q^2}$. Hence all the other $q$ one dimensional subspaces are eligible candidates for $\mathcal{L}$ in \tref{thm:allrnk1}. Similarly, the image of $g(s)$ in \tref{thm:rnk1coprime} and \cref{cor:gen_srnk1} is precisely equal to a subspace of $\mathbb{F}_{q^2}$. As a result, in those cases again, all other one dimensional spaces are eligible candidates for $\mathcal{L}$ of \tref{thm:allrnk1}. 

We now characterize all $s$-polynomials in $\mathbb{F}_{q^2}[x]$ that give rise to normalized PPs of the form $f = g(s) + L(x)$ where $L(x)$ is a rank $2$ linearized polynomial.

\begin{theorem}\label{thm:allrnk2}
Consider any polynomial $g(x) = x^m + \Sigma_{i=2}^{m-1} a_i x^i \in \mathbb{F}_{q^2}[x]$. Let $s = x^q + \delta_ix$ and $v \in \mathbb{F}_{q^2}$ such that $0 \not = v \in {\rm im }\, s$.  Let $\mathfrak{H}$ be the collection of all elements obtained in the following fashion $\mathfrak{H} = \{ \frac{g(\alpha v) - g(\beta v)}{\beta - \alpha} \, \mid \, {\rm where  }\,\, \alpha, \beta \in \mathbb{F}_q, \alpha \not = \beta \}$. For every one dimensional subspace $\mathcal{L} \subset \mathbb{F}_{q^2}$, consider  affine spaces $b + \mathcal{L}$. For every nontrivial affine space $b + \mathcal{L}$ such that $\mathfrak{H} \cap (b + \mathcal{L}) = \emptyset$, there exists $q(q-1)$ rank $2$ linearized polynomials $L(x)$ such that $f = g(s) + L(x)$ is a normalized PP of $\mathbb{F}_{q^2}$. Conversely, if $\mathfrak{H} \cap (b + \mathcal{L}) \not = \emptyset$ for every one dimensional affine space $b + \mathcal{L}$ of $\mathbb{F}_{q^2}$, then there exists no rank $2$ linearized polynomial $L(x)$ such that $f = g(s) + L(x)$ is a normalized PP of $\mathbb{F}_{q^2}$.
\end{theorem}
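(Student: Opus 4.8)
The plan is to run the argument of \tref{thm:Fp_to_Fpsquare} in a coordinate system adapted to $s$, and then to translate ``$f$ is a permutation polynomial'' into the combinatorial condition on $\mathfrak{H}$ stated in the theorem. First I would fix $u$ spanning $\ker s$ and a preimage $w$ with $s(w)=v$; since $v\neq 0$ we have $w\notin\ker s$, so $\{u,w\}$ is a basis of $\mathbb{F}_{q^2}$ over $\mathbb{F}_q$. Writing a generic element as $\alpha u+\beta w$ with $\alpha,\beta\in\mathbb{F}_q$ and using $s(\alpha u+\beta w)=\beta v$, one obtains $f(\alpha u+\beta w)=g(\beta v)+\alpha L(u)+\beta L(w)$. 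Under the standing convention $m<q$, the polynomial $g$ has no constant or linear term and $\deg g<q$, so $f=g(s)+L(x)$ is automatically monic with $f(0)=0$, hence a normalized polynomial; that aspect needs no checking.

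Next I would compare $f$ on two points $w_i=\alpha_i u+\beta_i w$. If $\beta_1=\beta_2$, then $f(w_1)=f(w_2)$ reduces to $(\alpha_1-\alpha_2)L(u)=0$, and since $L$ has rank $2$ we have $L(u)\neq 0$, forcing $\alpha_1=\alpha_2$; so no collision can occur in this case, regardless of $L$. If $\beta_1\neq\beta_2$, dividing the collision equation by $\beta_1-\beta_2$ — the one step requiring a little care, since one must observe that passing between $\beta_1-\beta_2$ and $\beta_2-\beta_1$ does not affect membership in the subspace $\mathrm{span}(L(u))$, which is exactly why the \emph{symmetric} set $\mathfrak{H}$, with $\beta-\alpha$ in the denominator, is the right object — shows that a collision exists precisely when $L(w)\in h+\mathrm{span}(L(u))$ for some $h\in\mathfrak{H}$. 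Setting $\mathcal{L}:=\mathrm{span}(L(u))$ and $b:=L(w)$, this yields the criterion: $f=g(s)+L(x)$ with $L$ of rank $2$ is a permutation polynomial of $\mathbb{F}_{q^2}$ if and only if $\mathfrak{H}\cap(b+\mathcal{L})=\emptyset$. Moreover, rank $2$ forces $L(u)$ and $L(w)$ to be linearly independent, so $b\notin\mathcal{L}$ and $b+\mathcal{L}$ is a nontrivial affine line; this explains the restriction to nontrivial affine spaces in the statement, and it immediately gives the ``only if'' half: if every nontrivial $b+\mathcal{L}$ meets $\mathfrak{H}$, no rank $2$ $L$ can produce a permutation polynomial.

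It remains to prove existence and the count for a fixed nontrivial affine line $A=b+\mathcal{L}$ with $\mathfrak{H}\cap A=\emptyset$. Here I would build the maps backwards: choose $L(u)$ to be any of the $q-1$ nonzero vectors of $\mathcal{L}$ and $L(w)$ to be any of the $q$ points of $A$. Since $b\notin\mathcal{L}$ we have $A\cap\mathcal{L}=\emptyset$, so $L(w)\notin\mathrm{span}(L(u))$ and $L$ has rank $2$; the affine line it induces is $L(w)+\mathrm{span}(L(u))=A$, which is disjoint from $\mathfrak{H}$, so by the criterion above each of these $q(q-1)$ maps yields a normalized permutation polynomial of the required shape. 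Distinct pairs $(L(u),L(w))$ give distinct $L$, so there are exactly $q(q-1)$ of them; one also checks that the induced line, and hence this correspondence, does not depend on which preimage $w$ was chosen, since replacing $w$ by $w+cu$ only moves $L(w)$ within $A$. The remaining computations are routine; the one genuinely conceptual point — and the main obstacle — is recognizing $\mathfrak{H}$ (rather than a divided-difference set with a fixed sign, or the raw image of $g(s)$ used in \tref{thm:coprime}) as the invariant governing the problem. Once that is in hand, the argument is a direct generalization of \tref{thm:lowerbound} and \tref{thm:Fp_to_Fpsquare}.
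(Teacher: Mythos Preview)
Your argument is correct and follows essentially the same route as the paper's proof: fix a basis $\{u,w\}$ with $u\in\ker s$ and $s(w)=v$, write $f(\alpha u+\beta w)=g(\beta v)+\alpha L(u)+\beta L(w)$, reduce the collision analysis to the criterion $\mathfrak{H}\cap\bigl(L(w)+\mathrm{span}\,L(u)\bigr)=\emptyset$, and then build the $q(q-1)$ maps backwards from a given nontrivial affine line disjoint from $\mathfrak{H}$. The only cosmetic difference is in the counting: the paper fixes $L(v_1)=b$ and varies the preimage $v_1$ of $v$ to obtain $q$ maps, whereas you fix $w$ and let $L(w)$ range over the $q$ points of $b+\mathcal{L}$; as you note, these parametrize the same set of $L$'s.
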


\begin{proof}
Let $f = g(s) + L(x)$ be a PP of $\mathbb{F}_{q^2}$. Let $\{u,v_1\}$ be a basis of $\mathbb{F}_{q^2}$ such that $u \in \ker s$ and $s(v_1) = v$, where $v \not = 0$ was used to generate $\mathfrak{H}$ as given in the statement of the theorem. Let $\mathcal{L}$ be the one dimensional subspace of $\mathbb{F}_{q^2}$ spanned by $L(u)$. Consider any $w \in \mathbb{F}_{q^2}$ of the form $w = v_1 + \alpha u$ where $\alpha \in \mathbb{F}_q$. Consider two elements $\beta_1 w, \beta_2 w$ in the subspace spanned by $w$. \[ f(\beta_i w) = g(s(\beta_i w)) + L(\beta_i w) = g(\beta_i s(w)) + \beta_i L(w) = g(\beta_i v) + \beta_i L(w) \]
for $i = 1,2$. As $f$ is a PP, $f(\beta_1w) \not = f(\beta_2w)$. Using the expansion above, this inequality implies that $L(w) \not = \frac{g(\beta_1 v) - g(\beta_2 v)}{\beta_2 - \beta_1} \in \mathfrak{H}$. Thus we can conclude that the affine space defined by all $L(w) = L(v_1 + \alpha u)$ which is given as $L(v_1) + \mathcal{L}$ does not intersect the set $\mathfrak{H}$.

Conversely, assume that some nontrivial affine space $b + \mathcal{L}$ satisfies the property that $\mathfrak{H} \cap (b + \mathcal{L}) = \emptyset$. By nontrivial affine set, we mean that the set is not a subspace, that is, $(b + \mathcal{L}) \not = \mathcal{L}$. A linear map $L(x)$ is completely  defined by defining where a basis of $\mathbb{F}_{q^2}$ maps to. Taking the basis $\{u,v_1\}$ with $u \in \ker s$ and $s(v_1) = v$ as before, we define $0 \not = L(u) \in \mathcal{L}$ and $L(v_1) = b$. Then we claim that the polynomial $f = g(s) + L(x)$ is a permutation polynomial. 

If $w_i = v_1 + \alpha_i u$, then we need to show that $f(\beta_1 w_1) \not = f(\beta_2 w_2)$. If $\beta_1 = \beta_2$ and $w_1 \not = w_2$, then $f(\beta_1w_1) - f(\beta_1w_2) = L(\beta_1 w_1) - L(\beta_1 w_2) = \beta_1(\alpha_1 - \alpha_2)L(u) \not = 0$ as $\alpha_1 \not = \alpha_2$ and $L(u) \not = 0$. For the case $\beta_1 \not = \beta_2$, let us suppose $f(\beta_1 w_1) = f(\beta_2 w_2)$. Then rearranging the terms, we get \begin{eqnarray*} g(\beta_1 v) - g(\beta_2 v) & = & \beta_2 L(w_2) - \beta_1 L(w_1)  \\ & = & L((\beta_2 - \beta_1)v_1 + (\beta_2\alpha_2 - \beta_1\alpha_1)u)  \\ & = & (\beta_2 - \beta_1)L(v_1 + \frac{\beta_2\alpha_2 - \beta_1\alpha_1}{\beta_2 - \beta_1}u) \\ \Rightarrow \frac{g(\beta_1v) - g(\beta_2v)}{\beta_2 - \beta_1} & = & L(w_j) \,\,\,\,\, \mbox{\rm where }\,\, L(w_j) \in (b + \mathcal{L}) \end{eqnarray*} The above equality contradicts the fact that $\mathfrak{H} \cap (b + \mathcal{L}) = \emptyset$ and therefore $f$ is a PP. Observe that there are $q-1$ choices for $L(u) \in \mathcal{L}$. Also observe that there are $q$ distinct choices for $v_1$, such that $s(v_1) = v$. Thus we can obtain $q(q-1)$ distinct rank $2$ linearized polynomials $L(x)$ all of which form PP of the form $f = g(s) + L(x)$.
\end{proof}

Interestingly, while the most general conditions that the $s$-polynomial should satisfy to combine with rank $1$ linearized polynomials to form a PP  (\tref{thm:allrnk1}) depends on a subspace condition, the most general conditions that the $s$-polynomials should satisfy to combine with rank $2$ linearized polynomials to form a PP (\tref{thm:allrnk2}) hinges on a affine space condition.  Observe that \tref{thm:allrnk2} is a generalization of all the results in \tref{thm:lowerbound}, \cref{cor:lowerbound}, \tref{thm:Fp_to_Fpsquare}, \tref{thm:pp_from_basefield_poly}, \cref{cor:pp_from_base} and \tref{thm:from_perm}. For example, in \tref{thm:pp_from_basefield_poly}, as the image of the $s$-polynomial lies within $\mathbb{F}_q$, therefore in this case, the set $\mathfrak{H} \subset \mathbb{F}_q$. This ensures that for all nontrivial affine sets associated to $\mathbb{F}_q$, that is, $b + \mathbb{F}_q$, we have $\mathfrak{H} \cap (b + \mathbb{F}_q) = \emptyset$. Since there are $(q-1)$ distinct affine sets of this kind, there are a total of $q(q-1)^2$ linearized polynomials $L(x)$ that can combine with a given $s$-polynomial in \tref{thm:pp_from_basefield_poly} to form PPs of $\mathbb{F}_{q^2}$. 
\section{Conclusion}
This paper addresses the construction and analysis of permutation polynomials of the form $g(s) + L(x)$ over $\mathbb{F}_{q^2}$ where $s, L(x)$ are linearized polynomials and $g(x) \in \mathbb{F}_{q^2}[x]$. We consider $s$ to be a rank $1$ linearized polynomial over $\mathbb{F}_{q^2}$ and therefore of the form $(x^q+\delta_i x)$, where $\delta_i$ is a $(q+1)$-th root of unity. In this paper we gave conditions for polynomials of the form $f(x)=g(s)+L(x)$ to be normalized PPs of $\mathbb{F}_{q^2}$; where $L(x)$ is also a linearized polynomial of rank $1$ or $2$. 
We have divided these normalized PPs into subclasses based on some conditions and enumerated the number of PPs that arise in each of these subclasses. We have also demonstrated how to construct compositional inverses for most of these subclasses. All the arguments used in this paper are based on linear algebra. The class of PPs that eventually fall into the ambit of this paper include as subsets a large number of classes of PPs that have been studied in literature. It is our belief that such a linear algebraic approach is amenable to generalizations applicable to more general fields $\mathbb{F}_{q^n}$.
\bibliographystyle{elsarticle-harv} 
\bibliography{MK_HP_2}
\end{document}